\makeatletter \@addtoreset{equation}{section} \makeatother
\numberwithin{equation}{section}
\newtheorem{theorem}{Theorem}[section]
\newtheorem{definition}[theorem]{Definition}
\newtheorem{proposition}[theorem]{Proposition}
\newtheorem{corollary}[theorem]{Corollary}
\newtheorem{remark}[theorem]{Remark}
\numberwithin{equation}{section}
\begin{document}

\title[Nonlocal Sobolev inequality]
{Quantitative stability of a nonlocal Sobolev inequality}

\author[P. Piccione]{Paolo Piccione}
\address{\noindent Paolo Piccione  \newline
Instituto de Matematica e Estatistica,
Universidade de Sao Paulo, Sao Paulo, SP, Brazil}\email{piccione@ime.usp.br}

\author[M. Yang]{Minbo Yang}
\address{\noindent Minbo Yang  \newline
School of Mathematics, Zhejiang Normal University,
Jinhua 321004, Zhejiang, People's Republic of China.}\email{mbyang@zjnu.edu.cn}

\author[S. Zhao]{Shunneng Zhao}
\address{\noindent Shunneng Zhao  \newline
Department of Mathematics, Yunnan Normal University,
Kunming 650500, Yunnan, People's Republic of China.}\email{snzhao@zjnu.edu.cn}

\thanks{2020 {\em{Mathematics Subject Classification.}} Primary 35A23, 26D10;  Secondly 35B35, 35J20.}

\thanks{{\em{Key words and phrases.}} Nonlocal Soblev inequality; Profile decompositions; Quantitative stability.}

\allowdisplaybreaks

\begin{abstract}
{\small In this paper, we study the quantitative stability of the nonlocal Soblev inequality
	\begin{equation*}
		S_{HL}\left(\int_{\mathbb{R}^N}\big(|x|^{-\mu} \ast |u|^{2_{\mu}^{\ast}}\big)|u|^{2_{\mu}^{\ast}} dx\right)^{\frac{1}{2_{\mu}^{\ast}}}\leq\int_{\mathbb{R}^N}|\nabla u|^2 dx , \quad \forall~u\in \mathcal{D}^{1,2}(\mathbb{R}^N),
	\end{equation*}
where $2_{\mu}^{\ast}=\frac{2N-\mu}{N-2}$ and $S_{HL}$ is a positive constant depending only on $N$ and $\mu$. For $N\geq3$, and $0<\mu<N$, it is well-known that, up to translation and scaling,
	the nonlocal Soblev inequality has a unique extremal function $W[\xi,\lambda]$ which is positive
	and radially symmetric.
	We first prove a result of quantitative stability of the nonlocal Soblev inequality with the level of gradients. Secondly, we also establish
	the stability of profile decomposition to the Euler-Lagrange equation of the above inequality for nonnegative functions. Finally we study the stability of the nonlocal Soblev inequality
	\begin{equation*}
		\Big\|\nabla u-\sum_{i=1}^{\kappa}\nabla W[\xi_i,\lambda_i]\Big\|_{L^2}\leq C\Big\|\Delta u+\left(\frac{1}{|x|^{\mu}}\ast |u|^{2_{\mu}^{\ast}}\right)|u|^{2_{\mu}^{\ast}-2}u\Big\|_{(\mathcal{D}^{1,2}(\mathbb{R}^N))^{-1}}
	\end{equation*}
	with the parameter region $\kappa\geq2$, $3\leq N<6-\mu$,  $\mu\in(0,N)$ satisfying $0<\mu\leq4$, or dimension $N\geq3$ and $\kappa=1$, $\mu\in(0,N)$ satisfying $0<\mu\leq4$.

  }

\end{abstract}

\vspace{3mm}

\maketitle

\section{Introduction and Main results}\label{sectid}
\subsection{The classical Sobolev inequality}
The Sobolev inequality with exponent $2$ states that, for any $N\geq 3$, there exists a dimensional constant $S=S(N)>0$ such that
\begin{equation}\label{bsic}
\|\nabla u\|^2_{L^2}\geq S\|u\|^2_{L^{2^*}},\quad  \forall~u\in \mathcal{D}^{1,2}(\mathbb{R}^N),
\end{equation}
where $2^*=2N/(N-2)$, and $\mathcal{D}^{1,2}(\mathbb{R}^N)$ denotes the closure of $C^\infty_c(\mathbb{R}^N)$ with respect to the norm $\|u\|_{\mathcal{D}^{1,2}(\mathbb{R}^N)}=\|\nabla u\|_{L^2}$. It is well known that the Euler-Lagrange equation associated to (\ref{bsic}) is given by
\begin{equation}\label{bec}
\Delta u+|u|^{2^*-2}u=0\quad \mbox{in}\ \ \mathbb{R}^N.
\end{equation}
The best constant $S$ in the Sobolev inequality is achieved by the Aubin-Talenti bubbles \cite{Ta76} $u=U[\xi,\lambda](x)$ defined by
\[U[\xi,\lambda](x)=[N(N-2)]^{\frac{N-2}{4}}\Big(\frac{\lambda}{1+\lambda^2|x-\xi|^2}\Big)^{\frac{N-2}{2}},\hspace{4mm}\lambda\in\mathbb{R}^{+},\hspace{4mm}\xi\in\mathbb{R}^N.\]
In fact, Caffarelli et al. \cite{CGS89} and Gidas et al. \cite{GNN79} proved that all the positive solutions to equation (\ref{bec}) are the Aubin-Talenti bubbles. In other words, the smooth  manifold of extremal function in \eqref{bsic}
$$\mathcal{M}:=\big\{cU[\xi,\lambda]:~c\in\mathbb{R},~\xi\in\mathbb{R}^N,~\lambda>0\big\}$$
is all nonnegative solution to equation \eqref{bec}. Later on, in \cite{BrL85}, Brezis and Lieb raised the question of stability for Sobolev inequality, whether
a remainder term proportional to the quadratic distance of the function $u$ to be the manifold $\mathcal{M}$ - can be added to the right hand side of (\ref{bsic}).
 This question was first answered by Bianchi and Egnell \cite{BE91} (see also \cite{AGP99}), that is,
 \begin{equation*}
 \inf\limits_{c\in\mathbb{R}, \xi\in\mathbb{R}^N,\lambda\in\mathbb{R}^{+}}\Big\|\nabla\big(u-cU[\xi,\lambda]\big)\Big\|_{L^2}^2\leq C(N)\Big(\big\|\nabla u\big\|_{L^2}^2-S^2\big\|u\big\|_{L^{2^\ast}}^2\Big).
 \end{equation*}

A natural and more challenging problem is to consider the Euler-Lagrange equation \eqref{bec}. Informally, whether a function $u$ almost solves \eqref{bec} must be quantitatively close to Aubin-Talenti bubbles.
In fact, a seminal work of Struwe \cite{Struwe-1984} proved the
well-known stability of profile decompositions of \eqref{bec}, that is $u$ sufficiently close to a sum of weakly-interacting bubbles even if we restrict to nonnegative functions. Starting with \cite{Struwe-1984}, another approach has been developed for studying the stability of the critical of functional inequalities in $\mathcal{D}^{1,2}(\mathbb{R}^N)$, see \cite{Aryan, CFM18, FG20,WW22, DSW21}.

Here we need to point out that Ciraolo et al. \cite{CFM18} obtained the first quantitative stability estimate of equation \eqref{bec} in the case where only one bubble and $N\geq3$. Later, Figalli and Glaudo \cite{FG20} proved that stability of Sobolev inequality in critical point setting for any finite number of bubbles. However, Figalli and Glaudo constructed counter-examples showing that when $N\geq6$ and bubbles $\kappa\geq2$, the $\mathcal{D}^{1,2}(\mathbb{R}^N)$-distance of $u$ from the manifold of
sums of Aubin-Talenti bubbles is much greater than the right side of \eqref{bec}, that is,
\begin{equation*}
 \inf\limits_{\substack{(z_{1},\cdots,z_{\kappa})\in\mathbb{R}^N,\\ \lambda_{1}, \cdots,\lambda_{\kappa}>0}}\Big\|\nabla\big(u-\sum_{i=1}^{\kappa}W[\xi_i,\lambda_i]\big)\Big\|_{L^2}^2\gg \Big\|\Delta u+|u|^{2^*-2}u\Big\|_{(\mathcal{D}^{1,2}(\mathbb{R}^N))^{-1}},
 \end{equation*}
and they also propose some conjectures in higher dimension $N\geq6$.  Recently, Deng et al. proved in \cite{DSW21} sharp quantitative estimates of Struwe's decomposition for Sobolev inequality (\ref{bsic}) by using the finite-dimensional reduction method and  it also completely solves the remaining dimension $N\geq6$.
\subsection{ The Hardy-Littlewood-Sobolev inequality}
As is well known, for a family of functional inequalities called Hardy-Littlewood-Sobolev inequality, which is in a certain sense dual to the family of Sobolev inequalities considered here. While we have consistently used
the latter formulation, it is worthwhile to explain this connection.

  The classical Hardy-Littlewood-Sobolev (HLS) inequality was introduced by Hardy and Littlewood \cite{Hardy-Littlewood-1} on $\mathbb{R}$, and generalised by Sobolev \cite{Sobolev-1} to $\mathbb{R}^N$. By rearrangement and symmetrisation, optimality is reduced to small classical of functions, for instance, to radial functions. This strategy was used by Lieb in \cite{Lieb83} to prove the existence of the extremal function to the HLS inequality with sharp constant, and computed the best constant, which can be stated as follows.
    \begin{proposition}\label{prohlsi}
     Let $\mu\in(0,N)$, $1<r,t<\infty$ and $\frac{1}{r}+\frac{1}{t}+\frac{\mu}{N}=2$. The following inequality holds for all $f\in L^r(\mathbb{R}^N)$ and $g\in L^t(\mathbb{R}^N)$,
    \begin{equation}\label{hlsi}
    \int_{\mathbb{R}^N}\int_{\mathbb{R}^N}f(x)|x-y|^{-\mu} g(y)dxdy\leq C(N,r,t,\mu)\|f\|_{L^r}\|g\|_{L^t}.
    \end{equation}
Moreover, $t=r=\frac{2N}{2N-\mu}$, the best constant is the following form
    \begin{equation}\label{cnu}
    C(N,r,t,\mu)=C(N,\mu)=\frac{\Gamma((N-\mu)/2)\pi^{\mu/2}}{\Gamma(N-\mu/2)}\left(\frac{\Gamma(N)}{\Gamma(N/2)}\right)^{1-\frac{\mu}{N}},
    \end{equation}
   and the equality holds if and only if
    \begin{equation*}
   f(x)=cg(x)=a\Big(\frac{1}{1+\lambda^2|x-x_0|^2}\Big)^{\frac{2N-\mu}{2}}
    \end{equation*}
    for some $a\in \mathbb{C}$, $\lambda\in \mathbb{R}\backslash\{0\}$ and $x_0\in \mathbb{R}^N$.
    \end{proposition}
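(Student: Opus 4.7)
The plan is to follow Lieb's proof \cite{Lieb83}, which proceeds in four steps.

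\emph{Step 1 (non-sharp bound).} The inequality \eqref{hlsi} without sharp constant follows from the observation that $|x|^{-\mu}\in L^{N/\mu,\infty}(\mathbb{R}^N)$ together with the weak Young (O'Neil) inequality; the balance condition $\frac{1}{r}+\frac{1}{t}+\frac{\mu}{N}=2$ is forced by scaling. This already yields existence of some finite $C(N,r,t,\mu)$.

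\emph{Step 2 (reduction by rearrangement).} Specializing to the conformal exponent $r=t=\tfrac{2N}{2N-\mu}$, I would invoke the Riesz rearrangement inequality applied to the three-fold integral, using that the kernel $|x-y|^{-\mu}$ is already radial and decreasing. Replacing $f,g$ by their symmetric decreasing rearrangements preserves the $L^r$- and $L^t$-norms while not decreasing the integral, so the problem reduces to radial nonincreasing competitors. A standard equality-case / scaling argument further shows that at the optimum $f$ and $g$ are proportional, reducing the maximization to a single function.

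\emph{Step 3 (existence and classification via conformal invariance).} This is the crux. Via stereographic projection $\mathbb{R}^N\to S^N$, the conformal weights exactly absorb the Jacobian in the $L^{2N/(2N-\mu)}$ norm, and the problem becomes that of maximizing
\[
F(h)=\frac{\iint_{S^N\times S^N} h(\xi)\,|\xi-\eta|^{-\mu}\,h(\eta)\,d\sigma(\xi)\,d\sigma(\eta)}{\|h\|_{L^{2N/(2N-\mu)}(S^N)}^{2}},
\]
a functional invariant under the full M\"obius group $\mathrm{Conf}(S^N)$. The failure of compactness of maximizing sequences on $\mathbb{R}^N$ (concentration, vanishing, or translation to infinity) corresponds precisely to escape along the non-compact directions of $\mathrm{Conf}(S^N)$, and can be neutralized by applying suitable M\"obius transformations to recenter and rescale the sequence. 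Extracting a strongly convergent subsequence then yields a positive extremizer $h_{0}>0$ on $S^N$. Any such extremizer satisfies the associated Euler-Lagrange integral equation, and the method of moving planes on $S^N$ forces $h_0$ to be constant; pulling back under stereographic projection produces exactly the family $a\,(1+\lambda^2|x-x_0|^2)^{-(2N-\mu)/2}$.

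\emph{Step 4 (sharp constant).} Substituting the extremizer into the quotient and exploiting $O(N+1)$-invariance on $S^N$ to reduce the remaining integral to a one-dimensional Beta/Gamma computation yields the explicit value \eqref{cnu}. The main obstacle is Step 3: scaling and translation invariance of the Euclidean problem prevent any direct application of concentration-compactness on $\mathbb{R}^N$, and it is precisely Lieb's transport to $S^N$ together with the transitive action of the conformal group that simultaneously recovers compactness and pins down the extremizers up to the symmetries listed.
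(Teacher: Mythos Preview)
The paper does not supply its own proof of this proposition: it is stated as background and attributed to Lieb \cite{Lieb83}, with no argument given beyond the sentence ``This strategy was used by Lieb in \cite{Lieb83} to prove the existence of the extremal function \ldots''. So there is nothing in the paper to compare your proposal against.

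That said, your sketch is a plausible route to the result, but it is not quite Lieb's original proof. Lieb did not use concentration-compactness plus moving planes on $S^N$; instead he exploited the \emph{competing symmetries} structure: the functional is invariant under both symmetric decreasing rearrangement and conformal inversion, and iterating these two noncommuting operations drives any maximizing sequence to a common fixed point, which is forced to be the stated family $a(1+\lambda^2|x-x_0|^2)^{-(2N-\mu)/2}$. Existence of a maximizer then follows without a separate compactness argument, and the classification of extremizers drops out simultaneously rather than via a subsequent Euler--Lagrange/moving-planes step. Your Step~3, as written, is closer to the later Lions/Chen--Li--Ou line of argument; it is correct in outline but heavier than what Lieb actually did, and your assertion that ``the method of moving planes on $S^N$ forces $h_0$ to be constant'' would need a precise reference or a self-contained argument, since the Euler--Lagrange equation here is an integral equation rather than a PDE. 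Steps~1, 2 and 4 are fine as stated.
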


\begin{remark}
The family of HLS inequalities is a two-parameter
family of inequalities, depending on parameters $\mu\in(0,N)$ and $1<r<\infty$, and inequality \eqref{hlsi} is equivalent to the following Riesz potential estimate for all $f\in L^r(\mathbb{R}^N)$,
\begin{equation}\label{hls-1}
\Big\|f\ast|x|^{-\mu}\Big\|_{L^{t}}\leq C(N,r,t,\mu)\big\|f\big\|_{L^r},\quad\text{with}\quad \frac{1}{r}+\frac{\mu}{N}=1+\frac{1}{t}.
\end{equation}
\end{remark}
In fact, the classical Sobolev inequality can be deduced from the HLS inequality.
According to the theory of differentiation, for any $N$-dimensional unit vector $\omega$,
$$
f(x)=-\int_{0}^{\infty}\frac{\partial}{\partial r}f(x+\omega r)dr,\quad\text{for any}~~ f\in\mathcal{C}_{c}^{\infty}(\mathbb{R}^N).
$$
Integrating on the unit sphere $\mathbb{S}^{N-1}$ yields
$$f(x)=\frac{1}{\omega_{N-1}}\int_{\mathbb{R}^N}\frac{(x-y)\cdot\nabla f(y)}{|x-y|^N}dy,$$
and so
$$|f(x)|=\frac{1}{\omega_{N-1}}\int_{\mathbb{R}^N}\frac{|\nabla f(y)|}{|x-y|^{N-1}}dy,\quad i.e.,\quad |f(x)|\leq C(N)\big||x|^{1-N}\ast|\nabla f|\big|.$$
As a consequence, the HLS inequality \eqref{hls-1} tells us that the classical Sobolev inequality holds
\begin{equation}\label{bsic-1}
\big\|f\big\|_{L^{\frac{Np}{N-p}}}\leq \mathcal{S}(p,N)\big\|\nabla f\big\|_{L^p},\quad\text{for all}~~1<p<N,\quad  \forall~u\in {D}^{1,p}(\mathbb{R}^N).
\end{equation}
On the other hand, the relation between the Sobolev inequalities and the HLS inequality come from \cite{Lieb-Loss} the well-known fact
\begin{equation*}
 \Big\|(-\Delta)^{-\frac{s}{2}}f\Big\|_{L^2}^2=\frac{1}{\pi^{\frac{N}{2}-2s}}\cdot\frac{\Gamma(N-2s)}{\Gamma(2s)}\int_{\mathbb{R}^N}\int_{\mathbb{R}^N}f(x)|x-y|^{-(N-2s)} g(y)dxdy,
\end{equation*}
for any $s\in(0,\frac{N}{2})$ and $f\in L^r(\mathbb{R}^N)$.
 In view of duality, there exists $\mathcal{S}(N,s)>0$ such that the fractional Sobolev inequality  was stated as,
\begin{equation}\label{fractional}
\Big\|(-\Delta)^{\frac{s}{2}}f\Big\|_{L^2}^2\geq\mathcal{S}(N,s)\big\|f\big\|_{L^{2N/(N-2s)}}^2\quad \text{for all}\quad f\in\dot{H}^{s}(\mathbb{R}^N),
\end{equation}
in the equivalent form
\begin{equation*}
\Big\|(-\Delta)^{-\frac{s}{2}}g\Big\|_{L^\frac{2N}{N-2s}}\leq\mathcal{S}^{-\frac{1}{2}}(N,s)\big\|g\big\|_{L^{2}}\quad \text{for all}\quad g\in L^{2}(\mathbb{R}^N).
\end{equation*}
Inequality \eqref{fractional} is valid for function $f$ in homogeneous Sobolev space $\dot{H}^{s}(\mathbb{R}^N)$ of tempered distributions whose Fourier transform
$$\hat{f}\in L^1_{loc}(\mathbb{R}^N)\quad\text{and}\quad\Big\|(-\Delta)^{\frac{s}{2}}f\Big\|_{L^2}^2=
\int_{\mathbb{R}^N}|\xi|^{2s}|\hat{f}|^2<\infty.
$$
In particular, by duality it is straight forward to check that inequality (\ref{hlsi}) corresponding to Sobolev inequality (\ref{bsic}) is given by, for $\mu=N-2$ and $r=t$,
\begin{equation}\label{fractional-0}
\int_{\mathbb{R}^N}f(-\Delta)^{-1}f\leq\frac{1}{\pi N(N-2)}\Big(\frac{\Gamma(N)}{\Gamma(N/2)}\Big)^{\frac{2}{N}}\big\|f\big\|_{L^{2N/(N+2)}}^2.
\end{equation}
 Therefore, it is natural to ask whether a remainder term can be added to the left hand side of inequality (\ref{hlsi}), (\ref{fractional}) or (\ref{fractional-0}). Successfully, Chen, Frank and Weth in \cite{CFW13} gave the affirmative answer regarding the stability of (\ref{fractional}), that is,
   \begin{equation}\label{chen-F}
   \big\|(-\Delta)^{\frac{s}{2}}f\big\|_{L^2}^{2}-\mathcal{S}(N,s)\big\|f\big\|_{L^t}^2\geq
   K_{N,S}\inf\limits_{g\in\mathcal{M}_{N,S}}\big\|(-\Delta)^{\frac{s}{2}}(f-g)\big\|_{L^2}^2\quad\text{for all}~~s\in(0,N/2),
   \end{equation}
where $\mathcal{M}_{N,S}$ is the manifold of all
optimal functions, which is generated from $v(x)=(1+x^2)^{(2s-N)/2}$ by multiplication by a
constant, translations and scalings.
For HLS inequality (\ref{hlsi}) with the special case $t=r=\frac{2N}{2N-\mu}$. Carlen \cite{Ca17} proved for all $u\in L^{\frac{2N}{2N-\mu}}(\mathbb{R}^N)$,
    \begin{equation*}
    C(N,\mu)\big\|f\big\|^2_{L^{\frac{2N}{2N-\mu}}}-\int_{\mathbb{R}^N}\int_{\mathbb{R}^N}\frac{f(x)f(y)}{|x-y|^\mu}dydx
    \geq K_{HL,\mu}\inf_{g\in \mathcal{M}_{HL,\mu}}\big\|f-g\big\|^2_{L^{\frac{2N}{2N-\mu}}},
    \end{equation*}
    where constant $K_{HLS,\mu}>0$ depending only on $N$ and $\mu$, $\mathcal{M}_{HL,\mu}$ is the manifold of all
optimal functions, which is generated from $v(x)=(1+x^2)^{(\mu-2N)/2}$ by multiplication by a
constant, translations and scalings. As a straightforward consequence of the duality approach,  an explicit stability results for HLS inequality are obtained in \cite{Dolbeault-1,Dolbeault-2} by using flow methods.
Recently, Dolbeault and Esteban \cite{DE22} first established a constructive local stability result of (\ref{fractional-0}) in a neighbourhood of the optimal functions, with respect to very strong topology of relative uniform convergence.

\subsection{ The nonlocal Sobolev inequality}
Let $N\geq3$ and $\mu\in(0,N)$. We define the Coulomb space $X_{NL}^{(q)}$ as the vector space of measurable functions $u:\mathbb{R}^N\rightarrow\mathbb{R}$ such that
\begin{equation*}
\|u\|_{X_{NL}^{(q)}}=\Big(\int_{\mathbb{R}^N}\big(\frac{1}{|x|^{\mu}} \ast u^{q}\big)u^{q} dx\Big)^{\frac{1}{2q}}<\infty.
\end{equation*}
 It can be observed that for every measurable function $u\in X_{NL}^{(q)}$ if and only if $|u|^{q}\in X_{NL}^{(1)}$. By the HLS inequality we have
$$
\int_{\mathbb{R}^N}\Big(\frac{1}{|x|^{\mu}} \ast u^{q}\Big)u^{q}\leq C\Big(\int_{\mathbb{R}^N}|u|^{\frac{2Nq}{2N-\mu}}\Big)^{\frac{2N-\mu}{N}}
$$
and thus $L^{\frac{2Nq}{2N-\mu}}(\mathbb{R}^N)\subset X_{NL}^{(q)}$. It is not difficult to see that the Coulomb space  $X_{NL}^{(q)}$ is a Banach space under the norm $
\|\cdot\|_{X_{NL}^{(q)}}$. See \cite{GY18, Mercuri}. By the HLS inequality, the integral
    \begin{equation*}
    \int_{\mathbb{R}^N}\int_{\mathbb{R}^N}\frac{|u(x)|^q|v(y)|^q}{|x-y|^\mu}dydx
    \end{equation*}
    is well-defined if for $u\in L^{qt}(\mathbb{R}^N)$ satisfying
  $\frac{2}{t}+\frac{\mu}{N}=2$.
     Hence, for $u\in H^1(\mathbb{R}^N)$, the  continuous Sobolev embedding
    $H^1(\mathbb{R}^N)\hookrightarrow L^\gamma(\mathbb{R}^N)$ for every $\gamma\in \left[2,\frac{2N}{N-2}\right]$
    implies that
    \begin{equation*}
    \frac{2N-\mu}{N}\leq q\leq \frac{2N-\mu}{N-2}.
    \end{equation*}
    Hence we may call $\frac{2N-\mu}{N}$ the low critical exponent and $\frac{2N-\mu}{N-2}=:2_{\mu}^{\ast}$ the upper critical exponent due to the HLS inequality. Consider the upper critical case, we are concerned with the following nonlocal Sobolev inequality
\begin{equation}\label{Prm}
\int_{\mathbb{R}^N}|\nabla u|^2 dx
\geq S_{HL}\left(\int_{\mathbb{R}^N}(|x|^{-\mu} \ast|u|^{2^*_\mu})|u|^{2^*_\mu} dx\right)^{\frac{1}{2^*_\mu}}, \quad u\in \mathcal{D}^{1,2}(\mathbb{R}^N),
\end{equation}
for some positive constant $S_{HL}$ depending only on $N$ and $\mu$, where $N\geq 3$, $0<\mu<N$.
It is well-know that the optimal constant in \eqref{Prm} is given by
    \begin{equation*}
S_{HL}=S\left[\frac{\Gamma((N-\mu)/2)\pi^{\mu/2}}{\Gamma(N-\mu/2)}\left(\frac{\Gamma(N)}{\Gamma(N/2)}\right)^{1-\frac{\mu}{N}}\right]^{(2-N)/(2N-\mu\mu)},
    \end{equation*}
where $S$ is the best Sobolev constant. What's more, the equality is achieved in \eqref{Prm} if and only if by
    \begin{equation}\label{defU}
    W[\xi,\lambda](x)=S^{\frac{(N-\mu)(2-N)}{4(N-\mu+2)}}[C(N,\mu)]^{\frac{2-N}{2(N-\mu+2)}}[N(N-2)]^{\frac{N-2}{4}}\Big(\frac{\lambda}{1+\lambda^2|x-\xi|^2}\Big)^{\frac{N-2}{2}},\hspace{1mm}\lambda\in\mathbb{R}^{+},\hspace{1mm}\xi\in\mathbb{R}^N,
    \end{equation}
    which satisfies the Euler-Lagrange equation of (\ref{Prm})
    \begin{equation}\label{ele}
    \Delta u+(|x|^{-\mu}\ast |u|^{2^*_\mu})|u|^{2^*_\mu-2}u=0 \quad \mbox{in}\quad \mathbb{R}^N.
    \end{equation}
See \cite{GY18, DY19, GHPS19,Le18}.

As far as we know there is few results about the stability of the nonlocal Sobolev inequality and it is interesting to study the quantitative stability estimate of the nonlocal Sobolev inequality \eqref{Prm}. Very recently, Deng et al. \cite{DSB213} gave a first result that the gradient type remainder term of inequality \eqref{Prm}, that is,
 \[
    B_2{\rm dist}(u,\mathcal{\widetilde{M}})^2
    \geq \int_{\mathbb{R}^N}|\nabla u|^2 dx
    -S_{HL}\left(\int_{\mathbb{R}^N}\big(|x|^{-\mu} \ast u^{2_{\mu}^*}\big)u^{2_{\mu}^*} dx\right)^{\frac{1}{{2_{\mu}^*}}}
    \geq B_1 {\rm dist}(u,\mathcal{\widetilde{M}})^2,
    \]
    where
    $$
    \mathcal{\widetilde{M}}=\big\{cW[\xi,\lambda]:~c\in\mathbb{R},~\xi\in\mathbb{R}^N,~\lambda>0\big\}
    $$
    is $N+2$-dimensional manifold, and ${\rm dist}(u,\mathcal{\widetilde{M} }):=\inf\limits_{c\in\mathbb{R}, \lambda>0, z\in\mathbb{R}^N}\|u-cW[\xi,\lambda]\|_{\mathcal{D}^{1,2}(\mathbb{R}^N)}$.

We first establish the following quantitative estimate for \eqref{Prm} at the level of gradients.
\begin{theorem}\label{thme1}
Fix $N\geq3$ and $\mu\in(0,N)$. There exist constants $K, L>0$, depending only on $N$ and $\mu$, such that for all $u\in \mathcal{D}^{1,2}(\mathbb{R}^N)$ and for any $v\in \mathcal{\widetilde{M}}$ with $\|u\|_{NL}=\|v\|_{NL}$,
\begin{equation*}
\left\|\nabla u-\nabla v\right\|_{L^{2}}^2\leq K(N,\mu)\widetilde{\delta}(u)+L(N,\mu)\left\| u\right\|_{NL}\left\| u-v\right\|_{NL}.
\end{equation*}
Here~$\|\cdot\|_{NL}:=\|\cdot\|_{X_{NL}^{(2_{\mu}^{\ast})}}$ for simplicity, $\widetilde{\delta}(u):=\int_{\mathbb{R}^N}|\nabla u|^2 dx
-S_{HL}\left(\int_{\mathbb{R}^N}(|x|^{-\mu} \ast|u|^{2^*_\mu})|u|^{2^*_\mu} dx\right)^{\frac{1}{2^*_\mu}}$ and
$$
S_{HL}=S\left[\frac{\Gamma((N-\mu)/2)\pi^{\mu/2}}{\Gamma(N-\mu/2)}\left(\frac{\Gamma(N)}{\Gamma(N/2)}\right)^{1-\frac{\mu}{N}}\right]^{(2-N)/(2N-\mu\mu)}.
$$
\end{theorem}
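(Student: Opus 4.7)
The plan is to begin from the expansion
$$\|\nabla u-\nabla v\|_{L^{2}}^{2}=\|\nabla u\|_{L^{2}}^{2}+\|\nabla v\|_{L^{2}}^{2}-2\int_{\mathbb{R}^N}\nabla u\cdot\nabla v\,dx$$
and use two structural facts simultaneously: $v\in\widetilde{\mathcal M}$ saturates \eqref{Prm}, so $\|\nabla v\|_{L^{2}}^{2}=S_{HL}\|v\|_{NL}^{2}$, and the hypothesis $\|u\|_{NL}=\|v\|_{NL}$ promotes this to $\|\nabla v\|_{L^{2}}^{2}=S_{HL}\|u\|_{NL}^{2}$. Substituting the definition $\|\nabla u\|_{L^{2}}^{2}=\widetilde\delta(u)+S_{HL}\|u\|_{NL}^{2}$ collapses the identity to
$$\|\nabla u-\nabla v\|_{L^{2}}^{2}=\widetilde\delta(u)+2S_{HL}\|u\|_{NL}^{2}-2\int_{\mathbb{R}^N}\nabla u\cdot\nabla v\,dx,$$
so everything reduces to estimating the cross term. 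Because the whole statement is invariant under $(u,v)\mapsto(-u,-v)$, I may assume $v=cW[\xi,\lambda]$ with $c>0$.

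Next I would integrate by parts and invoke \eqref{ele}: for $v=cW[\xi,\lambda]$ one checks that $-\Delta v=\alpha\,v^{2_\mu^{\ast}-1}(|x|^{-\mu}\ast v^{2_\mu^{\ast}})$ with the scalar $\alpha=S_{HL}/\|v\|_{NL}^{2\cdot 2_\mu^{\ast}-2}$, fixed by testing the equation against $v$ itself. Combined with $\|v\|_{NL}^{2\cdot 2_\mu^{\ast}}=\int v^{2_\mu^{\ast}}(|x|^{-\mu}\ast v^{2_\mu^{\ast}})\,dx$ and $\|u\|_{NL}=\|v\|_{NL}$, a direct rearrangement gives
$$\|\nabla u-\nabla v\|_{L^{2}}^{2}=\widetilde\delta(u)+\frac{2S_{HL}}{\|u\|_{NL}^{2\cdot 2_\mu^{\ast}-2}}\int_{\mathbb{R}^N}(v-u)\,v^{2_\mu^{\ast}-1}(|x|^{-\mu}\ast v^{2_\mu^{\ast}})\,dx.$$
Thus the problem collapses to the single bilinear estimate
$$\left|\int_{\mathbb{R}^N}(u-v)\,v^{2_\mu^{\ast}-1}(|x|^{-\mu}\ast v^{2_\mu^{\ast}})\,dx\right|\le\|u-v\|_{NL}\,\|v\|_{NL}^{2\cdot 2_\mu^{\ast}-1},$$
which, after dividing by $\|u\|_{NL}^{2\cdot 2_\mu^{\ast}-2}=\|v\|_{NL}^{2\cdot 2_\mu^{\ast}-2}$, yields exactly $K(N,\mu)=1$ and $L(N,\mu)=2S_{HL}$.

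For this bilinear estimate, set $B(F,G):=\int\!\!\int F(x)G(y)|x-y|^{-\mu}\,dxdy$. The Fourier identity $B(F,F)=c(N,\mu)\int|\widehat F(\xi)|^{2}|\xi|^{\mu-N}\,d\xi\ge 0$ makes $B$ positive semidefinite on the Coulomb space, so Cauchy--Schwarz for $B$ gives
$$\bigl|B\bigl((u-v)v^{2_\mu^{\ast}-1},v^{2_\mu^{\ast}}\bigr)\bigr|\le B\bigl((u-v)v^{2_\mu^{\ast}-1},(u-v)v^{2_\mu^{\ast}-1}\bigr)^{1/2}\|v\|_{NL}^{2_\mu^{\ast}}.$$
The hard part will be bounding the remaining diagonal factor by $\|u-v\|_{NL}^{2}\|v\|_{NL}^{2(2_\mu^{\ast}-1)}$: the naive HLS inequality produces $\|u-v\|_{L^{2^{\ast}}}$, which dominates $\|u-v\|_{NL}$ rather than the reverse, and is therefore useless here. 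The trick I would use is to split the Riesz kernel as $|x-y|^{-\mu}=|x-y|^{-\mu/p}\,|x-y|^{-\mu/q}$ with conjugate exponents $p=2_\mu^{\ast}$ and $q=2_\mu^{\ast}/(2_\mu^{\ast}-1)$, and to apply Hölder to the double integral with the factorisation $|u-v|(x)|u-v|(y)/|x-y|^{\mu/p}$ and $v^{2_\mu^{\ast}-1}(x)v^{2_\mu^{\ast}-1}(y)/|x-y|^{\mu/q}$. With this choice, the $p$-th-power factor becomes $\int\!\!\int|u-v|^{2_\mu^{\ast}}(x)|u-v|^{2_\mu^{\ast}}(y)/|x-y|^{\mu}=\|u-v\|_{NL}^{2\cdot 2_\mu^{\ast}}$ and the $q$-th-power factor becomes $\|v\|_{NL}^{2\cdot 2_\mu^{\ast}}$; taking $p$-th and $q$-th roots gives the required bound, and feeding it back into the expansion completes the proof.
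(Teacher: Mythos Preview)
Your proof is correct, and it follows a genuinely different route from the paper's.

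The paper argues via Clarkson's inequality (really the parallelogram law in $L^2$): from
\[
\Big\|\tfrac{\nabla u-\nabla v}{2}\Big\|_{L^2}^2\le \tfrac12\|\nabla u\|_{L^2}^2+\tfrac12\|\nabla v\|_{L^2}^2-\Big\|\tfrac{\nabla u+\nabla v}{2}\Big\|_{L^2}^2,
\]
it uses optimality of $v$ to replace $\|\nabla v\|_{L^2}^2$ by $\|\nabla u\|_{L^2}^2$, applies the nonlocal Sobolev inequality to $u+v$, and then controls $\|\tfrac{u+v}{2}\|_{NL}$ from below by $\|u\|_{NL}-\|\tfrac{u-v}{2}\|_{NL}$ via the semigroup property of the Riesz potential and Minkowski's inequality. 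This yields $K=4$, $L=4S_{HL}$.

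Your argument is more direct: you expand $\|\nabla u-\nabla v\|_{L^2}^2$, insert the identities $\|\nabla v\|_{L^2}^2=S_{HL}\|u\|_{NL}^2$ and $\|\nabla u\|_{L^2}^2=\widetilde\delta(u)+S_{HL}\|u\|_{NL}^2$, and convert the cross term via the Euler--Lagrange equation satisfied by $v$. The remaining bilinear estimate is handled by Cauchy--Schwarz for the positive semidefinite Coulomb form $B$ together with the H\"older splitting $|x-y|^{-\mu}=|x-y|^{-\mu/2_\mu^\ast}\,|x-y|^{-\mu(2_\mu^\ast-1)/2_\mu^\ast}$; both exponents are valid since $2_\mu^\ast>1$. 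This gives the sharper constants $K=1$, $L=2S_{HL}$. The paper's approach is perhaps more robust in that it only uses optimality of $v$ and the triangle inequality in $\|\cdot\|_{NL}$, whereas yours exploits the explicit Euler--Lagrange structure; on the other hand, your H\"older trick in the Coulomb space is a cleaner substitute for the paper's semigroup/Minkowski detour and yields better numerical constants.
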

Inspired by the spirit of Struwe in \cite{Struwe-1984}, a nonlocal version of the stability of profile decompositions to \eqref{ele} for nonnegative functions states the following:
\begin{theorem}\label{F4}
   Let $N\geq3$ and $\kappa\geq1$ be positive integers. Let $(u_m)_{m\in\mathbb{N}}\subseteq\mathcal{D}^{1,2}(\mathbb{R}^N)$ be a sequence of nonnegative functions such that $$(\kappa-\frac{1}{2})S_{HL}^{\frac{2N-\mu}{N+2-\mu}}\leq\|u_m|_{\mathcal{D}^{1,2}(\mathbb{R}^N)}^2\leq(\kappa+\frac{1}{2})S_{HL}^{\frac{2N-\mu}{N+2-\mu}}$$ with $S_{HL}=S_{HL}(N,\mu)$ as in \eqref{Prm}, and assume that
    \[
    \Big\|\Delta u_m+\left(\frac{1}{|x|^{\mu}}\ast |u_m|^{2_{\mu}^{\ast}}\right)|u_m|^{2_{\ast}^{\ast}-2}u_m\Big\|_{(\mathcal{D}^{1,2}(\mathbb{R}^N))^{-1}}\rightarrow0\quad \mbox{as}\quad m\rightarrow\infty.
    \]
Then there exist $\kappa$-tuples of points $(\xi_{1}^{(m)},\cdots, \xi_{\kappa}^{(m)})_{m\in\mathbb{N}}$ in $\mathbb{R}^N$ and $\kappa$-tuples of positive real numbers $(\lambda_{1}^{(m)},\cdots,\lambda_{\kappa}^{(m)})_{m\in\mathbb{N}}$  such that
\[
\Big\|\nabla\Big(u_m-\sum_{i=1}^{\kappa}W[\xi_i^{(m)},\lambda_{i}^{(m)}]\Big)\Big\|_{L^2}\rightarrow0\quad \mbox{as}\quad m\rightarrow\infty.
\]
\end{theorem}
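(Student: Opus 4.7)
The plan is to argue by induction on $\kappa$, adapting Struwe's classical variational scheme to the nonlocal Choquard-type nonlinearity. The main ingredients are a concentration--compactness analysis of the Palais--Smale sequence, a nonlocal Brezis--Lieb-type lemma for the Choquard term, and the classification of nonnegative finite-energy solutions of the Euler--Lagrange equation \eqref{ele}.

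\textbf{Base step.} By the $\mathcal{D}^{1,2}$-boundedness of $(u_m)$, I extract (up to a subsequence) a weak limit $u^{(0)}\geq 0$. The assumed vanishing of the residual, combined with the local compactness of the convolution nonlinearity $(|x|^{-\mu}\ast |u|^{2^{\ast}_{\mu}})|u|^{2^{\ast}_{\mu}-2}u$ provided by the HLS inequality and the Rellich--Kondrachov theorem on bounded domains, forces $u^{(0)}$ to satisfy \eqref{ele}. By the classification of nonnegative solutions to \eqref{ele} (see \cite{DY19, Le18, GY18}), either $u^{(0)}\equiv 0$ or $u^{(0)} = W[\xi_0,\lambda_0]$ for some parameters. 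When $u^{(0)}\equiv 0$ no compactness holds at the original scale, so I run a Lévy-type concentration-function argument: there exist $\lambda_m>0$ and $\xi_m\in\mathbb{R}^N$ such that
\[
\widetilde{u}_m(x) := \lambda_m^{-(N-2)/2}\, u_m\bigl(\lambda_m^{-1} x+\xi_m\bigr)
\]
has a nontrivial weak limit. Because $\|\nabla\,\cdot\,\|_{L^2}$, $\|\cdot\|_{NL}$ and the nonlinear operator in \eqref{ele} are all invariant under this rescaling, $\widetilde{u}_m$ is again a nonnegative Palais--Smale sequence for the same functional with the same energy window, and its weak limit is therefore a bubble $W_1$.

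\textbf{Inductive step and energy accounting.} Once a first bubble $W_m^{(1)}=W[\xi_1^{(m)},\lambda_1^{(m)}]$ is identified, I set $v_m:=u_m-W_m^{(1)}$ and show that it is still an approximate critical point in the same energy window, but with norm reduced by exactly one quantum. The energy orthogonality
\[
\|\nabla u_m\|_{L^2}^2 = \|\nabla v_m\|_{L^2}^2 + \|\nabla W_m^{(1)}\|_{L^2}^2 + o(1)
\]
combined with the nonlocal Brezis--Lieb decomposition
\[
\int_{\mathbb{R}^N}\!(|x|^{-\mu}\!\ast|u_m|^{2^{\ast}_{\mu}})|u_m|^{2^{\ast}_{\mu}}\,dx = \int_{\mathbb{R}^N}\!(|x|^{-\mu}\!\ast|v_m|^{2^{\ast}_{\mu}})|v_m|^{2^{\ast}_{\mu}}\,dx + \int_{\mathbb{R}^N}\!(|x|^{-\mu}\!\ast (W_m^{(1)})^{2^{\ast}_{\mu}})(W_m^{(1)})^{2^{\ast}_{\mu}}\,dx + o(1)
\]
ensures that each extraction costs exactly $\|\nabla W\|_{L^2}^2 = S_{HL}^{(2N-\mu)/(N+2-\mu)}$. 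Iterating the base step at most $\kappa$ times produces a residual with $\mathcal{D}^{1,2}$-norm squared at most $\tfrac{1}{2}S_{HL}^{(2N-\mu)/(N+2-\mu)}$, strictly below the minimum energy of any nontrivial solution of \eqref{ele}; together with the vanishing residual this forces the remainder to converge strongly to zero. The hypothesis $u_m\geq 0$ is used at every extraction to guarantee, via the classification, that each profile is a positive bubble rather than a sign-changing solution.

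\textbf{Main obstacle.} The most delicate point is verifying that, after subtracting a bubble, the Euler--Lagrange residual for $v_m$ still tends to zero in $(\mathcal{D}^{1,2}(\mathbb{R}^N))^{-1}$, i.e., the nonlocal Taylor-type expansion
\[
(|x|^{-\mu}\ast|u_m|^{2^{\ast}_{\mu}})|u_m|^{2^{\ast}_{\mu}-2}u_m - (|x|^{-\mu}\ast (W_m^{(1)})^{2^{\ast}_{\mu}})(W_m^{(1)})^{2^{\ast}_{\mu}-1} - (|x|^{-\mu}\ast |v_m|^{2^{\ast}_{\mu}})|v_m|^{2^{\ast}_{\mu}-2}v_m \longrightarrow 0.
\]
Unlike the purely local case, expanding $|v_m+W_m^{(1)}|^{2^{\ast}_{\mu}}$ produces cross terms that are coupled through the Riesz kernel $|x|^{-\mu}$, so the pointwise algebraic expansions must be combined with sharp HLS estimates exploiting the fact that $W_m^{(1)}$ concentrates on a vanishing (or diverging) scale while $v_m$ lives, up to $o(1)$, on the complementary scale. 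Controlling these convolution cross terms is where the bulk of the technical work lies.
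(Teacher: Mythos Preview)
Your proposal is correct and follows essentially the same Struwe-type concentration--compactness scheme as the paper: calibrate a concentration function, rescale to catch a nontrivial weak limit, classify it as a bubble via nonnegativity, subtract, and iterate using a nonlocal Brezis--Lieb splitting until the remaining energy drops below the ground-state level. The paper's own proof is in fact terser than your outline---it establishes local strong convergence of the rescaled sequence via a cutoff test function and then dispatches the iteration with the single phrase ``iterating the above process $\kappa$ times''---so your explicit identification of the convolution cross-term control as the main technical point is apt and corresponds to what the paper invokes implicitly through the Brezis--Lieb lemma and HLS inequality.
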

\begin{remark}\label{remark0}
Note that,
\[
    \Big\|\Delta u_m+\left(\frac{1}{|x|^{\mu}}\ast |u_m|^{2_{\mu}^{\ast}}\right)|u_m|^{2_{\ast}^{\ast}-2}u_m\Big\|_{(\mathcal{D}^{1,2}(\mathbb{R}^N))^{-1}}\rightarrow0\quad \mbox{as}\quad m\rightarrow\infty.
    \]
    is equivalent to saying that $u_m$ is Palais-Smale sequence for the functiona $E_0$ corresponding to \eqref{ele}
    \begin{equation*}
E_0(u; \mathbb{R}^N):=\frac{1}{2}\int_{\mathbb{R}^N}|\nabla u|^2-
	\frac{1}{2\cdot2_{\mu}^{\ast}}
	\int_{\mathbb{R}^N}\int_{\mathbb{R}^N}\frac{u^{2_{\mu}^\ast
		}(x)u^{2_{\mu}^\ast}(y)}{|x-y|^{\mu}}, \quad\quad u\in H_0^1(\mathbb{R}^N).
    \end{equation*}
\end{remark}

It is much more difficult to study the stability of the nonlocal Sobolev inequality via the Euler-Lagrange equation \eqref{ele}. In this paper, we are going to study the stability of the nonlocal Sobolev equality and give a first attempt to estimate the stability of the nonlocal Sobolev inequality via the Euler-Lagrange equation \eqref{ele}.
Before stating our results, it is necessary to introduce the definition of interaction of Talenti bubbles.
\begin{definition}[\cite{FG20,DSW21}]\label{definition}
Let $W[\xi_i,\lambda_i]$ and $W[\xi_j,\lambda_j]$ be two bubbles. Define the interaction of them by
\begin{equation*}		Q(\xi_i,\xi_j,\lambda_i,\lambda_j)=\min\Big(\frac{\lambda_i}{\lambda_j}+\frac{\lambda_j}{\lambda_i}+\lambda_i\lambda_j|\xi_i-\xi_j|^2\Big)^{-\frac{N-2}{2}}.
	\end{equation*}
Let $(W[\xi_i,\lambda_i])_{1\leq i\leq \kappa}$ be a family of Talenti bubbles. We say that the family is $\delta$-interacting if
\begin{equation}\label{interacting}
\max\big\{Q(\xi_i,\xi_j,\lambda_i,\lambda_j):~i,j=1,\cdots,\kappa\big\}<\delta.
\end{equation}

\end{definition}

Our main result is in the following:
    \begin{theorem}\label{Figalli}
  For any dimension $3\leq N<6-\mu$ and $\kappa\geq2$, $\mu\in(0,N)$ satisfying $0<\mu\leq4$, there exist a small constant $\delta=\delta(N,\kappa)>0$ and a large constant $C=C(N,\kappa)>0$ such that the following statement holds. Let $u\in \mathcal{D}^{1,2}(\mathbb{R}^N)$ be a function such that
  \begin{equation*}
\Big\|\nabla u-\sum_{i=1}^{\kappa}\nabla \widetilde{W}[\xi_i,\lambda_i]\Big\|_{L^2}\leq\delta,
\end{equation*}
where $\big(\widetilde{W}[\xi_i,\lambda_i]\big)_{1\leq i\leq\kappa}$ is a $\delta$-interacting family of Talenti bubbles.
Then there exist $\kappa$ Talenti bubbles  $(W[\xi_1,\lambda_1],\cdots,W[\xi_\kappa,\lambda_\kappa])$ such that
\begin{equation*}
dist_{\mathcal{D}^{1,2}}\big(u,\mathcal{\widetilde{M}}_0^{\kappa}\big)\leq C\Big\|\Delta u+\left(\frac{1}{|x|^{\mu}}\ast |u|^{2_{\mu}^{\ast}}\right)|u|^{2_{\ast}^{\ast}-2}u\Big\|_{(\mathcal{D}^{1,2}(\mathbb{R}^N))^{-1}},
\end{equation*}
where
$$\mathcal{\widetilde{M}}_0^{\kappa}=\Big\{\sum_{i=1}^{\kappa}W[\xi_i,\lambda_i]:  ~\xi_i\in\mathbb{R}^N,~\lambda_i>0\Big\}.$$
    \end{theorem}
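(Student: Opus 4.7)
The plan is to adapt the strategy of Figalli and Glaudo \cite{FG20} to the nonlocal setting. The three ingredients are: (i) the nonlocal profile decomposition of Theorem~\ref{F4}; (ii) a spectral coercivity estimate for the linearization of \eqref{ele} at a sum of weakly interacting bubbles; and (iii) sharp bubble-interaction estimates for the nonlocal nonlinearity, which turn out to be subdominant exactly when $3\leq N<6-\mu$ and $0<\mu\leq4$.

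First, I will reduce to a configuration in which $u$ is close to a specific sum of bubbles. Combining Theorem~\ref{F4} with a compactness argument on the finite-dimensional parameter space, the infimum defining $\mathrm{dist}_{\mathcal{D}^{1,2}}(u,\mathcal{\widetilde{M}}_0^\kappa)$ is attained at some $\sigma=\sum_{i=1}^\kappa W[\xi_i,\lambda_i]$. Writing $u=\sigma+\rho$, the first-order optimality conditions yield that $\rho$ is $\mathcal{D}^{1,2}$-orthogonal to the tangent space
\[
T_\sigma\mathcal{\widetilde{M}}_0^\kappa=\mathrm{span}\bigl\{\partial_{\lambda_i}W[\xi_i,\lambda_i],\ \partial_{\xi_i^k}W[\xi_i,\lambda_i]:\ 1\leq i\leq\kappa,\ 1\leq k\leq N\bigr\}.
\]
The $\delta$-interacting hypothesis together with the smallness of $\|\nabla u-\sum_i\nabla\widetilde W[\xi_i,\lambda_i]\|_{L^2}$ transfers to the optimal family, so that both $Q(\xi_i,\xi_j,\lambda_i,\lambda_j)$ and $\|\nabla\rho\|_{L^2}$ are $o_\delta(1)$ as $\delta\to 0$.

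Second, I will establish the coercivity. Since each $W_i:=W[\xi_i,\lambda_i]$ solves \eqref{ele}, the linearization associated with the nonlinearity $f(u):=(|x|^{-\mu}\ast|u|^{2_\mu^\ast})|u|^{2_\mu^\ast-2}u$ is
\[
L_\sigma[\phi]:=-\Delta\phi-(2_\mu^\ast-1)\bigl(|x|^{-\mu}\ast\sigma^{2_\mu^\ast}\bigr)\sigma^{2_\mu^\ast-2}\phi-2_\mu^\ast\bigl(|x|^{-\mu}\ast(\sigma^{2_\mu^\ast-1}\phi)\bigr)\sigma^{2_\mu^\ast-1}.
\]
For a single bubble the nondegeneracy $\ker L_{W_i}=T_{W_i}\mathcal{\widetilde{M}}_0$ is available in the nonlocal setting (cf.~\cite{DSB213,GHPS19,DSW21}); a localization argument on each bubble's natural scale, combined with the smallness of the cross interactions, then yields a uniform coercivity estimate $\langle L_\sigma\rho,\rho\rangle\geq c\|\nabla\rho\|_{L^2}^2$ for every $\rho\perp T_\sigma\mathcal{\widetilde{M}}_0^\kappa$, provided $\delta$ is small enough. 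Testing the equation against $\rho$ and using $\Delta W_i+f(W_i)=0$ together with the expansion $f(\sigma+\rho)=f(\sigma)+f'(\sigma)\rho+R(\sigma,\rho)$ gives
\[
\bigl\langle-\Delta u-f(u),\rho\bigr\rangle=\langle L_\sigma\rho,\rho\rangle-\Bigl\langle f(\sigma)-\textstyle\sum_i f(W_i),\rho\Bigr\rangle-\langle R(\sigma,\rho),\rho\rangle,
\]
where the left-hand side is bounded by $\|\Delta u+f(u)\|_{(\mathcal{D}^{1,2}(\mathbb{R}^N))^{-1}}\|\nabla\rho\|_{L^2}$ and the nonlinear remainder $R(\sigma,\rho)$ is controlled by H\"older, HLS and Sobolev inequalities, exploiting $\|\nabla\rho\|_{L^2}\ll 1$.

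The main obstacle, and the source of the parameter restriction, is the bubble-interaction term $I:=\langle f(\sigma)-\sum_i f(W_i),\rho\rangle$. Expanding the nonlocal nonlinearity produces four-index HLS-type integrals
\[
\int\!\!\int\frac{W_i^{a}(x)W_j^{b}(y)\rho(y)}{|x-y|^\mu}\,dx\,dy,\qquad a+b=2(2_\mu^\ast-1),
\]
which I estimate using the explicit decay of the profile \eqref{defU} together with the HLS inequality, in the spirit of the interaction parameter of Definition~\ref{definition}. In the range $3\leq N<6-\mu$, $0<\mu\leq 4$ one has $2_\mu^\ast>3$ and $2_\mu^\ast-2=(4-\mu)/(N-2)>0$, so the higher-order corrections in $Q_{ij}$ stay strictly subdominant to the leading ones; consequently $I$ is controlled by the dual norm of the residue times $\|\nabla\rho\|_{L^2}$ up to errors absorbable by the smallness of the $Q_{ij}$. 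This is precisely the threshold past which the counterexamples of \cite{FG20} in the local case block a linear stability bound; the analogue here is the reason the result is expected to be sharp. Combining the coercivity with these bounds and absorbing errors yields $\|\nabla\rho\|_{L^2}\leq C\|\Delta u+f(u)\|_{(\mathcal{D}^{1,2}(\mathbb{R}^N))^{-1}}$, and since $\mathrm{dist}_{\mathcal{D}^{1,2}}(u,\mathcal{\widetilde{M}}_0^\kappa)\leq\|\nabla\rho\|_{L^2}$ the conclusion follows.
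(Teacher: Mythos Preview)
Your outline has the right large-scale shape (minimize, get orthogonality, test against $\rho$, use coercivity, control the remainders), and you correctly identify that the dimension condition $3\leq N<6-\mu$ is tied to bubble-interaction estimates. But there are two genuine gaps compared with what the paper actually does.

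\textbf{The interaction term is not ``absorbable''.} The leading contribution to $I=\langle f(\sigma)-\sum_i f(W_i),\rho\rangle$ is of order $\sum_{i\neq j}Q_{ij}\,\|\nabla\rho\|_{L^2}$ (these are the terms $(I_3)$, $(I_4)$ in the paper's expansion). If you merely use $Q_{ij}\leq\delta$, after the coercivity step you end up with $\|\nabla\rho\|_{L^2}\lesssim\|\Delta u+f(u)\|_{(\mathcal{D}^{1,2})^{-1}}+\delta$, which is not the theorem. The paper closes this by a separate argument (Proposition~\ref{estimate2}): one tests the equation not against $\rho$ but against the localized functions $\mathcal{W}\Psi$ and $(\partial_\lambda\mathcal{W})\Psi$ from Proposition~\ref{fai2}, and after a careful nonlocal expansion obtains
\[
\int\Bigl(\tfrac{1}{|x|^{\mu}}\ast W_i^{2_\mu^\ast}\Bigr)W_i^{2_\mu^\ast-1}W_j
\ \lesssim\ \epsilon\|\nabla\rho\|_{L^2}+\|\Delta u+f(u)\|_{(\mathcal{D}^{1,2})^{-1}}+\|\nabla\rho\|_{L^2}^{\min(2,\frac{N-\mu+2}{N-2})}.
\]
Only with this bound in hand can the $Q_{ij}\|\nabla\rho\|_{L^2}$ terms be traded for the dual norm and higher powers of $\|\nabla\rho\|_{L^2}$. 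The dimension restriction enters here, and not quite for the reason you give: it is needed so that $\|W_i^{2_\mu^\ast-2}W_j\|_{L^{2N/(N-\mu+2)}}\approx Q_{ij}$ (see the Remark following the proof), otherwise this norm is $\gg Q_{ij}$ and the scheme collapses.

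\textbf{Missing orthogonality to $W_i$.} You minimize over $\mathcal{\widetilde M}_0^\kappa$ and therefore obtain $\rho\perp\partial_\lambda W_i,\partial_\xi W_i$ only. But each $W_i$ is itself an eigenfunction of the linearized operator (eigenvalue $1$ in Proposition~\ref{propep}), so orthogonality to $W_i$ is required for the spectral gap behind the coercivity of Proposition~\ref{estimate1}. The paper instead minimizes over $\sum_i\alpha_i W_i$, which yields the additional orthogonality $\rho\perp W_i$; the price is that the $\alpha_i$ are not a priori equal to $1$, and Proposition~\ref{estimate2} is again used to show $|\alpha_i-1|$ obeys the same bound as $Q_{ij}$, so that $\mathrm{dist}_{\mathcal{D}^{1,2}}(u,\mathcal{\widetilde M}_0^\kappa)\leq\|\nabla\rho\|_{L^2}+\sum_i|\alpha_i-1|\cdot\|\nabla W_i\|_{L^2}$ is still controlled.
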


As a direct consequence of Theorem \ref{Figalli}, we can obtain the following two corollaries.
 \begin{corollary}\label{Figalli2}
  For any dimension $3\leq N<6-\mu$, $\mu\in(0,N)$ satisfying $0<\mu\leq4$, and $\kappa\in\mathbb{N}$, there exists a constant constant $C=C(N,\kappa)>0$ such that the following statement holds. For any nonnegative function $u\in \mathcal{D}^{1,2}(\mathbb{R}^N)$ such that
\begin{equation*}
\big(\kappa-\frac{1}{2})S_{HL}^{\frac{2N-\mu}{N+2-\mu}}\leq\|u\|_{\mathcal{D}^{1,2}(\mathbb{R}^N)}^2\leq\big(\kappa+\frac{1}{2}\big)S_{HL}^{\frac{2N-\mu}{N+2-\mu}},
\end{equation*}
 there exist $\kappa$ Talenti bubbles  $(W[\xi_1,\lambda_1],\cdots,W[\xi_\kappa,\lambda_\kappa])$ such that
\begin{equation*}
dist_{\mathcal{D}^{1,2}}\big(u,\mathcal{\widetilde{M}}_0^{\kappa}\big)\leq C\Big\|\Delta u+\left(\frac{1}{|x|^{\mu}}\ast |u|^{2_{\mu}^{\ast}}\right)|u|^{2_{\ast}^{\ast}-2}u\Big\|_{(\mathcal{D}^{1,2}(\mathbb{R}^N))^{-1}}.
\end{equation*}
Furthermore, for any $i\neq j$, the interaction between the bubbles can be estimated as
\begin{equation*}
\int_{\mathbb{R}^N}\Big(\frac{1}{|x|^{\mu}}\ast W[\xi_i,\lambda_i]^{2_{\mu}^{*}}\Big)W[\xi_i,\lambda_i]^{2_{\mu}^{*}-1}W[\xi_j,\lambda_j]\leq C\Big\|\Delta u+\left(\frac{1}{|x|^{\mu}}\ast |u|^{2_{\mu}^{\ast}}\right)|u|^{2_\mu^{\ast}-2}u\Big\|_{(\mathcal{D}^{1,2}(\mathbb{R}^N))^{-1}}.
\end{equation*}
    \end{corollary}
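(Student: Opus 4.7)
The plan is to deduce Corollary \ref{Figalli2} by combining the Struwe-type profile decomposition (Theorem \ref{F4}) with the quantitative stability near $\delta$-interacting configurations (Theorem \ref{Figalli}) via a standard contradiction/compactness argument, and then to extract the interaction estimate by testing the Euler--Lagrange residual against an individual bubble.

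First, I would establish the $\mathcal{D}^{1,2}$-distance bound by contradiction. Suppose the first inequality fails; then there is a sequence of nonnegative functions $u_m\in\mathcal{D}^{1,2}(\mathbb{R}^N)$ in the prescribed energy window such that
\[
\mathrm{dist}_{\mathcal{D}^{1,2}}(u_m,\widetilde{\mathcal{M}}_0^\kappa)\;>\;m\cdot\Big\|\Delta u_m+\big(|x|^{-\mu}\ast u_m^{2_\mu^*}\big)u_m^{2_\mu^*-1}\Big\|_{(\mathcal{D}^{1,2}(\mathbb{R}^N))^{-1}}.
\]
The left-hand side is dominated by $\|u_m\|_{\mathcal{D}^{1,2}}$, hence uniformly bounded by the energy hypothesis, so the residual norm must tend to $0$. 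By Remark \ref{remark0}, $(u_m)$ is then a Palais--Smale sequence at the $\kappa$-bubble level, and Theorem \ref{F4} furnishes parameters $(\xi_i^{(m)},\lambda_i^{(m)})$ with $\|\nabla(u_m-\sum_{i=1}^{\kappa}W[\xi_i^{(m)},\lambda_i^{(m)}])\|_{L^2}\to 0$. Moreover, the iterative construction underlying Struwe's decomposition ensures that the bubbles are concentration-orthogonal: for every $i\ne j$, either the scale ratios $\lambda_i^{(m)}/\lambda_j^{(m)}+\lambda_j^{(m)}/\lambda_i^{(m)}$ or the rescaled separation $\lambda_i^{(m)}\lambda_j^{(m)}|\xi_i^{(m)}-\xi_j^{(m)}|^2$ tends to $\infty$; hence $Q(\xi_i^{(m)},\xi_j^{(m)},\lambda_i^{(m)},\lambda_j^{(m)})\to 0$, so the family is $\delta$-interacting for $m$ large. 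Theorem \ref{Figalli} then applies to $u_m$ and yields $\mathrm{dist}_{\mathcal{D}^{1,2}}(u_m,\widetilde{\mathcal{M}}_0^\kappa)\leq C\|R(u_m)\|_{(\mathcal{D}^{1,2}(\mathbb{R}^N))^{-1}}$, contradicting the divergent ratio.

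For the interaction estimate, I would fix the bubbles obtained above and decompose $u=\sum_{i=1}^{\kappa}W[\xi_i,\lambda_i]+\rho$ with $\|\nabla\rho\|_{L^2}\leq\mathrm{dist}_{\mathcal{D}^{1,2}}(u,\widetilde{\mathcal{M}}_0^\kappa)\leq C\|R(u)\|_{(\mathcal{D}^{1,2}(\mathbb{R}^N))^{-1}}$, where $R(u):=\Delta u+(|x|^{-\mu}\ast u^{2_\mu^*})u^{2_\mu^*-1}$. Testing $R(u)$ against the bubble $W[\xi_j,\lambda_j]$ and using that each $W[\xi_i,\lambda_i]$ solves \eqref{ele}, the expansion of the Hartree nonlinearity about $\sum_i W_i$ produces, as the leading contribution on the nonlinear side, precisely the cross integral $\int(|x|^{-\mu}\ast W[\xi_i,\lambda_i]^{2_\mu^*})W[\xi_i,\lambda_i]^{2_\mu^*-1}W[\xi_j,\lambda_j]$. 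The remaining error terms split into contributions bounded by $\|\nabla\rho\|_{L^2}\leq C\|R(u)\|_{(\mathcal{D}^{1,2}(\mathbb{R}^N))^{-1}}$ and higher-order interaction products, which are absorbed by the leading $Q_{ij}$ once $\delta$ is sufficiently small.

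The main obstacle I anticipate is the sharp asymptotic analysis of the nonlocal cross integral $\int(|x|^{-\mu}\ast W_i^{2_\mu^*})W_i^{2_\mu^*-1}W_j\asymp Q(\xi_i,\xi_j,\lambda_i,\lambda_j)$ together with the fine remainder control for the full Hartree nonlinearity under the substitution $u=\sum_i W_i+\rho$. These are nonlocal analogues of the local Sobolev computations in \cite{FG20,DSW21}; once they are in hand---as developed elsewhere in the body of this paper---the remaining steps reduce to routine algebra.
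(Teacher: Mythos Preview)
Your proposal is correct and follows essentially the same route as the paper: the distance bound is obtained by combining Theorem~\ref{F4} with Theorem~\ref{Figalli} via exactly the contradiction/compactness argument you describe, and the interaction estimate is the content of Proposition~\ref{estimate2} (second conclusion) once one feeds back the bound $\|\nabla\rho\|_{L^2}\lesssim\|R(u)\|_{(\mathcal{D}^{1,2})^{-1}}$ into its right-hand side. Your explicit remark that Struwe's profile decomposition forces the bubbles to be $\delta$-interacting is a detail the paper's proof takes for granted; the only simplification available is that you need not redo the testing-against-$W_j$ computation, since Proposition~\ref{estimate2} already packages it.
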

    \begin{corollary}\label{Figalli-0}
  For any dimension $N\geq3$ and $\kappa=1$, $\mu\in(0,N)$ satisfying $0<\mu\leq4$, there exists a large constant $C=C(N,\kappa)>0$ such that the following statement holds. Let $u\in \mathcal{D}^{1,2}(\mathbb{R}^N)$ be a nonnegative function such that
  \begin{equation}\label{Fi}
\frac{1}{2}S_{HL}^{\frac{2N-\mu}{N+2-\mu}}\leq\|u\|_{\mathcal{D}^{1,2}(\mathbb{R}^N)}^2\leq\frac{3}{2}S_{HL}^{\frac{2N-\mu}{N+2-\mu}}.
\end{equation}
Then there exists $1$ Talenti bubble $W[\xi,\lambda]$ such that
\begin{equation}\label{Fi1}
dist_{\mathcal{D}^{1,2}}\big(u,\mathcal{\widetilde{M}}_0^{1}\big)\leq C\Big\|\Delta u+\left(\frac{1}{|x|^{\mu}}\ast |u|^{2_{\mu}^{\ast}}\right)|u|^{2_{\ast}^{\ast}-2}u\Big\|_{(\mathcal{D}^{1,2}(\mathbb{R}^N))^{-1}},
\end{equation}
where
$\mathcal{\widetilde{M}}_0^{1}=\big\{W[\xi,\lambda]:  \xi\in\mathbb{R}^N, \lambda>0\big\}.$
    \end{corollary}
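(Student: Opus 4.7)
I would adapt the single-bubble contradiction/spectral argument of Ciraolo--Figalli--Maggi to the nonlocal setting, combining the Struwe-type profile decomposition of Theorem \ref{F4} (with $\kappa=1$) with a coercivity estimate for the linearised Choquard operator at a bubble. The case $\kappa=1$ is analytically easier than Theorem \ref{Figalli} because no bubble--bubble interaction term appears, which is why the statement holds for every $N\geq 3$.

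Argue by contradiction: assume the claim fails, so there is a sequence $(u_m)\subset\mathcal{D}^{1,2}(\mathbb{R}^N)$ of nonnegative functions satisfying \eqref{Fi} with
\[
\varepsilon_m:=\Big\|\Delta u_m+(|x|^{-\mu}\ast|u_m|^{2_\mu^*})|u_m|^{2_\mu^*-2}u_m\Big\|_{(\mathcal{D}^{1,2})^{-1}}\to 0,
\qquad
d_m:=\mathrm{dist}_{\mathcal{D}^{1,2}}(u_m,\widetilde{\mathcal{M}}_0^{1}),
\qquad d_m/\varepsilon_m\to\infty.
\]
Since the energy range in \eqref{Fi} matches the $\kappa=1$ hypothesis of Theorem \ref{F4}, that theorem supplies parameters $(\xi_m,\lambda_m)$ with $\|u_m-W[\xi_m,\lambda_m]\|_{\mathcal{D}^{1,2}}\to 0$, so in particular $d_m\to 0$ and the minimisation problem defining $d_m$ is achieved on a compact set of parameters; select a minimiser $W_m=W[\xi_m^{*},\lambda_m^{*}]$ and write $u_m=W_m+\rho_m$ with $\|\nabla\rho_m\|_{L^2}=d_m$. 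First-order optimality gives the $\mathcal{D}^{1,2}$-orthogonality $\rho_m\perp T_{W_m}\widetilde{\mathcal{M}}_0^{1}=\mathrm{span}\{\partial_{\xi_1}W_m,\dots,\partial_{\xi_N}W_m,\partial_{\lambda}W_m\}$.

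Setting $F(u):=(|x|^{-\mu}\ast u^{2_\mu^*})u^{2_\mu^*-2}u$, the key analytic input is a uniform spectral gap: there exists $c_0>0$ independent of $m$ with
\[
Q_{W_m}(\varphi,\varphi):=\int_{\mathbb{R}^N}|\nabla\varphi|^2\,dx-\int_{\mathbb{R}^N}F'(W_m)[\varphi]\,\varphi\,dx \;\geq\; c_0\,\|\nabla\varphi\|_{L^2}^{2}
\]
for every $\varphi\perp T_{W_m}\widetilde{\mathcal{M}}_0^{1}$. By translation and dilation invariance this reduces to a fixed bubble $\overline W=W[0,1]$ and amounts to the non-degeneracy of $\overline W$ as a critical point of \eqref{ele}: the kernel of the linearised Choquard operator on $\mathcal{D}^{1,2}$ is exactly the $(N+1)$-dimensional tangent space, with no constant-multiple direction arising since $F$ is not $1$-homogeneous. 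The gap then follows by a Fredholm/compactness argument on the orthogonal complement.

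Using that $W_m$ solves \eqref{ele}, one has
\[
\Delta\rho_m+F'(W_m)[\rho_m]+\mathcal{N}(\rho_m)=h_m, \qquad \|h_m\|_{(\mathcal{D}^{1,2})^{-1}}\leq\varepsilon_m,
\]
with quadratic remainder $\mathcal{N}(\rho_m)=F(W_m+\rho_m)-F(W_m)-F'(W_m)[\rho_m]$. Under $0<\mu\leq 4$, the HLS inequality combined with Sobolev embedding yields $|\langle\mathcal{N}(\rho_m),\rho_m\rangle|\leq C\,d_m^{2+\alpha}$ for some $\alpha=\alpha(N,\mu)>0$. Pairing the identity with $\rho_m$ and applying the spectral gap gives
\[
c_0\,d_m^{2}\;\leq\;Q_{W_m}(\rho_m,\rho_m)\;=\;-\langle h_m,\rho_m\rangle+\langle\mathcal{N}(\rho_m),\rho_m\rangle\;\leq\;\varepsilon_m d_m+C\,d_m^{2+\alpha}.
\]
For $m$ large the last term is absorbed on the left-hand side, producing $d_m\leq(2/c_0)\varepsilon_m$ and contradicting $d_m/\varepsilon_m\to\infty$. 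The main obstacle is the spectral non-degeneracy: identifying the kernel of the linearised nonlocal operator and upgrading it to a \emph{uniform} positive lower bound on its orthogonal complement. Once this is in place, the remaining steps are HLS/Sobolev bookkeeping, and $\mu\leq 4$ enters only through the quadratic control of $\mathcal{N}$.
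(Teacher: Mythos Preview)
Your overall strategy mirrors the paper's, but the spectral gap you assert is false as stated, and this is the heart of the argument. You minimise over $\widetilde{\mathcal M}_0^1=\{W[\xi,\lambda]\}$ and obtain $\rho_m\perp\{\partial_{\xi_j}W_m,\partial_\lambda W_m\}$ only, then claim $Q_{W_m}(\varphi,\varphi)\geq c_0\|\nabla\varphi\|_{L^2}^2$ for all $\varphi$ orthogonal to this $(N{+}1)$-dimensional space. But the bubble direction $W_m$ itself is $\mathcal D^{1,2}$-orthogonal to that tangent space (by scale and translation invariance of $\|\nabla W\|_{L^2}^2$), and a direct computation using $-\Delta W=F(W)$ gives
\[
Q_W(W,W)=\|\nabla W\|_{L^2}^2-(2\cdot 2_\mu^*-1)\int(|x|^{-\mu}\ast W^{2_\mu^*})W^{2_\mu^*}=(2-2\cdot 2_\mu^*)\|\nabla W\|_{L^2}^2<0.
\]
So $Q_{W_m}$ has a negative direction in your admissible set and no coercivity can hold there. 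Your remark that ``no constant-multiple direction arises since $F$ is not $1$-homogeneous'' conflates the kernel of the linearised operator (which is indeed $(N{+}1)$-dimensional, by Proposition~\ref{prondgr}) with positivity on its orthogonal complement; nondegeneracy rules out a zero mode along $W$, not a negative one.

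The paper avoids this by minimising over $\alpha W[\xi,\lambda]$ (free coefficient), which yields the extra orthogonality $\int\nabla\rho\cdot\nabla W=0$; Proposition~\ref{propep} then gives $\bar\nu_3>2_\mu^*$ and hence coercivity on the $(N{+}2)$-codimensional complement (Proposition~\ref{estimate1} with $\kappa=1$). One must then separately show $|\alpha-1|\lesssim \varepsilon+\|\nabla\rho\|^{\min(2,(N-\mu+2)/(N-2))}$ (Proposition~\ref{estimate2} with $\kappa=1$) to pass from $\|\nabla\rho\|$ back to $\mathrm{dist}(u,\widetilde{\mathcal M}_0^1)$. With this correction your contradiction scheme goes through, and since for $\kappa=1$ the interaction terms $(I_1)$--$(I_5)$ in the proof of Theorem~\ref{Figalli} are absent, the restriction $N<6-\mu$ indeed disappears.
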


To prove the main results in Theorem \ref{Figalli} and Corollaries \ref{Figalli2}-\ref{Figalli-0}, the arguments depend a lot on the nondegeneracy property of
the positive solutions of equation \eqref{ele}. The topic of non-degeneracy of the ground state solution to the semilinear elliptic equation or nonlinear Hartree equation has generated much interest in recent years. It is a key ingredient in the stability analysis of functional inequality and Lyapunov-Schmidt reduction method of constructing  blow-up solutions of the equation, see, for example \cite{Medina}. Recently,
Du and Yang \cite{DY19} proved that if $\mu$ is close to $N$ with $N=3$ or $4$, $W[\xi,\lambda](x)$ as in (\ref{defU}) is nondegenerate in the sense that solutions of the linearized equation
\begin{equation}\label{deflp}
-\Delta \phi=2^*_\mu\left(|x|^{-\mu} \ast (W^{2^*_\mu-1}[\xi,\lambda]\phi)\right)W^{2^*_\mu-1}[\xi,\lambda] +(2^*_\mu-1)\left(|x|^{-\mu} \ast W^{2^*_\mu}[\xi,\lambda]\right)W^{2^*_\mu-2}[\xi,\lambda]\phi
 \end{equation}
for $x\in\mathbb{R}^N$. $\phi\in \mathcal{D}^{1,2}(\mathbb{R}^N)$ are linear combinations of functions $\frac{N-2}{2}W[\xi,\lambda]+x\cdot\nabla W[\xi,\lambda]$ and $\partial_{x_j} W[\xi,\lambda]$, $j=1,\ldots,N$. However, it is hard to investigate the expansion of the nonlocal term by spherical harmonics since the working space is $\mathcal{D}^{1,2}(\mathbb{R}^N)$ for the critical case. So it is necessary to borrow some other ideas to analyze the nondegeneracy property for the critical problem.
Later, Gao et al. \cite{GMYZ22} showed that a nondegeneracy result at $W[\xi,\lambda]$ for \eqref{ele} when $N=6$ and $\mu=4$, and also proposed that the problem is an open within the remaining range of $N,\mu$. Recently, Li et al. in \cite{XLi} gave a complete describtion of the nondegeneracy which can be stated as
follows:

\vskip0.25cm

\begin{proposition}\label{prondgr}
Assume $N\geq 3$, $0<\mu<N$ with $0<\mu\leq4$. Let $W[\xi,\lambda]$ be as in \eqref{defU}.
Then the linearized operator of equation (\ref{ele}) at $W[\xi,\lambda]$ defined by
\begin{equation*}
L[\phi]:=-\Delta \phi-2^*_\mu\left(|x|^{-\mu} \ast (W^{2^*_\mu-1}[\xi,\lambda]\phi)\right)W^{2^*_\mu-1}[\xi,\lambda]
-(2^*_\mu-1)\left(|x|^{-\mu} \ast W^{2^*_\mu}[\xi,\lambda]\right)W^{2^*_\mu-2}[\xi,\lambda]\phi
\end{equation*}
only admits solutions in $\mathcal{D}^{1,2}(\mathbb{R}^N)$ of the form
\begin{equation*}
\phi=aD_\lambda W[\xi,\lambda] +\mathbf{b}\cdot\nabla W[\xi,\lambda],
\end{equation*}
where $a\in\mathbb{R}$, $\mathbf{b}\in\mathbb{R}^N$.
\end{proposition}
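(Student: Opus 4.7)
My plan is to follow the classical conformal strategy adapted to handle the Hartree-type nonlocality, exploiting the conformal invariance of equation \eqref{ele}. By the translation and dilation symmetries of $L$, I may assume $\xi = 0$ and $\lambda = 1$, and work with $W_0 := W[0,1]$. The key structural input is that stereographic projection $\pi:\mathbb{S}^N\setminus\{p\}\to\mathbb{R}^N$ converts both the Laplacian and the Riesz-type kernel $|x-y|^{-\mu}$ into natural objects on $\mathbb{S}^N$, because equation \eqref{ele} is conformally invariant.

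Setting $\tilde\phi(\omega) := (J_\pi(\omega))^{(N-2)/(2N)}\,\phi(\pi(\omega))$, the bubble $W_0$ pulls back to a constant on $\mathbb{S}^N$, and $L[\phi]=0$ becomes, schematically,
\begin{equation*}
-\Delta_{\mathbb{S}^N}\tilde\phi + c_N\tilde\phi = \alpha_{N,\mu}\,\mathbf{K}_\mu[\tilde\phi] + \beta_{N,\mu}\,\tilde\phi,
\end{equation*}
where $\mathbf{K}_\mu$ is the spherical convolution against the kernel $|1-\omega\cdot\eta|^{-\mu/2}$ and $c_N,\alpha_{N,\mu},\beta_{N,\mu}$ are explicit positive constants. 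Expanding $\tilde\phi = \sum_{k\geq 0} Y_k$ into spherical harmonics, with $-\Delta_{\mathbb{S}^N} Y_k = k(k+N-1)Y_k$, the Funk--Hecke formula shows that $\mathbf{K}_\mu$ acts on each $Y_k$ as multiplication by an explicit scalar $\gamma_k(N,\mu)$ expressible via Gamma functions. The problem therefore decouples into the family of scalar conditions
\begin{equation*}
F(k)\,Y_k = 0, \qquad F(k) := k(k+N-1) + c_N - \alpha_{N,\mu}\gamma_k(N,\mu) - \beta_{N,\mu}, \qquad k=0,1,2,\dots
\end{equation*}

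A direct computation shows that $D_\lambda W|_{\lambda=1}$ and $\partial_{x_j}W_0$ pull back under $\pi$ precisely onto the $(N+1)$-dimensional space of degree-$1$ spherical harmonics, which forces $F(1)=0$ automatically and accounts for the known solutions listed in the conclusion. Nondegeneracy is then equivalent to the strict inequalities $F(0)\neq 0$ and $F(k)\neq 0$ for every $k\geq 2$. The mode $k=0$ (a ``mass'' direction) is ruled out by the strict positivity of a particular Gamma ratio; for $k\geq 2$ one shows that the linear growth $k(k+N-1)$ strictly dominates $\alpha_{N,\mu}\gamma_k(N,\mu)$, so $F(k)>0$ eventually, and a finite case check at $k=2,3$ completes the argument.

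The principal obstacle is this final sign analysis. The restriction $0<\mu\leq 4$ enters precisely here, ensuring via explicit Gamma/hypergeometric identities that $F(0)\neq 0$ and $F(k)>0$ for all $k\geq 2$. Beyond this range the Funk--Hecke coefficients $\gamma_k$ can conspire with the Laplace--Beltrami eigenvalues to produce a spurious zero, which is exactly why, as noted in the excerpt, the case $\mu>4$ appears to remain open and seems to require genuinely different ideas.
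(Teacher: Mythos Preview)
The paper does not supply its own proof of Proposition~\ref{prondgr}: it is quoted from \cite{XLi} (with the partial cases $N=3,4$ and $\mu$ close to $N$ already in \cite{DY19}, and the case $N=6$, $\mu=4$ in \cite{GMYZ22}). Your outline---stereographic projection to $\mathbb{S}^N$, reduction of the linearized equation to a Funk--Hecke multiplier problem on spherical harmonics, identification of the $k=1$ block with the scaling and translation directions, and a sign/monotonicity analysis of the remaining modes---is precisely the strategy employed in those references, so your approach is the same as in the cited source.

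Two minor sharpenings of your sketch. First, the usual way to dispose of $k\geq 2$ is not an asymptotic-plus-finite-check argument but a monotonicity one: the Funk--Hecke multipliers $\gamma_k(N,\mu)$ are strictly decreasing in $k$ (this is where the restriction $0<\mu\leq 4$ is used), while $k(k+N-1)$ is strictly increasing, so $F$ is strictly increasing on $k\geq 1$ and $F(1)=0$ forces $F(k)>0$ for all $k\geq 2$ at once. Second, the $k=0$ mode is not merely nonzero but satisfies $F(0)<0$; in the paper's own eigenvalue formulation (Proposition~\ref{propep}) this corresponds to the first eigenvalue $\bar\nu_1=1$ with eigenfunction $W$, which lies strictly below the ``critical'' value $\bar\nu_2=2^*_\mu$ attached to the $k=1$ block.
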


The paper is organized as follows. Theorem~\ref{thme1} is proved in Section \ref{sectevp0}. We first establish a stability of profile decompositions and complete the proof of Theorem \ref{F4} in Section \ref{sectrt}.  Later, we describe proof of Theorem \ref{Figalli} and Corollaries \ref{Figalli2}-\ref{Figalli-0} in Section \ref{sectrt}. The proof of Theorem \ref{Figalli} requires some crucial Propositions, the proofs of which are given in Section~\ref{section5}.

\textbf{Notations.} Throughout this paper, $C$ and $\widetilde{C}$ are indiscriminately used to denote various absolutely positive constants. We say that $a\lesssim b$ if $a\leq Cb$, $a\approx b$ if $a\lesssim b$ and $a\gtrsim b$.

\section{Quantitative estimate for nonlocal Sobolev inequality}\label{sectevp0}
In this section, we prove Theorem \ref{thme1} using a version of Clarkson's inequality for vector-valued functions \cite{Neumayer}, which we recall here:
  \begin{proposition}\label{Clarkson}
Let $X,Y:\mathbb{R}^N\rightarrow\mathbb{R}^N$ with $|X|,|Y|\in L^2(\mathbb{R}^N)$. Then
\begin{equation*}
\Big\|\frac{X+Y}{2}\Big\|_{L^2}^2+\Big\|\frac{X-Y}{2}\Big\|_{L^2}^2\leq\frac{1}{2}\big\|X\big\|_{L^2}^2+
\frac{1}{2}\big\|Y\big\|_{L^2}^2.
\end{equation*}
 \end{proposition}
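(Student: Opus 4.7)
The plan is to reduce the claimed $L^2$ vector-valued inequality to the pointwise parallelogram identity in Euclidean space and then integrate. Since the statement merely asks for an inequality while $p=2$ gives equality, the argument is short.

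First, I would fix $x\in\mathbb{R}^N$ and apply the standard parallelogram law in the Euclidean inner product space $(\mathbb{R}^N,\langle\cdot,\cdot\rangle)$ to the two vectors $X(x), Y(x)\in\mathbb{R}^N$. Expanding $|X(x)+Y(x)|^2=|X(x)|^2+2\langle X(x),Y(x)\rangle+|Y(x)|^2$ and $|X(x)-Y(x)|^2=|X(x)|^2-2\langle X(x),Y(x)\rangle+|Y(x)|^2$, the cross terms cancel upon addition, giving the pointwise identity
\begin{equation*}
|X(x)+Y(x)|^2+|X(x)-Y(x)|^2 \;=\; 2|X(x)|^2+2|Y(x)|^2.
\end{equation*}

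Next, I would divide both sides by $4$ to obtain
\begin{equation*}
\Bigl|\tfrac{X(x)+Y(x)}{2}\Bigr|^2+\Bigl|\tfrac{X(x)-Y(x)}{2}\Bigr|^2 \;=\; \tfrac{1}{2}|X(x)|^2+\tfrac{1}{2}|Y(x)|^2.
\end{equation*}
Since $|X|,|Y|\in L^2(\mathbb{R}^N)$, the right-hand side is integrable; consequently the two nonnegative terms on the left are each integrable as well (each being bounded pointwise by the right-hand side after discarding the other nonnegative summand).

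Finally, I would integrate both sides over $\mathbb{R}^N$ and use the definition $\|\cdot\|_{L^2}^2=\int_{\mathbb{R}^N}|\cdot|^2\,dx$ to conclude
\begin{equation*}
\Bigl\|\tfrac{X+Y}{2}\Bigr\|_{L^2}^2+\Bigl\|\tfrac{X-Y}{2}\Bigr\|_{L^2}^2 \;=\; \tfrac{1}{2}\|X\|_{L^2}^2+\tfrac{1}{2}\|Y\|_{L^2}^2,
\end{equation*}
which is sharper than the stated inequality. There is no real obstacle here: the ``difficulty'' of Clarkson's inequality only appears for $p\neq 2$, where the pointwise identity fails and one must rely on genuine convexity estimates in $L^p$; for $p=2$ the inner product structure trivializes the argument via the parallelogram law.
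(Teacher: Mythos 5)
Your proof is correct: the pointwise parallelogram identity in $\mathbb{R}^N$, divided by $4$ and integrated, immediately yields the claim (in fact with equality, which of course implies the stated inequality). The paper itself offers no proof of this proposition --- it simply recalls it from the reference [Neumayer], where the nontrivial content is the case $p\neq 2$ --- so your elementary inner-product argument is exactly the right way to handle the $p=2$ case used here, and there is no gap.
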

\textbf{The proof of Theorem~\ref{thme1}.} Applying the Clarkson's inequality in Proposition with $X=\nabla u$ and $Y=\nabla v$, we have
\begin{equation*}
\begin{split}
\Big\|\frac{\nabla u-\nabla v}{2}\Big\|_{L^2}^2&\leq\frac{1}{2}\big\|\nabla u\big\|_{L^2}^2+
\frac{1}{2}\big\|\nabla v\big\|_{L^2}^2-\Big\|\frac{\nabla u+\nabla v}{2}\Big\|_{L^2}^2.\\
\end{split}
\end{equation*}
Combining $u,v$ satisfy the nonlocal Sobolev inequality \eqref{Prm}, we find
\begin{equation*}
\big\|\nabla v\big\|_{L^2}^2\leq\big\|\nabla u\big\|_{L^2}^2
\end{equation*}
and
\begin{equation*}
\Big\|\nabla u+\nabla v\Big\|_{L^2}^2\geq S_{HL}\left(\int_{\mathbb{R}^N}\big(|x|^{-\mu} \ast|u+v|^{2^*_\mu}\big)|u+v|^{2^*_\mu}\right)^{\frac{1}{2^*_\mu}}.
\end{equation*}
Hence we obtain
\begin{equation*}
\begin{split}
\Big\|\frac{\nabla u-\nabla v}{2}\Big\|_{L^2}^2&\leq\Big\|\nabla u\Big\|_{L^2}^2-S_{HL}\left(\int_{\mathbb{R}^N}\Big(|x|^{-\mu} \ast\Big|\frac{u+v}{2}\Big|^{2^*_\mu}\Big)\Big|\frac{u+v}{2}\Big|^{2^*_\mu} \right)^{\frac{1}{2^*_\mu}}.\\
\end{split}
\end{equation*}
By the semigroup property of the Riesz potential, we omit the coefficients of for convenience and find that
\begin{equation*}
\int_{\mathbb{R}^N}\Big(|x|^{-\mu} \ast\Big|\frac{u+v}{2}\Big|^{2^*_\mu}\Big)\Big|\frac{u+v}{2}\Big|^{2^*_\mu}=\int_{\mathbb{R}^N}\left(|x|^{-\frac{N+\mu}{2}} \ast\Big|\frac{u+v}{2}\Big|^{2^*_\mu}\right)^2
\end{equation*}
and
\begin{equation*}
\int_{\mathbb{R}^N}\Big(|x|^{-\mu} \ast\Big|\frac{u-v}{2}\Big|^{2^*_\mu}\Big)\Big|\frac{u-v}{2}\Big|^{2^*_\mu}=\int_{\mathbb{R}^N}\left(|x|^{-\frac{N+\mu}{2}} \ast\Big|\frac{u-v}{2}\Big|^{2^*_\mu}\right)^2.
\end{equation*}
Combining Minkowski's inequality, then we have
\begin{equation*}
\begin{split}
\left(\int_{\mathbb{R}^N}\frac{|u(y)| ^{2^*_\mu}}{|x-y|^{\frac{N+\mu}{2}}}\right)^2&=\left(\int_{\mathbb{R}^N}\bigg|\frac{\frac{u+v}{2}}{|x-y|^{\frac{N+\mu}{2}\cdot 2_{\mu}^{\ast}}}+\frac{\frac{u-v}{2}}{|x-y|^{\frac{N+\mu}{2}\cdot 2_{\mu}^{\ast}}}\bigg|^{{2_\mu^{\ast}}}\right)^{\frac{1}{2^*_\mu}\cdot2\cdot2_\mu^{\ast}}\\&
\leq \bigg(\bigg\|\frac{\frac{u+v}{2}}{|x-y|^{\frac{N+\mu}{2}\cdot 2_{\mu}^{\ast}}}\bigg\|_{L^{2^*_\mu}}+\bigg\|\frac{\frac{u-v}{2}}{|x-y|^{\frac{N+\mu}{2}\cdot 2_{\mu}^{\ast}}}\bigg\|_{L^{2^*_\mu}}\bigg)^{2\cdot2_{\mu}^{\ast}}.
\end{split}
\end{equation*}
By Minkowski's inequality again, we find that
\begin{equation*}
\begin{split}
\left(\int_{\mathbb{R}^N}\Big(|x-y|^{-\frac{N+\mu}{2}}\ast|u(y)| ^{2^*_\mu}\Big)^2\right)^{\frac{1}{2\cdot2_{\mu}^{\ast}}}&
\leq \left\|\big|x-y\big|^{-\frac{N+\mu}{2}}\ast\big|\frac{u+v}{2}\big| ^{2^*_\mu}\right\|^{\frac{1}{2_{\mu}^{\ast}}}_{L^2}\\&+ \left\|\big|x-y\big|^{-\frac{N+\mu}{2}}\ast\big|\frac{u-v}{2}\big| ^{2^*_\mu}\right\|^{\frac{1}{2_{\mu}^{\ast}}}_{L^2}.
\end{split}
\end{equation*}
Therefore, we get
\begin{equation*}
\begin{split}
\left(\int_{\mathbb{R}^N}\Big(|x|^{-\mu} \ast\Big|\frac{u+v}{2}\Big|^{2^*_\mu}\Big)\Big|\frac{u+v}{2}\Big|^{2^*_\mu} \right)^{\frac{1}{2^*_\mu}}\geq\Big(\big\|u\big\|_{NL}- \Big\|\frac{u-v}{2}\Big\|_{NL}\Big)^2.
\end{split}
\end{equation*}
What's more, by the convexity of the function $f(x)=|x|^2$, $f(x+y)\geq f(x)+f^{\prime}(x)y$, and so
\begin{equation*}
\Big(\big\|u\big\|_{NL}- \Big\|\frac{u-v}{2}\Big\|_{NL}\Big)^2\geq\big\|u\big\|_{NL}^2-2\big\|u\big\|_{NL}\Big\|\frac{u-v}{2}\Big\|_{NL}.
\end{equation*}
These two inequalities imply that
\begin{equation*}
\begin{split}
\Big\|\frac{\nabla u-\nabla v}{2}\Big\|_{L^2}^2&\leq\big\|\nabla u\big\|_{L^2}^2-S_{HL}\big\|u\big\|_{NL}^2+2S_{HL}\big\|u\big\|_{NL}\Big\|\frac{u-v}{2}\Big\|_{NL},
\end{split}
\end{equation*}
and the conclusion follows.

\section{Spectrum of the linear operator}\label{sectevp1}

    For the simplicity of notations, we write $W$ instead of $W[\xi,\lambda]$ defined in (\ref{defU}).
Firstly, we study the following eigenvalue problem
    \begin{equation}\label{Pwhlep}
    \begin{split}
    &-\Delta \omega+\big(|x|^{-\mu} \ast W^{2_\mu^*}\big)W^{2_\mu^*-2}\omega
    =\bar{\nu}\left[\Big(|x|^{-\mu} \ast (W^{2_\mu^*-1}\omega)\Big)W^{2_\mu^*-1}
    +\Big(|x|^{-\mu} \ast W^{2_\mu^*}\Big)W^{2_\mu^*-2}\omega\right],\\&\omega\in \mathcal{D}^{1,2}(\mathbb{R}^N).
    \end{split}
    \end{equation}
    By a straightforward computation, for all $\omega\in \mathcal{D}^{1,2}(\mathbb{R}^N)$, we get that
    \begin{equation}\label{pfvlnv}
    \begin{split}
    \int_{\mathbb{R}^N}\Big(|x|^{-\mu} \ast (W^{2_\mu^*-1}\omega)\Big)W^{2_\mu^*-1}\omega
    \leq & C(N,\mu) \|W\|^{2(2_{\mu}^\ast-1)}_{L^{2^*}}\|\omega\|^2_{L^{2^\ast}}
    \leq \|\omega\|^2_{\mathcal{D}^{1,2}(\mathbb{R}^N)}.
    \end{split}
    \end{equation}
Analogously, we have that
    \begin{equation}\label{pfvlnv2}
    \begin{split}
    \int_{\mathbb{R}^N}\Big(|x|^{-\mu} \ast W^{2_\mu^*}\Big)W^{2_\mu^*-2}\omega^2
    \leq  \|\omega\|^2_{\mathcal{D}^{1,2}(\mathbb{R}^N)}.
    \end{split}
    \end{equation}
Following the work of Servadei and Valdinoci \cite{SV13} for non-local operators, the eigenvalues of problem (\ref{Pwhlep}) can be defined as follows:

    \begin{definition}\label{defevp}
    The Rayleigh quotient characterization of first eigenvalue implies
    \begin{equation}\label{deffev1}
  \bar{\nu}_1:=\inf_{v\in \mathcal{D}^{1,2}(\mathbb{R}^N)\backslash\{0\}}
    \frac{\int_{\mathbb{R}^N}|\nabla \omega|^2+\int_{\mathbb{R}^N}(|x|^{-\mu} \ast W^{2_\mu^*})\omega^2}
    {\int_{\mathbb{R}^N}(|x|^{-\mu} \ast (W^{2_\mu^*-1}\omega))W^{2_\mu^*-1}\omega+ \int_{\mathbb{R}^N}(|x|^{-\mu} \ast W^{2_\mu^*})\omega^2}
    .
    \end{equation}
In addition, for any $l\in\mathbb{N}$ the eigenvalues can be characterized as follows:
    \begin{equation}\label{deffevk}
   \bar{\nu}_{l+1}:=\inf_{v\in \mathbb{W}_{l+1}\backslash\{0\}}
    \frac{\int_{\mathbb{R}^N}|\nabla \omega|^2+\int_{\mathbb{R}^N}(|x|^{-\mu} \ast W^{2_\mu^*})\omega^2}
    {\int_{\mathbb{R}^N}(|x|^{-\mu} \ast (W^{2_\mu^*-1}\omega))W^{2_\mu^*-1}\omega+ \int_{\mathbb{R}^N}(|x|^{-\mu} \ast W^{2_\mu^*-1})\omega^2}
    ,
    \end{equation}
    where
    \begin{equation}\label{defczs}
    \mathbb{W}_{l+1}:=\left\{\omega\in \mathcal{D}^{1,2}(\mathbb{R}^N): \int_{\mathbb{R}^N}\nabla \omega \cdot \nabla v_j=0,\quad \mbox{for all}\quad j=1,\ldots,l.\right\},
    \end{equation}
    and $v_j$ is the corresponding eigenfunction to $\bar{\nu}_j$.
    \end{definition}

    Choosing $\omega=W$ in (\ref{deffev1}), then we get that
    \begin{equation*}
    \begin{split}
   \bar{\nu}_1
    \leq &  \frac{\int_{\mathbb{R}^N}|\nabla W|^2 +\int_{\mathbb{R}^N}(|x|^{-\mu} \ast W^{2_\mu^*})W^{2_\mu^*}}
    {\int_{\mathbb{R}^N}(|x|^{-\mu} \ast W^{2_\mu^*})W^{2_\mu^*}
    + \int_{\mathbb{R}^N}(|x|^{-\mu} \ast W^{2_\mu^*})W^{2_\mu^*}}=1.
    \end{split}
    \end{equation*}
    Moreover, in view of (\ref{pfvlnv2}), we have that
    \begin{equation*}
    \begin{split}
    \int_{\mathbb{R}^N}(|x|^{-\mu} \ast (W^{2_\mu^*-1}\omega))W^{2_\mu^*-1}\omega
    \leq \|\omega\|^2_{\mathcal{D}^{1,2}(\mathbb{R}^N)},\quad\text{for all}\quad\omega\in \mathcal{D}^{1,2}(\mathbb{R}^N),
    \end{split}
    \end{equation*}
    and the equality holds if and only if $\omega=\alpha W$ with $\alpha\in\mathbb{R}$, which means that $\bar{\nu}_1\geq 1$. Then we obtain that $\bar{\nu}_1=1$ and the corresponding eigenfunction is $\alpha W$ with $\alpha\in\mathbb{R}$, that is $\omega_1=W$.
     Therefore, from Proposition \ref{prondgr} involving the nondegeneracy of $W$, we have the following discrete spectral information of operator $\mathcal{L}[u]$.

    \begin{proposition}\label{propep}
    Let $\bar{\nu}_j$, $j=1,2,\ldots,$ denote the eigenvalues of (\ref{Pwhlep}) in increasing order as in Definition \ref{defevp}. Then operator $\mathcal{L}[u]$ from \eqref{defanndg} has a discrete spectrum $\{\bar{\nu}_j\}_{1}^{\infty}$, with $0<\bar{\nu}_j<\bar{\nu}_{j+1}$ for all $j$, and
     \begin{equation*}
     \begin{split}
     &\bar{\nu}_1=1, \quad \quad X_1=span\left\{W\right\},\\
     \bar{\nu}_2&=2_\mu^*, \quad \quad X_2=span
    \left\{\partial x_1W,\ldots,\partial x_NW,\quad x\cdot\nabla W+\frac{N-2}{2}W\right\},
    \end{split}
     \end{equation*}
     where $X_j$ denotes the eigenfunction space corresponding to $ \bar{\nu}_j$.  Furthermore, $T_w\widetilde{M}=span\left\{X_1\cup X_2\right\}$.
    \end{proposition}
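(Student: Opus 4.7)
The plan is to combine the variational characterisation of Definition \ref{defevp} with the nondegeneracy Proposition \ref{prondgr}. Writing
\[
a(\omega,\psi) = \int_{\mathbb{R}^N}\nabla\omega\cdot\nabla\psi + \int_{\mathbb{R}^N}\bigl(|x|^{-\mu}\ast W^{2^*_\mu}\bigr)W^{2^*_\mu-2}\omega\psi,
\]
\[
b(\omega,\psi) = \int_{\mathbb{R}^N}\bigl(|x|^{-\mu}\ast(W^{2^*_\mu-1}\omega)\bigr)W^{2^*_\mu-1}\psi + \int_{\mathbb{R}^N}\bigl(|x|^{-\mu}\ast W^{2^*_\mu}\bigr)W^{2^*_\mu-2}\omega\psi,
\]
equation \eqref{Pwhlep} reads as the generalised eigenvalue problem $a(\omega,\cdot) = \bar{\nu}\,b(\omega,\cdot)$. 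The forms are continuous and positive by \eqref{pfvlnv}--\eqref{pfvlnv2}, and $b$ is compact with respect to $a$, so the spectrum is discrete and accumulates only at infinity.

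For the first eigenvalue I would test with $\omega = W$: the Euler--Lagrange equation \eqref{ele} gives $a(W,W) = b(W,W) = 2\|\nabla W\|_{L^2}^2$, hence $\bar{\nu}_1 \leq 1$. The reverse inequality follows from
\[
\int_{\mathbb{R}^N}\bigl(|x|^{-\mu}\ast(W^{2^*_\mu-1}\omega)\bigr)W^{2^*_\mu-1}\omega \leq C(N,\mu)\|W\|_{L^{2^*}}^{2(2^*_\mu-1)}\|\omega\|_{L^{2^*}}^2 \leq \|\nabla\omega\|_{L^2}^2,
\]
obtained by HLS and Hölder composed with the Sobolev inequality; the identity $C(N,\mu)\|W\|_{L^{2^*}}^{2(2^*_\mu-1)} = S$ comes from testing \eqref{ele} against $W$ and comparing with HLS. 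Equality forces $W^{2^*_\mu-1}\omega$ to be an HLS extremiser and $\omega$ a Sobolev extremiser, so $X_1 = \mathrm{span}\{W\}$.

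The eigenvalue equation \eqref{Pwhlep} at $\bar{\nu} = 2^*_\mu$ reads
\[
-\Delta\omega = (2^*_\mu-1)\bigl(|x|^{-\mu}\ast W^{2^*_\mu}\bigr)W^{2^*_\mu-2}\omega + 2^*_\mu\bigl(|x|^{-\mu}\ast(W^{2^*_\mu-1}\omega)\bigr)W^{2^*_\mu-1},
\]
which is exactly $L[\omega] = 0$ with $L$ as in Proposition \ref{prondgr}. Hence the $\bar{\nu}=2^*_\mu$ eigenspace equals $\ker L = \mathrm{span}\{\partial_{x_1}W,\dots,\partial_{x_N}W, D_\lambda W\}$, and differentiating \eqref{defU} in $\lambda$ at $\lambda=1$ yields $D_\lambda W = x\cdot\nabla W + \tfrac{N-2}{2}W$. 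Each of these $N+1$ vectors is $\mathcal{D}^{1,2}$-orthogonal to $W$ because $\int|\nabla W[\xi,\lambda]|^2$ is invariant under translations and dilations, so they lie in $\mathbb{W}_2$ from \eqref{defczs} and produce Rayleigh quotient $2^*_\mu$; therefore $\bar{\nu}_2 \leq 2^*_\mu$.

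The genuine obstacle is showing that no eigenvalue lies in the open interval $(1, 2^*_\mu)$. I plan a Morse-index argument: the quadratic form $a - 2^*_\mu b$ coincides with the second variation $E_0''[W]$ of the functional from Remark \ref{remark0}, so by Courant's min-max principle the number of eigenvalues strictly below $2^*_\mu$ equals the Morse index of $E_0$ at $W$. A direct calculation gives $E_0''[W][W,W] = -2(2^*_\mu-1)\|\nabla W\|_{L^2}^2 < 0$, hence Morse index $\geq 1$; the reverse bound uses that $W$ minimises the Sobolev quotient $\|\nabla u\|_{L^2}^2/\|u\|_{NL}^2$, so $E_0''[W]$ is nonnegative on the tangent space to the constraint $\{\|u\|_{NL}=\|W\|_{NL}\}$, and together with Proposition \ref{prondgr} identifying $\ker E_0''[W]$ as $(N+1)$-dimensional this forces Morse index $=1$. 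Consequently $\bar{\nu}_2 = 2^*_\mu$ with multiplicity $N+1$. The relation $T_W\widetilde{M} = \mathrm{span}(X_1\cup X_2)$ is immediate from the parameterisation $(c,\xi,\lambda)\mapsto cW[\xi,\lambda]$. An alternative to the Morse step is the spherical-harmonic decomposition of \cite{XLi}, which makes the spectrum explicit sector by sector at the cost of substantial ODE analysis.
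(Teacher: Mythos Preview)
Your argument is correct and in fact more complete than what the paper records. The paper establishes $\bar\nu_1=1$ with $X_1=\mathrm{span}\{W\}$ by exactly your HLS--Sobolev chain (this is the discussion preceding the proposition, culminating in equation \eqref{pfvlnv}), and then simply asserts the remaining spectral information ``from Proposition~\ref{prondgr} involving the nondegeneracy of $W$'' without further justification. In particular, the paper does not address the gap you correctly flag as the genuine obstacle: ruling out eigenvalues in the open interval $(1,2^*_\mu)$.

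Your Morse-index argument fills this gap cleanly. The identification $a-2^*_\mu b = E_0''[W]$ is correct (both sides equal $\|\nabla\omega\|^2-(2^*_\mu-1)\int(|x|^{-\mu}\ast W^{2^*_\mu})W^{2^*_\mu-2}\omega^2-2^*_\mu\int(|x|^{-\mu}\ast W^{2^*_\mu-1}\omega)W^{2^*_\mu-1}\omega$), and the constrained-minimiser step works because the Lagrangian for minimising $\|\nabla u\|^2$ on $\{\|u\|_{NL}^{2\cdot 2^*_\mu}=\mathrm{const}\}$ has second variation $2E_0''[W]$ once the multiplier is fixed by \eqref{ele}; the tangent space to this constraint at $W$ is exactly the $\mathcal{D}^{1,2}$-orthogonal complement of $W$, which has codimension one, forcing the Morse index to be at most one. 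This is a self-contained route that avoids importing the full spherical-harmonic analysis of \cite{XLi}. One minor point worth making explicit in a final write-up: the compactness of $b$ relative to $a$ (needed for discreteness of the spectrum) follows from the decay of the weight $W^{2^*_\mu-2}$ and of the convolution kernel $W^{2^*_\mu-1}$, via the compact embedding of $\mathcal{D}^{1,2}$ into the corresponding weighted $L^2$ spaces.
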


Let $\rho=u-\sigma$ where $\sigma=\sum\limits_{i=1}^{\kappa}\alpha_{i}W[\xi_i,\lambda_i]$ be the linear combination of Talenti bubbles that is closet to $u$ in the $\mathcal{D}^{1,2}$-norm, such that,
    \begin{equation*}
    \big\|\nabla u-\nabla\sigma\big\|_{L^2}=\min_{\substack{\bar{\alpha}_1,\dots,\bar{\alpha}_\kappa\in\mathbb{R},\\ \bar{\xi}_1,\dots,\bar{\xi}_\kappa\in\mathbb{R}^N,\\ \bar{\lambda}_1,\dots,\bar{\lambda}_\kappa\in\mathbb{R}}}\bigg\|\nabla u-\nabla\bigg(\sum_{i=1}^{\kappa}\bar{\alpha}_iW[\bar{\xi}_i,\bar{\lambda}_{i}]\bigg)\bigg\|_{L^2}.
    \end{equation*}
Here we note that the family together with the coefficients $\alpha_1,\cdots,\alpha_{\kappa}\in\mathbb{R}$ is $\delta$-interacting if (\ref{interacting}) holds, and we have that
\begin{equation*}
\max\limits_{1\leq i\leq\kappa}|\alpha_i-1|\leq\delta.
\end{equation*}
Moreover, for any $1\leq i\leq\kappa$, $\rho$ also satisfies the following orthogonality conditions:
\begin{equation*}
\int_{\mathbb{R}^N}\nabla\rho\nabla W[\xi_i,\lambda_i]=0,\quad\int_{\mathbb{R}^N}\nabla\rho\nabla \frac{\partial W[\xi_i,\lambda_i]}{\partial\lambda}=0,\quad\int_{\mathbb{R}^N}\nabla\rho\nabla \frac{\partial W[\xi_i,\lambda_i]}{\partial\xi_i}=0 \quad \mbox{for any}\quad 1\leq i\leq N.
\end{equation*}
In the sequel, we write $W_i$ instead of $W[\xi_i,\lambda_i]$ for simplicity. In virtue of the eigenvalue problem, we know that the functions $W_i$, $\frac{\partial W_i}{\partial\lambda}$ and $\frac{\partial W_i}{\partial\xi_i}$ are eigenfunctions for the eigenvalue problem
\begin{eqnarray}\label{defanndg}
\left\{ \arraycolsep=0.0pt
\begin{array}{ll}
& \mathcal{L}[u]=\bar{\nu}\mathcal{R}[u],\quad\mbox{in}\quad\mathbb{R}^N,\\[3mm]
& u\in \mathcal{D}^{1,2}(\mathbb{R}^N),
\end{array}
\right.
\end{eqnarray}
where we denote
\begin{equation*}
\mathcal{L}[u]:=-\Delta u+\Big(|x|^{-\mu} \ast W_i^{2_\mu^*}\Big)W_i^{2_\mu^*-2}u
\end{equation*}
and
\begin{equation*}
\mathcal{R}[u]:=\Big(|x|^{-\mu} \ast \big(W_i^{2_\mu^*-1}u\big)\Big)W_i^{2_\mu^*-1}
 +\Big(|x|^{-\mu} \ast W_i^{2_\mu^*}\Big)W_i^{2_\mu^*-2}u.
\end{equation*}
Then the orthogonal conditions are equivalent to the following
\begin{equation}\label{EP1}
\int_{\mathbb{R}^N}\Big(|x|^{-\mu} \ast \big(W_i^{2_\mu^*-1}\rho\big)\Big)W_i^{2_\mu^*}=0,
\end{equation}
\begin{equation}\label{EP2}
   \begin{split}
&2_\mu^*\int_{\mathbb{R}^N}\Big(|x|^{-\mu} \ast \big(W_{i}^{2_\mu^*-1}\rho\big)\Big)W_i^{2_\mu^*-1}\frac{\partial W_i}{\partial\lambda}
+(2_\mu^*-1)\int_{\mathbb{R}^N}\Big(|x|^{-\mu} \ast W_{i}^{2_\mu^*}\Big)W_{i}^{2_\mu^*-2}\frac{\partial W_{i}}{\partial\lambda}\rho=0
   \end{split}
\end{equation}
and
\begin{equation}\label{EP3}
2_\mu^*\int_{\mathbb{R}^N}\Big(|x|^{-\mu} \ast \big(W_i^{2_\mu^*-1}\rho\big)\Big)W_i^{2_\mu^*-1}\frac{\partial W_i}{\partial\xi_i}+(2_\mu^*-1)\int_{\mathbb{R}^N}\Big(|x|^{-\mu} \ast W_i^{2_\mu^*}\Big)W_i^{2_\mu^*-2}\frac{\partial W_i}{\partial x_i}\rho=0,
\end{equation}
for any $1\leq i\leq N$.

The following inequality  is an immediate consequence of Proposition \ref{propep}, which will be used in the proof of Proposition \ref{estimate1}.
\begin{proposition}\label{pro-4.1}
For every $\varphi\in (\mathcal{\widetilde{Z}})^{\perp}$ is almost orthogonal to the functions $W_i$, $\frac{\partial W_i}{\partial\lambda}$ and $\frac{\partial W_i}{\partial\xi_i}$, we have
\begin{equation*}
\begin{split}
\int_{\mathbb{R}^N}\Big[\Big(|x|^{-\mu} \ast \big(W_i^{2_\mu^*-1}\varphi\big)\Big)W_i^{2_\mu^*-1}\varphi
 &+\Big(|x|^{-\mu} \ast W_i^{2_\mu^*}\Big)W_i^{2_\mu^*-2}\varphi^2\Big]
 \\&\leq\frac{1}{\hbar}\Big[\int_{\mathbb{R}^N}|\nabla\varphi|^2+\int_{\mathbb{R}^N}\Big(|x|^{-\mu} \ast W_i^{2_\mu^*}\Big)W_i^{2_\mu^*-2}\varphi^2\Big],
\end{split}
\end{equation*}
where $\hbar>2_{\mu}^{*}$ is the largest eigenvalue of $\frac{\mathcal{L}[u]}{\mathcal{R}[u]}$ and the set $\mathcal{\widetilde{Z}}$ is the eigenfunction space associated to eigenvalues of $\frac{\mathcal{L}[u]}{\mathcal{R}[u]}$.
\end{proposition}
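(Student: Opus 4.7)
\textbf{Proof proposal for Proposition~\ref{pro-4.1}.} My plan is to obtain the inequality as a direct consequence of the min--max characterization of the third eigenvalue of the linearised problem \eqref{Pwhlep}, combined with the discrete spectral information furnished by Proposition~\ref{propep}.

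First I would identify the role of $\hbar$. By Proposition~\ref{propep} the eigenvalues of $\mathcal{L}[u]=\bar\nu\,\mathcal{R}[u]$ form a strictly increasing sequence with $\bar\nu_1=1$ and $\bar\nu_2=2_\mu^{*}$, whose combined eigenspaces are $X_1\oplus X_2=T_{W_i}\widetilde{\mathcal{M}}=\mathrm{span}\{W_i,\,\partial_\lambda W_i,\,\partial_{\xi_i} W_i\}$. This is precisely the space $\widetilde{\mathcal{Z}}$ appearing in the statement, so we set $\hbar:=\bar\nu_3>2_\mu^{*}$.

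Next I would show that $\varphi\in(\widetilde{\mathcal{Z}})^{\perp}$, understood in the $\mathcal{D}^{1,2}$ sense, places $\varphi$ in the admissible space $\mathbb{W}_3$ of \eqref{defczs} (up to negligible error in the $\delta$-interacting regime). The bridge is the eigenvalue equation itself: testing \eqref{Pwhlep} for an eigenpair $(\bar\nu_j,v_j)$ against $\varphi$ gives
\begin{equation*}
\int_{\mathbb{R}^N}\nabla \varphi\cdot\nabla v_j \;=\; \bar\nu_j\!\int_{\mathbb{R}^N}\mathcal{R}[v_j]\,\varphi \;-\; \int_{\mathbb{R}^N}\!\bigl(|x|^{-\mu}\ast W_i^{2_\mu^{*}}\bigr)W_i^{2_\mu^{*}-2}\,v_j\,\varphi,
\end{equation*}
so the exact orthogonality identities \eqref{EP1}--\eqref{EP3} satisfied by $\rho$ translate into $\int\nabla\varphi\cdot\nabla v_j=0$ for every $v_j\in\widetilde{\mathcal{Z}}$, i.e.\ $\varphi\in\mathbb{W}_3$. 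The "almost orthogonal" qualifier reflects that in the multi-bubble $\delta$-interacting setting the generators $W_i,\partial_\lambda W_i,\partial_{\xi_i}W_i$ associated with different bubbles are only nearly $\mathcal{D}^{1,2}$-orthogonal; the resulting cross terms are $O(\delta)$ and can be absorbed without affecting the leading constant.

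Once $\varphi\in\mathbb{W}_3$, the variational characterization \eqref{deffevk} with $l+1=3$ gives
\begin{equation*}
\hbar\;\leq\;\frac{\displaystyle\int_{\mathbb{R}^N}|\nabla\varphi|^2+\int_{\mathbb{R}^N}\bigl(|x|^{-\mu}\ast W_i^{2_\mu^{*}}\bigr)W_i^{2_\mu^{*}-2}\varphi^{2}}{\displaystyle\int_{\mathbb{R}^N}\bigl(|x|^{-\mu}\ast(W_i^{2_\mu^{*}-1}\varphi)\bigr)W_i^{2_\mu^{*}-1}\varphi+\int_{\mathbb{R}^N}\bigl(|x|^{-\mu}\ast W_i^{2_\mu^{*}}\bigr)W_i^{2_\mu^{*}-2}\varphi^{2}}.
\end{equation*}
Positivity and finiteness of the denominator are guaranteed by \eqref{pfvlnv}--\eqref{pfvlnv2}. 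Cross-multiplying and dividing by $\hbar$ yields exactly the inequality in the statement with constant $1/\hbar$.

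The delicate point I expect to handle carefully is the identification $(\widetilde{\mathcal{Z}})^{\perp}\approx\mathbb{W}_3$: in the multi-bubble regime, $\mathcal{D}^{1,2}$-orthogonality to the generators of the tangent space only mimics the bilinear-form orthogonality required by \eqref{defczs} up to cross-bubble interactions of size controlled by \eqref{interacting}. The spectral gap $\hbar-2_\mu^{*}>0$ provided by Proposition~\ref{propep} is what allows these small perturbations to be absorbed while still producing a fixed constant $1/\hbar$; this explains why the statement is phrased with $\hbar$ depending only on $(N,\mu)$ and not on $\delta$.
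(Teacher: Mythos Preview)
Your proposal is correct and matches the paper's approach: the paper states the proposition as ``an immediate consequence of Proposition~\ref{propep}'' without further proof, and the immediate consequence in question is precisely the Rayleigh-quotient characterization \eqref{deffevk} applied at level $l+1=3$, exactly as you outline. Your identification $\hbar=\bar\nu_3$ and the cross-multiplication step are the whole argument.

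One small point: your bridge in step~2 (using the eigenvalue equation to pass from $(\widetilde{\mathcal{Z}})^{\perp}$ to $\mathbb{W}_3$) is unnecessary. Since $\widetilde{\mathcal{Z}}=X_1\oplus X_2$ is spanned by the eigenfunctions $v_j$ for $\bar\nu_1,\bar\nu_2$, the assumption $\varphi\in(\widetilde{\mathcal{Z}})^{\perp}$ in the $\mathcal{D}^{1,2}$ sense is \emph{by definition} the condition $\int\nabla\varphi\cdot\nabla v_j=0$ defining $\mathbb{W}_3$ in \eqref{defczs}; no detour through $\mathcal{R}$ is needed. The ``almost orthogonal'' qualifier in the statement is indeed awkward wording: the paper handles the approximate case not inside this proposition but in the proof of Proposition~\ref{estimate1}, where $\Psi_i\rho$ is shown to be nearly orthogonal and the resulting $o(1)\|\nabla\rho\|_{L^2}^2$ error is carried as a separate remainder. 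So your multi-bubble/$\delta$-interaction discussion belongs to that application rather than to the proof of the proposition itself.
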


\section{Stability through Euler-Lagrange equation}\label{sectrt}

In this section we prove Theorems \ref{F4} and \ref{Figalli}.

\subsection{ A stability of profile decompositions}
Inspired by the well-known results of profile decompositions to (\ref{bec}) by Struwe in \cite{Struwe-1984}, we are in a position to prove a profile decomposition of the nonlocal Hartree type equation \eqref{ele} for nonnegative functions.

\textbf{Proof the Theorem~\ref{F4}}.
We define the concentration function by
	$$
	Q_{m}(r):=\sup_{y\in\mathbb{R}^N}\int_{B_{r}(y)}|w_{m}(x)|^{2}.
	$$
Since
$$(\kappa-\frac{1}{2})S_{HL}^{\frac{2N-\mu}{N+2-\mu}}\leq\|w_m|_{\mathcal{D}^{1,2}(\mathbb{R}^N)}^2\leq(\kappa+\frac{1}{2})S_{HL}^{\frac{2N-\mu}{N+2-\mu}}$$
for some $\kappa\in\mathbb{N}$, we choose $r_m>0$, and $y_m\in\mathbb{R}^N$ such that
$$
	Q_{m}(r)=\int_{B_{r_m}(y_m)}|w_{m}|^{2}=\frac{1}{2L_{R_{\epsilon}}}S_{HL}^{\frac{2N-\mu}{N+2-\mu}},
	$$
where $L_{R_{\epsilon}}$ is the number such that the ball $B_{2R_{\varepsilon}}(0)$ is covered by $L_{R_{\epsilon}}$ balls with radius $R_{\epsilon}$.
Let
$$w_m\mapsto \tilde{w}_m(x)=\Big(\frac{r_n}{R_{\epsilon}}\Big)^{(N-2)/2}w_m\Big(\frac{r_n}{R_\epsilon}x\Big)$$
such that
\begin{equation}\label{Q-1}
\widetilde{Q}_{m}(R_{\epsilon}):=\sup_{y\in\mathbb{R}^N}\int_{B_{R_{\epsilon}}(y)}|\nabla\tilde{w}_{m}|^{2}=\int_{B_{R_\epsilon}(r_m^{-1}R_{\epsilon}y_m)}|\nabla\tilde{w}_{m}|^{2}=\frac{1}{2L_{R_{\epsilon}}}S_{HL}^{\frac{2N-\mu}{N+2-\mu}}.
\end{equation}
By invariance of the $\mathcal{D}^{1,2}(\mathbb{R}^N)$ norms under translation and dilation, we get that
$$\|\tilde{w}_m\|_{\mathcal{D}^{1,2}(\mathbb{R}^N)}=\|w_m\|_{\mathcal{D}^{1,2}(\mathbb{R}^N)}$$
and
\begin{equation*} \int_{\mathbb{R}^N}\int_{\mathbb{R}^N}\frac{|\tilde{w}_{m}(x)|^{2_{\mu}^{\ast}}|\tilde{w}_{m}(y)|^{2_{\mu}^{\ast}}}
	{|x-y|^{\mu}}=\int_{\mathbb{R}^N}\int_{\mathbb{R}^N}\frac{|w_{m}(x)|^{2_{\mu}^{\ast}}
		|w_{m}(y)|^{2_{\mu}^{\ast}}}
	{|x-y|^{\mu}}.
\end{equation*}
Hence we may assume that there exists $w^0\in \mathcal{D}^{1,2}(\mathbb{R}^N)$ such that
	$$
\tilde{w}_m\rightharpoonup w^0,~~~\text{weakly in}~\in \mathcal{D}^{1,2}(\mathbb{R}^N) \quad\text{as}~~m\rightarrow\infty.
	$$
For any $y\in\mathbb{R}^N$, let $\psi_{m}$ be given by
\begin{equation*}
		\psi_{m}=(\tilde{w}_m-w^{0})\tilde{\xi},
\end{equation*}
where $\tilde{\xi}\in \mathcal{C}_{c}^{\infty}$ with $\tilde{\xi}\equiv1$ for $x\in B_{R_{\epsilon}}(y)$ and $\tilde{\xi}\equiv0$ in $B_{2R_{\epsilon}}^{c}(y)$. Then we have
\begin{equation*}
\|\psi_m\|_{\mathcal{D}^{1,2}(\mathbb{R}^N)}^2\leq C\|\tilde{w}_m- w^0\|_{\mathcal{D}^{1,2}(\mathbb{R}^N)}^2+C\int_{B_{2R_{\epsilon}}(y)\setminus B_{R_{\epsilon}}(y)}|\tilde{w}_m- w^0|^2\leq C
\end{equation*}
and
\begin{equation*}
\int_{B_{2R_{\epsilon}}(y)\setminus B_{R_{\epsilon}}(y)}|\tilde{w}_m-
 w^0|^2\rightarrow0\quad\text{as}~~m\rightarrow\infty.
\end{equation*}
 Combining $DE_0(w_m; \mathbb{R}^N)\rightarrow0$ in $(\mathcal{D}^{1,2}(\mathbb{R}^N))^{-1}$ as $m\rightarrow\infty$, $DE_0(w^0; \mathbb{R}^N)=0$, the Brezis-Lieb lemma and HLS inequality we deduce that
\begin{equation}\label{Q-2}
\begin{split}
o_m(1)&=\left\langle\psi_m\xi, DE_0(\tilde{w}_m; \mathbb{R}^N)-DE_0(w^0; \mathbb{R}^N) \right\rangle\\&
=\int_{\mathbb{R}^N}\nabla(\tilde{w}_m-w^0)\nabla(\psi_{m}\tilde{\xi})-\int_{\mathbb{R}^N}\int_{\mathbb{R}^N}\Big[\frac{|\tilde{w}_m(y)|^{2_{\mu}^{\ast}}
		|\tilde{w}_m(x)|^{2_{\mu}^{\ast}-1}\psi_{m}\tilde{\xi}} {|x-y|^{\mu}}-\frac{|w^0(y)|^{2_{\mu}^{\ast}}
		|w^0(x)|^{2_{\mu}^{\ast}-1}\psi_{m}\tilde{\xi}} {|x-y|^{\mu}}\Big]\\&
=\int_{\mathbb{R}^N}|\nabla\psi_{m}|^2-\int_{\mathbb{R}^N}\int_{\mathbb{R}^N}\frac{|(\tilde{w}_m-w^0)(y)|^{2_{\mu}^{\ast}}|(\tilde{w}_m-w^0)(x)|^{2_{\mu}^{\ast}-2}|\psi_{m}|^2}
	{|x-y|^{\mu}}+o_m(1)\\&
=\|\psi_{m}\|_{\mathcal{D}^{1,2}(\mathbb{R}^N)}^2-\int_{B_{R_{\epsilon}}}\int_{B_{R_{\epsilon}}}\frac{|(\tilde{w}_m-w^0)(y)|^{2_{\mu}^{\ast}}|(\tilde{w}_m-w^0)(x)|^{2_{\mu}^{\ast}-2}|\psi_{m}|^2}
	{|x-y|^{\mu}}+o_m(1)\\&
\geq\|\psi_{m}\|_{\mathcal{D}^{1,2}(\mathbb{R}^N)}^2-\int_{\mathbb{R}^N}\int_{\mathbb{R}^N}\frac{|\psi_{m}(y)|^{2_{\mu}^{\ast}}|\psi_{m}(x)|^{2_{\mu}^{\ast}}}
	{|x-y|^{\mu}}+o_m(1)
\end{split}
\end{equation}
as $m\rightarrow\infty$. Moreover,  HLS and Sobolev inequalities give that
\begin{equation}\label{Q-3} \|\psi_{m}\|_{\mathcal{D}^{1,2}(\mathbb{R}^N)}^2-\int_{\mathbb{R}^N}\int_{\mathbb{R}^N}\frac{|\psi_{m}(y)|^{2_{\mu}^{\ast}}|\psi_{m}(x)|^{2_{\mu}^{\ast}}}
	{|x-y|^{\mu}} \geq\big\|\psi_{m}\big\|_{\mathcal{D}^{1,2}(\mathbb{R}^N)}^{2}\Big(1-S_{HL}^{-\frac{2N-\mu}{N-2}}\big\|\psi_{m}\big\|_{\mathcal{D}^{1,2}(\mathbb{R}^N)}^{\frac{2(N-\mu+2)}{N-2}}\Big).
\end{equation}
On the other hand, we note that
$$
\int_{\mathbb{R}^N}|\nabla\psi_{m}|^2=\int_{B_{R_{\epsilon}}}|\nabla(\tilde{w}_m-w^{0})|^2+o_{m}(1)\leq \int_{B_{2R_{\epsilon}}}|\nabla\tilde{w}_m|^2.
$$
It follows from (\ref{Q-1}), (\ref{Q-2}) and (\ref{Q-3}) that $\psi_{m}\rightarrow0$ in $\mathcal{D}^{1,2}(\mathbb{R}^N)$, that is, $\tilde{w}_m\rightarrow w^0$ in $\mathcal{D}^{1,2}(B_{R_{0}}(y))$ for any $y\in\mathbb{R}^N$. Hence by standard covering argument, we obtain
$\tilde{w}_m\rightarrow w^{0}$ in $\mathcal{D}^{1,2}_{loc}(\mathbb{R}^N)$ as $m\rightarrow\infty$. Clearly, we have that
$$
\int_{B_{R_{\epsilon}}(x)}|w^{0}|^{2}=\frac{1}{2L_{R_{\epsilon}}}S_{HL}^{\frac{2N-\mu}{N+2-\mu}}>0,
$$
hence $w^{0}\not\equiv0$.
Since $w^{0}\geq0$, we have $w^{0}(x)=W[\xi,\lambda](x)$, which means that
 $$w_m\rightharpoonup \Big(\frac{r_m}{R_\epsilon}\Big)^{\frac{N-2}{2}}W[\xi,\lambda]\Big(\frac{r_m}{R_\epsilon}x\Big)$$  in~$\in \mathcal{D}^{1,2}(\mathbb{R}^N)$ as $m\rightarrow\infty$.
Hence for some $r_{m,1}>$ and $\epsilon_1>0$, we have that
\begin{equation*}
w_m\rightharpoonup \Big(\frac{r_m}{R_\epsilon}\Big)^{\frac{N-2}{2}}W[\xi,\lambda]\Big(\frac{r_m}{R_\epsilon}x\Big)+\Big(\frac{r_mr_{m,1}}{R_\epsilon R_{\epsilon_1}}\Big)^{\frac{N-2}{2}}W[\xi,\lambda]\Big(\frac{r_mr_{m,1}}{R_{\epsilon_1}R_\epsilon}x\Big),
\end{equation*}
weakly in~$\in \mathcal{D}^{1,2}(\mathbb{R}^N)$ as $m\rightarrow\infty$.
 In view of $W$ is the unique nonnegative solution of \eqref{ele} and iterating the above process $\kappa$ times, then the result easily follows.

\qed
\subsection{A quantitative version of the stability}
The proof of Theorem~\ref{Figalli} relies on the following several Propositions. The proof of Propositions \ref{estimate1} and \ref{estimate2} will be postponed to Section \ref{section5}.
\begin{proposition}\label{FPU1} Let $N\geq3$, $W[\xi_i,\lambda_i]$ and $W[\xi_j,\lambda_j]$ be two bubbles. Then, for any fixed $\varepsilon>0$ and any nonnegative exponents such that $p+q=2^\ast$, it holds that
\begin{equation*}
\int_{\mathbb{R}^N}W[\xi_i,\lambda_i]^{p}W[\xi_j,\lambda_j]^q\approx
\begin{cases}
Q^{\min(p,q)},\quad\mbox{if}\quad|p-q|\geq\varepsilon,\\
Q^{\frac{N}{N-2}}\log(\frac{1}{Q}),\quad\mbox{if}\quad p=q,
\end{cases}
\end{equation*}
where the quantity
\begin{equation*}
		Q:=Q(\xi_i,\xi_j,\lambda_i,\lambda_j)=\min\Big(\frac{\lambda_i}{\lambda_j}+\frac{\lambda_j}{\lambda_i}+\lambda_i\lambda_j|\xi_i-\xi_j|^2\Big)^{-\frac{N-2}{2}}.
	\end{equation*}
\end{proposition}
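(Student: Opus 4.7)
The plan is to use the scaling and translation invariance of the Talenti family, combined with the $L^{2^{\ast}}$--conformal rescaling $W[\xi,\lambda](x) = \lambda^{(N-2)/2}W[0,1](\lambda(x-\xi))$, to normalise the configuration to $W[\xi_i,\lambda_i]=W[0,1]$ and $W[\xi_j,\lambda_j]=W[\xi,\lambda]$ with $\lambda\geq 1$ (after a possible swap between the two bubbles). Under this reduction the interaction parameter simplifies to $Q\approx \lambda^{-(N-2)/2}(1+|\xi|^2)^{-(N-2)/2}$, and, setting $\alpha:=p(N-2)/2$ and $\beta:=q(N-2)/2$ so that $\alpha+\beta=N$, the integral to evaluate becomes (up to a dimensional constant)
\begin{equation*}
I_{p,q}\;=\;\int_{\mathbb{R}^N}\frac{1}{(1+|x|^2)^{\alpha}}\Bigl(\frac{\lambda}{1+\lambda^2|x-\xi|^2}\Bigr)^{\beta}\,dx.
\end{equation*}

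Next I would split $\mathbb{R}^N$ into the core of the first bubble $A:=\{|x|\leq 1\}$, the core of the second bubble $B:=\{|x-\xi|\leq 1/\lambda\}$, and the complement $C$. On $A$ the first factor is of order one while, since $\lambda\geq 1$, the second is of order $\lambda^{-\beta}(1+|\xi|^2)^{-\beta}\approx Q^{q}$, so the contribution of $A$ is $\approx Q^{q}$ (the case $|\xi|$ bounded requires only cosmetic modifications). A symmetric computation on $B$, where the second factor reaches its peak and the first is $\approx(1+|\xi|^2)^{-\alpha}$, yields a contribution of order $Q^{p}$. One of these two regions dominates and produces exactly $Q^{\min(p,q)}$. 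On the outer region $C$ I would use a dyadic annular decomposition in both $|x|$ and $|x-\xi|$: the integrand decays like $\lambda^{-\beta}|x|^{-2\alpha}|x-\xi|^{-2\beta}$ away from both cores, and the identity $\alpha+\beta=N$ makes the associated sums geometric, so they are controlled by $\max(Q^{p},Q^{q})$ and hence negligible compared to $Q^{\min(p,q)}$. The matching lower bound comes from restricting the integration to a small fixed ball inside the core carrying the larger exponent, where both factors admit pointwise lower bounds of the claimed order.

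The borderline case $p=q=2^{\ast}/2$ (equivalently $\alpha=\beta=N/2$) is the only place where the logarithmic factor emerges and is the main subtlety: here the two cores $A$ and $B$ both contribute the same order $Q^{N/(N-2)}$, while the integrand on the intermediate annular region $\lambda^{-1}\lesssim|x-\xi|\lesssim 1$ (equivalently $1\lesssim|x|\lesssim\lambda^{1/2}(1+|\xi|^2)^{1/2}$) becomes \emph{scale-invariant}. Each dyadic shell then contributes a constant multiple of $Q^{N/(N-2)}$, and the number of such shells is of order $\log\lambda+\log(1+|\xi|^2)\approx\log(1/Q)$, which accumulates to the announced upper bound $Q^{N/(N-2)}\log(1/Q)$; a matching pointwise lower bound on the same annuli delivers the reverse inequality. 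The principal obstacle in carrying out the proof is the careful bookkeeping on region $C$ and at the transition between the two regimes $|p-q|\geq\varepsilon$ and $p=q$: the identity $\alpha+\beta=N$ places the integrand exactly at the critical scaling, so the dyadic sums converge geometrically when $|p-q|\geq\varepsilon$ but degenerate to a harmonic sum — producing the logarithmic correction — precisely at $\alpha=\beta=N/2$.
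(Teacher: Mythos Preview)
Your approach is correct and is essentially the standard argument. The paper itself does not give a proof: it simply writes ``It is similar to that of Proposition~B.2 in \cite{FG20}, so is omitted.'' Your normalisation via the conformal invariance of the $L^{2^\ast}$ norm, the splitting into the two bubble cores and the outer region, and the dyadic bookkeeping (with the logarithmic degeneration at $p=q$) is precisely the route taken in that reference, so there is nothing to compare beyond noting that you have supplied the details the paper chose to suppress.
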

\begin{proof}
It is similar to that of Proposition B.2 in \cite{FG20}, so is omitted.
\end{proof}
\begin{proposition}\label{estimate1}
Let $N\geq3$ and $\kappa\in\mathbb{N}$. There exists a positive constant $\delta=\delta(N,\kappa)>0$ such that if $\sigma=\sum\limits_{i=1}^{\kappa}\alpha_iW_i$ is a linear combination of $\delta$-interacting Talenti bubbles and $\rho\in\mathcal{D}^{1,2}(\mathbb{R}^N)$ satisfies \eqref{EP1}, \eqref{EP2} and \eqref{EP3}.
Then we have that
\begin{equation*}
\begin{split}
(2_{\mu}^{\ast}-1)\sum\limits_{i=1}^ {\kappa}\big|\alpha_i\big|^{2_{\mu}^{\ast}}&\int_{\mathbb{R}^N}\Big(\frac{1}{|x|^{\mu}}\ast W_i^{2_{\mu}^{\ast}}\Big)\sigma^{2_{\mu}^{\ast}-2}\rho^2\\&
+2_{\mu}^{\ast}\sum\limits_{i=1}^{\kappa}\big|\alpha_i\big|^{2_{\mu}^{\ast}-1}\int_{\mathbb{R}^N}\Big(\frac{1}{|x|^{\mu}}\ast\big(\sigma^{2_{\mu}^{\ast}-1}\rho\big)\Big)W_i^{2_{\mu}^{\ast}-1}\rho
\leq\nu\int_{\mathbb{R}^N}|\nabla\rho|^2,
\end{split}
\end{equation*}
where $\nu$ is a constant strictly less than $1$ depending only on $N$ and $\kappa$.
\end{proposition}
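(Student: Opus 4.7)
\textbf{Plan for the proof of Proposition \ref{estimate1}.}

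The plan is to reduce the multi-bubble spectral inequality to $\kappa$ copies of the single-bubble spectral gap provided by Proposition \ref{pro-4.1}, treating the discrepancy between $\sigma$ and the individual bubbles as an error term absorbed via Proposition \ref{FPU1} and the $\delta$-interaction assumption. The starting observation is that, for each fixed $i$, the orthogonality conditions \eqref{EP1}--\eqref{EP3} are exactly the conditions that $\rho$ be orthogonal, in the bilinear form $\int \varphi\,\mathcal{R}[\varphi]$ associated with $W_i$, to the first two eigenspaces $X_1(W_i)=\mathrm{span}\{W_i\}$ and $X_2(W_i)=\mathrm{span}\{D_\lambda W_i,\partial_{\xi_i}W_i\}$ identified in Proposition \ref{propep}. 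Consequently Proposition \ref{pro-4.1} applies with a constant $\hbar > 2_\mu^{\ast}$ (the third eigenvalue), and after multiplying by $2_\mu^{\ast}$ and using the elementary inequality $2_\mu^{\ast}(1-1/\hbar) \geq 2_\mu^{\ast}-1$, I obtain for each $i$ a single-bubble spectral gap
\[
(2_\mu^{\ast}-1)\!\int\!\Big(\tfrac{1}{|x|^{\mu}}\ast W_i^{2_\mu^{\ast}}\Big)W_i^{2_\mu^{\ast}-2}\rho^{2}
+2_\mu^{\ast}\!\int\!\Big(\tfrac{1}{|x|^{\mu}}\ast(W_i^{2_\mu^{\ast}-1}\rho)\Big)W_i^{2_\mu^{\ast}-1}\rho
\leq \frac{2_\mu^{\ast}}{\hbar}\int|\nabla \rho|^{2}.
\]

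Next I would replace $\sigma$ by the contributions of the individual bubbles. In the region where $W_i$ dominates one has $\sigma \approx \alpha_i W_i$, while on the influence region of a different bubble $W_j$ the weight $W_i^{2_\mu^{\ast}-2}(|x|^{-\mu}\ast W_i^{2_\mu^{\ast}})$ is small. Writing $\sigma^{2_\mu^{\ast}-2}=|\alpha_i|^{2_\mu^{\ast}-2}W_i^{2_\mu^{\ast}-2}+(\sigma^{2_\mu^{\ast}-2}-|\alpha_i|^{2_\mu^{\ast}-2}W_i^{2_\mu^{\ast}-2})$ and similarly for $\sigma^{2_\mu^{\ast}-1}$, and using $|\alpha_i|^{2_\mu^{\ast}}|\alpha_i|^{2_\mu^{\ast}-2}=|\alpha_i|^{2(2_\mu^{\ast}-1)}$, the left-hand side of the proposition becomes the sum of the weighted single-bubble forms plus cross terms of the form $\int W_i^{a}W_j^{b}\rho^{2}$ and $\int (|x|^{-\mu}\ast(W_j^{2_\mu^{\ast}-1}\rho))W_i^{2_\mu^{\ast}-1}\rho$ for $i \neq j$. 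These are controlled by Proposition \ref{FPU1} combined with the Hardy--Littlewood--Sobolev and Hölder inequalities, yielding a bound of the form $C(N,\kappa)\,Q^{\gamma}\|\rho\|_{\mathcal{D}^{1,2}}^{2}$ for some $\gamma>0$. Since $Q\leq\delta$ and, by the optimality of $\sigma$, $|\alpha_i-1|\lesssim\delta$ and hence $|\alpha_i|^{2(2_\mu^{\ast}-1)}\leq 1+C\delta$, this gives the desired error estimate.

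To avoid picking up a prohibitive factor $\kappa$ from the naive summation of $\kappa$ single-bubble inequalities, I would carry out the single-bubble step after localizing $\rho$: introduce a partition of unity $\{\chi_i\}$ adapted to the scales $(\xi_i,\lambda_i)$ so that $\mathrm{supp}(\chi_i)$ lies in the region where $W_i$ dominates and $\sum_i\chi_i^{2}\equiv 1$, set $\rho_i=\chi_i\rho$, and verify that
\[
\sum_i\int|\nabla \rho_i|^{2}\leq \int|\nabla\rho|^{2}+O(\delta^{\gamma})\|\rho\|_{\mathcal{D}^{1,2}}^{2},
\]
while the weights $W_i^{2_\mu^{\ast}-2}(|x|^{-\mu}\ast W_i^{2_\mu^{\ast}})$ are supported essentially where $\chi_i\equiv 1$, so that replacing $\rho$ by $\rho_i$ in the $i$-th summand costs only a $Q^{\gamma}$-type error. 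The orthogonality conditions \eqref{EP1}--\eqref{EP3} transfer to $\rho_i$ up to $O(\delta^{\gamma})\|\rho\|_{\mathcal{D}^{1,2}}^{2}$ corrections, so Proposition \ref{pro-4.1} applies to each $\rho_i$ against $W_i$. Combining everything yields $\mathrm{LHS}\leq(2_\mu^{\ast}/\hbar+C(N,\kappa)\delta^{\gamma})\int|\nabla\rho|^{2}$, and $\nu:=2_\mu^{\ast}/\hbar+C(N,\kappa)\delta^{\gamma}<1$ for $\delta=\delta(N,\kappa)$ small enough, as required.

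The main obstacle I expect is this localization step in the nonlocal setting. Unlike in the classical Sobolev case, the Hartree term $\int(|x|^{-\mu}\ast(\sigma^{2_\mu^{\ast}-1}\rho))W_i^{2_\mu^{\ast}-1}\rho$ does not localize naturally: the kernel $|x|^{-\mu}$ is long-range, so pieces $\rho_i,\rho_j$ with disjoint supports still interact through the convolution. Controlling these genuinely non-local cross contributions requires a careful pairing of Proposition \ref{FPU1} with duality in the HLS-admissible Lebesgue spaces $L^{2N/(2N-\mu)}$, and it is here that the restriction $0<\mu\leq 4$ (which keeps $2_\mu^{\ast}-1\geq 1$ so that the relevant pointwise expansions are convex) together with $3\leq N<6-\mu$ (which forces the interaction exponent $\gamma$ arising from Proposition \ref{FPU1} to exceed the spectral gap loss $1-2_\mu^{\ast}/\hbar$) enters the argument.
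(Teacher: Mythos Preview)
Your overall strategy---localize $\rho$ via cut-offs adapted to each bubble, verify that each localized piece is approximately orthogonal to the first two eigenspaces of $W_i$, apply the single-bubble spectral gap (Proposition \ref{pro-4.1}) to each piece, and sum using $\sum_i\int|\nabla(\Psi_i\rho)|^2\leq\int|\nabla\rho|^2+o(1)\|\nabla\rho\|_{L^2}^2$---is exactly the paper's approach. The paper uses the bump functions $\Psi_i$ of Proposition \ref{fai2} rather than a partition of unity with $\sum\chi_i^2\equiv 1$, but since by Proposition \ref{fai2}-(2) the supports $\{\Psi_i>0\}$ are essentially pairwise disjoint one still has $\sum_i\Psi_i^2\leq 1$, and the two devices play the same role.

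Your final paragraph, however, misidentifies both the difficulty and the role of the parameter restrictions. The nonlocal cross terms are \emph{not} a genuine obstacle in this proposition. In the paper, the convolution term $\int(|x|^{-\mu}\ast(\sigma^{2_\mu^\ast-1}\rho))W_i^{2_\mu^\ast-1}\rho$ is decomposed into the piece where both integration variables lie in $\{\Psi_i=1\}$---there $\sigma=(1+o(1))\alpha_iW_i$ pointwise, so the term reduces to the single-bubble one---plus remainders with at least one variable in $\{\sum_j\Psi_j<1\}$; these remainders are bounded, via HLS and H\"older, by a power of the small mass $\int_{\{\sum_j\Psi_j<1\}}\sigma^{2^\ast}=o(1)$ (Proposition \ref{fai2}-(1)) times $\|\nabla\rho\|_{L^2}^2$, exactly as in \eqref{f0}--\eqref{f3}. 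No interaction estimate from Proposition \ref{FPU1} is invoked here, and the restriction $3\leq N<6-\mu$ does \emph{not} enter: Proposition \ref{estimate1} is stated and proved for all $N\geq 3$. That dimensional restriction appears only later, in the proof of Theorem \ref{Figalli}, where one needs $\|W_i^{2_\mu^\ast-2}W_j\|_{L^{2N/(N-\mu+2)}}\approx\int W_i^{2^\ast-1}W_j$ to control the nonlinear error terms $(I_1)$--$(I_5)$; this comparison fails for $N\geq 6-\mu$. So your plan is sound, but you should drop the claim that the interaction exponent must dominate the spectral-gap loss in this step.
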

\begin{proposition}\label{estimate2}
Let $N\geq3$ and $\kappa\in\mathbb{N}$. For any $\epsilon>0$ there exists $\delta=\delta(N,\kappa,\epsilon)>0$ such that the following statement holds. Let $u=\sum\limits_{i=1}^\kappa\alpha_iW_i+\rho$, where the family $(\alpha_i,W_i)_{1\leq i\leq\kappa}$ is $\delta$-interacting, and $\rho$ satisfies both the orthogonality conditions satisfies \eqref{EP1}, \eqref{EP2}, \eqref{EP3} and the bound $\|\nabla\rho\|_{L^2}\leq1$. Then, for any $1\leq i\leq\kappa$, it holds that
\begin{equation}\label{www}
|\alpha_i-1|\lesssim \epsilon\big\|\nabla\rho\big\|_{L^2}+\Big\|\Delta u+\Big(\frac{1}{|x|^{\mu}}\ast |u|^{2_{\mu}^{\ast}}\Big)|u|^{2_{\mu}^{\ast}-2}u\Big\|_{(\mathcal{D}^{1,2}(\mathbb{R}^N))^{-1}}+\Big\|\nabla\rho\Big\|_{L^2}^{\min\big(2,\frac{N-\mu+2}{N-2}\big)},
\end{equation}
and for any pair of indices $i\neq j$ it holds
\begin{equation*}
\int_{\mathbb{R}^N}\Big(\frac{1}{|x|^{\mu}}\ast W_i^{2_{\mu}^{\ast}}\Big)W_i^{2_{\mu}^{\ast}-1}W_j\lesssim \epsilon\big\|\nabla\rho\big\|_{L^2}+\Big\|\Delta u+\Big(\frac{1}{|x|^{\mu}}\ast |u|^{2_{\mu}^{\ast}}\Big)|u|^{2_{\mu}^{\ast}-2}u\Big\|_{(\mathcal{D}^{1,2}(\mathbb{R}^N))^{-1}}+\Big\|\nabla\rho\Big\|_{L^2}^{\min\big(2,\frac{N-\mu+2}{N-2}\big)}.
\end{equation*}
\end{proposition}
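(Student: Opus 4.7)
\emph{Strategy.} The plan is to derive both bounds from a single identity, obtained by testing the error
\[f:=\Delta u+\bigl(|x|^{-\mu}\ast|u|^{2_{\mu}^{\ast}}\bigr)|u|^{2_{\mu}^{\ast}-2}u\]
against the bubble $W_i$ viewed as a $\mathcal{D}^{1,2}(\mathbb{R}^N)$-element. Using the Euler--Lagrange identity $-\Delta W_i=(|x|^{-\mu}\ast W_i^{2_{\mu}^{\ast}})W_i^{2_{\mu}^{\ast}-1}$ to integrate by parts, together with the orthogonality condition \eqref{EP1} and the symmetry of the Riesz kernel to eliminate the linear-in-$\rho$ piece of $\int\nabla u\cdot\nabla W_i$, the pairing $\langle f,W_i\rangle$ rewrites as
\[
\sum_{j=1}^{\kappa}\alpha_j\!\int_{\mathbb{R}^N}\!\bigl(|x|^{-\mu}\ast W_i^{2_{\mu}^{\ast}}\bigr)W_i^{2_{\mu}^{\ast}-1}W_j\;-\;\int_{\mathbb{R}^N}\!\bigl(|x|^{-\mu}\ast|u|^{2_{\mu}^{\ast}}\bigr)|u|^{2_{\mu}^{\ast}-2}u\,W_i.
\]

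\emph{Expansion and cancellations.} Next I would Taylor-expand $|u|^{2_{\mu}^{\ast}}=|\sigma+\rho|^{2_{\mu}^{\ast}}$ and $|u|^{2_{\mu}^{\ast}-2}u$ around $\sigma$, keeping the zeroth and first order in $\rho$ explicit. After HLS and Sobolev, the remaining higher-order part of the second integral contributes $O\!\bigl(\|\nabla\rho\|_{L^2}^{\min(2,\,2_{\mu}^{\ast}-1)}\bigr)$: the threshold is decided by whether $2_{\mu}^{\ast}-1=\tfrac{N+2-\mu}{N-2}$ exceeds $2$, and in the subcubic regime $1<2_{\mu}^{\ast}-1<2$ the pointwise bound $\bigl|\,|u|^{p}-\sigma^{p}-p\sigma^{p-1}\rho\,\bigr|\lesssim|\rho|^{p}$ must replace a quadratic Taylor estimate, which produces the second branch of the minimum. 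The linear-in-$\rho$ contribution
\[
(2_{\mu}^{\ast}-1)\!\int\!\bigl(|x|^{-\mu}\ast\sigma^{2_{\mu}^{\ast}}\bigr)\sigma^{2_{\mu}^{\ast}-2}\rho\,W_i+2_{\mu}^{\ast}\!\int\!\bigl(|x|^{-\mu}\ast(\sigma^{2_{\mu}^{\ast}-1}\rho)\bigr)\sigma^{2_{\mu}^{\ast}-1}W_i
\]
is killed by \eqref{EP1}--\eqref{EP3} once $\sigma^{2_{\mu}^{\ast}}$ and $\sigma^{2_{\mu}^{\ast}-1}$ are split into the diagonal sums $\sum_k\alpha_k^{p}W_k^{p}$ plus bubble-interaction corrections, the latter being bounded via Proposition \ref{FPU1} by $\epsilon\|\nabla\rho\|_{L^2}$ as soon as $\delta$ is taken small enough.

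\emph{Extracting the two estimates.} After all cancellations, the identity collapses to
\[
S_{i}\bigl(\alpha_i-\alpha_i^{2\cdot 2_{\mu}^{\ast}-1}\bigr)-\Sigma_i=-\langle f,W_i\rangle+O\!\Bigl(\epsilon\|\nabla\rho\|_{L^2}+\|\nabla\rho\|_{L^2}^{\min(2,\,(N-\mu+2)/(N-2))}\Bigr),
\]
where $S_{i}:=\int(|x|^{-\mu}\ast W_i^{2_{\mu}^{\ast}})W_i^{2_{\mu}^{\ast}}>0$ and $\Sigma_i\ge 0$ is a linear combination, with coefficients close to $1$, of the interaction integrals $\int(|x|^{-\mu}\ast W_i^{2_{\mu}^{\ast}})W_i^{2_{\mu}^{\ast}-1}W_j$ for $j\ne i$. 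A Taylor expansion around $\alpha_i=1$ converts the bracket into $-2(2_{\mu}^{\ast}-1)(\alpha_i-1)+O((\alpha_i-1)^2)$, and $|\langle f,W_i\rangle|\le\|f\|_{(\mathcal{D}^{1,2}(\mathbb{R}^N))^{-1}}\|\nabla W_i\|_{L^2}$. Since $\Sigma_i\ge 0$ and $|\alpha_i-1|$ is a priori small (by the $\delta$-interacting hypothesis coupled with a soft bootstrap), the two nonnegative pieces $|\alpha_i-1|$ and $\Sigma_i$ can be bounded separately by the right-hand side, yielding \eqref{www} and the second inequality.

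\emph{Main obstacle.} The principal difficulty is the bookkeeping of all the cross-interaction integrals $\int(|x|^{-\mu}\ast W_a^{p}W_b^{q})W_c^{r}W_d^{s}$ produced by the double-convolution structure. Each such term must either be identified with one of the target interactions, shown to be $o(1)\cdot\|\nabla\rho\|_{L^2}$ through Proposition \ref{FPU1} combined with the $\delta$-interacting hypothesis, or absorbed into the remainder $\|\nabla\rho\|_{L^2}^{\min(2,\,(N-\mu+2)/(N-2))}$. The appearance of this minimum exponent, rather than the simple $\|\nabla\rho\|_{L^2}^{2}$ that one would expect from a pure quadratic Taylor bound, is exactly where the nonlocal Hartree-type nonlinearity diverges from the pure-power setting of \cite{FG20}, and it is the technical heart of the proof.
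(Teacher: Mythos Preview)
Your strategy of testing the error $f$ against $W_i$ alone does not suffice to decouple $|\alpha_i-1|$ from the interaction term $\Sigma_i$. The flaw is in the final step: from
\[
-2(2_{\mu}^{\ast}-1)S_i(\alpha_i-1)-\Sigma_i=\text{(bounded RHS)}
\]
you assert that both $|\alpha_i-1|$ and $\Sigma_i$ can be bounded individually. When $\alpha_i>1$ the two terms on the left have the same sign and their magnitudes add, so this works. But when $\alpha_i<1$ the first term is positive while $-\Sigma_i\le 0$, and the two may cancel; a single scalar identity in two unknowns of opposite sign yields no separate control. No ``soft bootstrap'' fixes this, since both quantities are a priori only $O(\delta)$ and the right-hand side can be much smaller.

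The paper closes this gap by testing against \emph{two} functions, $\eta=W_i$ and $\eta=\partial_\lambda W_i$, each multiplied by the bump $\Psi_i$ of Proposition~\ref{fai2}. The localization matters: on $\{\Psi_i>0\}$ the bubbles $W_j$ with $\lambda_j\le\lambda_i$ are nearly constant (Proposition~\ref{fai2}(4)), so the part $\mathcal{Z}_2$ of $\mathcal{Z}$ built from them can be replaced by the number $\mathcal{Z}_2(0)$. One is then left with a $2\times2$ linear system in $(\alpha_i-\alpha_i^{2\cdot2_{\mu}^{\ast}-1})$ and $\mathcal{Z}_2(0)$, which is nondegenerate because
\[
\frac{\int_{\mathbb{R}^N}(|x|^{-\mu}\ast W_i^{2_{\mu}^{\ast}})W_i^{2_{\mu}^{\ast}}}{\int_{\mathbb{R}^N}(|x|^{-\mu}\ast W_i^{2_{\mu}^{\ast}})W_i^{2_{\mu}^{\ast}-1}}\neq\frac{\int_{\mathbb{R}^N}(|x|^{-\mu}\ast W_i^{2_{\mu}^{\ast}})W_i^{2_{\mu}^{\ast}-1}\partial_\lambda W_i}{\int_{\mathbb{R}^N}(|x|^{-\mu}\ast W_i^{2_{\mu}^{\ast}})W_i^{2_{\mu}^{\ast}-2}\partial_\lambda W_i},
\]
the numerator on the right vanishing. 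Solving gives \eqref{www}; plugging it back into the $\eta=W_i$ equation then yields the interaction bound. The remaining bubbles (those in $\mathcal{Z}_1$) are handled by induction on $i$ after ordering by scale. Without the second test function, the localization, and the induction, the argument does not close.
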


Now, we are ready to prove the main results.

\subsection{Proof of Theorem \ref{Figalli}.} We first assume Propositions \ref{estimate1} and \ref{estimate2} and prove Theorem \ref{Figalli}.
The Propositions \ref{estimate2} tells us that the optimal coefficients of the linear combination of Talenti bubbles are approximate to $1$. Let $$\mathcal{\widetilde{M}}_{\kappa}=\Big\{\sum_{i=1}^{\kappa}\widetilde{W}[\xi_i,\lambda_i]:  \xi_i\in\mathbb{R}^N, \lambda_i>0\Big\}.$$
The fact that $dist_{\mathcal{D}^{1,2}}\big(u,\mathcal{\widetilde{M}}_{\kappa}\big)\leq\delta$, which yields $\|\rho\|_{L^2}\leq\delta$.
Furthermore, since $(\widetilde{W}[\xi_i,\lambda_i])_{1\leq i\leq\kappa}$ is a $\delta$-interacting family of Talenti bubbles, then we can take $\tilde{\delta}\leq\delta$ so that $(W[\xi_i,\lambda_i])_{1\leq i\leq\kappa}$ is a $\tilde{\delta}$-interacting family of Talenti bubbles.

Now we evaluate the left side of (\ref{ele}) in the $(\mathcal{D}^{1,2}(\mathbb{R}^N))^{-1}$-norm should control the $\mathcal{D}^{1,2}(\mathbb{R}^N)$-distance of $u$ from the manifold of sums of Talenti bubbles. By the orthogonality condition, we have
    \begin{equation}\label{Sta1}
    \begin{split}
  \int_{\mathbb{R}^N}|\nabla \rho|^2&=\int_{\mathbb{R}^N}\nabla u\nabla \rho=  \int_{\mathbb{R}^N}\left(\frac{1}{|x|^{\mu}}\ast |u|^{2_{\mu}^{\ast}}\right)|u|^{2_{\mu}^{\ast}-2}u\rho- \int_{\mathbb{R}^N}\Big[\Delta u+\left(\frac{1}{|x|^{\mu}}\ast |u|^{2_{\mu}^{\ast}}\right)|u|^{2_{\mu}^{\ast}-2}u\Big]\rho\\&
  \leq\int_{\mathbb{R}^N}\left(\frac{1}{|x|^{\mu}}\ast |u|^{2_{\mu}^{\ast}}\right)|u|^{2_{\mu}^{\ast}-2}u\rho+\big\|\nabla\rho\big\|_{L^2}\Big\|\Delta u+\left(\frac{1}{|x|^{\mu}}\ast |u|^{2_{\mu}^{\ast}}\right)|u|^{2_{\mu}^{\ast}-2}u\Big\|_{(\mathcal{D}^{1,2}(\mathbb{R}^N))^{-1}}.
    \end{split}
    \end{equation}
Calculating the first term on the right-hand side, we know
\begin{equation*}
\begin{split}
\int_{\mathbb{R}^N}\left(\frac{1}{|x|^{\mu}}\ast |u|^{2_{\mu}^{\ast}}\right)&|u|^{2_{\mu}^{\ast}-2}u\rho\leq\int_{\mathbb{R}^N}\left(\frac{1}{|x|^{\mu}}\ast \Big||u|^{2_{\mu}^{\ast}}-|\sigma|^{2_{\mu}^{\ast}}\Big| \right)\Big||u|^{2_{\mu}^{\ast}-2}u-|\sigma|^{2_{\mu}^{\ast}-2}\sigma\Big|\rho
\\+&
\int_{\mathbb{R}^N}\left(\frac{1}{|x|^{\mu}}\ast \Big||u|^{2_{\mu}^{\ast}}-|\sigma|^{2_{\mu}^{\ast}}\Big| \right)\bigg|\Big|\sum\limits_{i=1}^{\kappa}\alpha_iW_i\Big|^{2_{\mu}^{\ast}-2}\sum\limits_{i=1}^{\kappa}\alpha_iW_i
-\sum\limits_{i=1}^{\kappa}\big|\alpha_i\big|^{2_{\mu}^{\ast}-2}\alpha_i\big|W_i\big|^{2_{\mu}^{\ast}-2}W_i\bigg|\rho
\\+&
\int_{\mathbb{R}^N}\left(\frac{1}{|x|^{\mu}}\ast \bigg|\Big|\sum\limits_{i=1}^{\kappa}\alpha_iW_i\Big|^{2_{\mu}^{\ast}}
-\sum\limits_{i=1}^{\kappa}\big|\alpha_i\big|^{2_{\mu}^{\ast}}\big|W_i\big|^{2_{\mu}^{\ast}}\bigg| \right)\Big||u|^{2_{\mu}^{\ast}-2}u-|\sigma|^{2_{\mu}^{\ast}-2}\sigma\Big|\rho
\\+&
\int_{\mathbb{R}^N}\left(\frac{1}{|x|^{\mu}}\ast \bigg|\Big|\sum\limits_{i=1}^{\kappa}\alpha_iW_i\Big|^{2_{\mu}^{\ast}}
-\sum\limits_{i=1}^{\kappa}\big|\alpha_i\big|^{2_{\mu}^{\ast}}\big|W_i\big|^{2_{\mu}^{\ast}}\bigg| \right)\bigg|\Big|\sum\limits_{i=1}^{\kappa}\alpha_iW_i\Big|^{2_{\mu}^{\ast}-2}\sum\limits_{i=1}^{\kappa}\alpha_iW_i
\\-&
\sum\limits_{i=1}^{\kappa}\big|\alpha_i\big|^{2_{\mu}^{\ast}-2}\alpha_i\big|W_i\big|^{2_{\mu}^{\ast}-2}W_i\bigg|\rho
+\sum\limits_{i=1}^{\kappa}\int_{\mathbb{R}^N}\left(\frac{1}{|x|^{\mu}}\ast \Big||u|^{2_{\mu}^{\ast}}-|\sigma|^{2_{\mu}^{\ast}}\Big| \right)\big|\alpha_i\big|^{2_{\mu}^{\ast}-2}\alpha_i\big|W_i\big|^{2_{\mu}^{\ast}-2}W_i\rho
\\+&
\sum\limits_{i=1}^{\kappa}\int_{\mathbb{R}^N}\left(\frac{1}{|x|^{\mu}}\ast \bigg|\Big|\sum\limits_{i=1}^{\kappa}\alpha_iW_i\Big|^{2_{\mu}^{\ast}}
-\sum\limits_{i=1}^{\kappa}\big|\alpha_i\big|^{2_{\mu}^{\ast}}\big|W_i\big|^{2_{\mu}^{\ast}}\bigg| \right)\big|\alpha_i\big|^{2_{\mu}^{\ast}-2}\alpha_i\big|W_i\big|^{2_{\mu}^{\ast}-2}W_i\rho
\\+&
\sum\limits_{i=1}^{\kappa}\int_{\mathbb{R}^N}\left(\frac{1}{|x|^{\mu}}\ast \Big|\alpha_iW_i\Big|^{2_{\mu}^{\ast}} \right)\Big||u|^{2_{\mu}^{\ast}-2}u-|\sigma|^{2_{\mu}^{\ast}-2}\sigma\Big|\rho
\\+&
\sum\limits_{i=1}^{\kappa}\int_{\mathbb{R}^N}\left(\frac{1}{|x|^{\mu}}\ast \Big|\alpha_iW_i\Big|^{2_{\mu}^{\ast}} \right)\bigg|\Big|\sum\limits_{i=1}^{\kappa}\alpha_iW_i\Big|^{2_{\mu}^{\ast}-2}\sum\limits_{i=1}^{\kappa}\alpha_iW_i
-\sum\limits_{i=1}^{\kappa}\big|\alpha_i\big|^{2_{\mu}^{\ast}-2}\alpha_i\big|W_i\big|^{2_{\mu}^{\ast}-2}W_i\bigg|\rho
\\+&
\sum\limits_{i=1}^{\kappa}\int_{\mathbb{R}^N}\left(\frac{1}{|x|^{\mu}}\ast \Big|\sum\limits_{i=1}^{\kappa}\alpha_iW_i\Big|^{2_{\mu}^{\ast}} \right)\big|\alpha_i\big|^{2_{\mu}^{\ast}-2}\alpha_i\big|W_i\big|^{2_{\mu}^{\ast}-2}W_i\rho.
\end{split}
\end{equation*}
We divide our argument into several cases.\\		
$\mathbf{Case\ 1.}$
 The exponents $\mu$ and $N$ satisfy
 \begin{equation*}
\left\lbrace
\begin{aligned}
N&=3,\hspace{2mm}\mu\in(0,N),\\
N&=4, \hspace{2mm}\mu\in(0,2],\\
N&\geq5,\hspace{2mm}\mu\in(0,N)\hspace{2mm}\mbox{and}\hspace{2mm}\mu\in(0,1].
\end{aligned}
\right.
\end{equation*}
Thus we have that
\begin{equation}\label{pmr1}
\Big||u|^{2_{\mu}^{\ast}}-|\sigma|^{2_{\mu}^{\ast}}\Big|\leq 2_{\mu}^{\ast}|\sigma|^{2_{\mu}^{\ast}-1}|\rho|+\widetilde{C}_1\big(|\sigma|^{2_{\mu}^{\ast}-2}|\rho|^2+|\rho|^{2_{\mu}^{\ast}}\big), \quad\text{for}~~2_{\mu}^\ast\geq2,
\end{equation}
and
\begin{equation}\label{pmr2}
\Big||u|^{2_{\mu}^{\ast}-2}u-|\sigma|^{2_{\mu}^{\ast}-2}\sigma\Big|\leq (2_{\mu}^{\ast}-1)|\sigma|^{2_{\mu}^{\ast}-2}|\rho|+\widetilde{C}_2\big(|\sigma|^{2_{\mu}^{\ast}-3}|\rho|^2+|\rho|^{2_{\mu}^{\ast}-1}\big),\quad\text{for}~~2_{\mu}^\ast-1\geq2.
\end{equation}
Combining the elementary inequalities
\begin{equation}\label{pmr3}
\bigg|\Big|\sum\limits_{i=1}^{\kappa}\alpha_iW_i\Big|^{2_{\mu}^{\ast}}
-\sum\limits_{i=1}^{\kappa}\big|\alpha_i\big|^{2_{\mu}^{\ast}}\big|W_i\big|^{2_{\mu}^{\ast}}\bigg|\lesssim \sum\limits_{1\leq i\neq j\leq\kappa}\big|\alpha_i\big|^{2_{\mu}^{\ast}-1}\alpha_j\big|W_i\big|^{2_{\mu}^{\ast}-1}W_j,
\end{equation}
and
\begin{equation}\label{pmr4}
\bigg|\Big|\sum\limits_{i=1}^{\kappa}\alpha_iW_i\Big|^{2_{\mu}^{\ast}-2}\sum\limits_{i=1}^{\kappa}\alpha_iW_i
-\sum\limits_{i=1}^{\kappa}\big|\alpha_i\big|^{2_{\mu}^{\ast}-2}\alpha_i\big|W_i\big|^{2_{\mu}^{\ast}-2}W_i\bigg|\lesssim \sum\limits_{1\leq i\neq j\leq\kappa}\big|\alpha_i\big|^{2_{\mu}^{\ast}-2}\alpha_j\big|W_i\big|^{2_{\mu}^{\ast}-2}W_j,
\end{equation}
then we can evaluate separately the following various terms by using the basic inequalities.  We first have that
\begin{equation*}
\aligned
\int_{\mathbb{R}^N}\Big(\frac{1}{|x|^{\mu}}\ast\big(|\sigma|^{2_{\mu}^{\ast}-1}|\rho|\big)\Big)|\sigma|^{2_{\mu}^{\ast}-2}|\rho|^2&\leq
\big\||\sigma|^{2_{\mu}^{\ast}-1}|\rho|\big\|_{L^{\frac{2N}{2N-\mu}}}\big\||\sigma|^{2_{\mu}^{\ast}-2}|\rho|^2\big\|_{L^{\frac{2N}{2N-\mu}}}\\&
\leq\big\|\sigma \big\|_{L^{2^*}}^{\frac{N-2\mu+6}{N-2}}\big\|\rho\big\|_{L^{2^*}}^3\lesssim \big\|\nabla\rho\big\|_{L^{2}}^3.
\endaligned
\end{equation*}
We similarly compute and get
\begin{equation*}
\aligned
\int_{\mathbb{R}^N}\Big(\frac{1}{|x|^{\mu}}\ast\big(|\sigma|^{2_{\mu}^{\ast}-2}|\rho|^2+|\rho|^{2_{\mu}^{\ast}}\big)\Big)|\sigma|^{2_{\mu}^{\ast}-2}|\rho|^2&\leq
\big\|\sigma \big\|_{L^{2^*}}^{\frac{8-2\mu}{N-2}}\big\|\rho\big\|_{L^{2^*}}^4
+\big\|\sigma \big\|_{L^{2^*}}^{\frac{4-\mu}{N-2}}\big\|\rho\big\|_{L^{2^*}}^\frac{4N-\mu-4}{N-2}\\&\lesssim \big\|\nabla\rho\big\|_{L^{2}}^4+\big\|\nabla\rho\big\|_{L^{2}}^\frac{4N-\mu-4}{N-2}
\endaligned
\end{equation*}
and
\begin{equation*}
\aligned
\int_{\mathbb{R}^N}&\bigg[\frac{1}{|x|^{\mu}}\ast\Big(|\sigma|^{2_{\mu}^{\ast}-1}|\rho|+|\sigma|^{2_{\mu}^{\ast}-2}|\rho|^2+|\rho|^{2_{\mu}^{\ast}}\Big)\bigg]\big(|\sigma|^{2_{\mu}^{\ast}-3}|\rho|^3+|\rho|^{2_{\mu}^{\ast}}\big)\vspace{4mm}\\&\leq
\big\|\sigma \big\|_{L^{2^*}}^{\frac{8-2\mu}{N-2}}\big\|\rho\big\|_{L^{2^*}}^4
+\big\|\sigma \big\|_{L^{2^*}}^{\frac{10-N-2\mu}{N-2}}\big\|\rho\big\|_{L^{2^*}}^5+\big\|\sigma \big\|_{L^{2^*}}^{\frac{N+6-2\mu}{N-2}}\big\|\rho\big\|_{L^{2^*}}^3\vspace{4mm}\\&
+\big\|\sigma \big\|_{L^{2^*}}^{\frac{N-\mu+2}{N-2}}\big\|\rho\big\|_{L^{2^*}}^\frac{3N-\mu-2}{N-2}
+\big\|\sigma \big\|_{L^{2^*}}^{\frac{4-\mu}{N-2}}\big\|\rho\big\|_{L^{2^*}}^\frac{4N-\mu-4}{N-2}+\big\|\rho\big\|_{L^{2^*}}^\frac{4N-2\mu}{N-2}\vspace{4mm}\\&\lesssim \big\|\nabla\rho\big\|_{L^{2}}^3+\big\|\nabla\rho\big\|_{L^{2}}^4+\big\|\nabla\rho\big\|_{L^{2}}^5+\big\|\nabla\rho\big\|_{L^{2}}^\frac{3N-\mu-2}{N-2}
+\big\|\nabla\rho\big\|_{L^{2}}^\frac{4N-\mu-4}{N-2}+\big\|\nabla\rho\big\|_{L^{2}}^\frac{4N-2\mu}{N-2}.
\endaligned
\end{equation*}
By a direct computation, we find that
\begin{equation}\label{pmr5}
\aligned
\int_{\mathbb{R}^N}&\left(\frac{1}{|x|^{\mu}}\ast \Big||u|^{2_{\mu}^{\ast}}-|\sigma|^{2_{\mu}^{\ast}}\Big| \right)\bigg|\Big|\sum\limits_{i=1}^{\kappa}\alpha_iW_i\Big|^{2_{\mu}^{\ast}-2}\sum\limits_{i=1}^{\kappa}\alpha_iW_i
-\sum\limits_{i=1}^{\kappa}\big|\alpha_i\big|^{2_{\mu}^{\ast}-2}\alpha_i\big|W_i\big|^{2_{\mu}^{\ast}-2}W_i\bigg|\rho
\\&\lesssim
\sum\limits_{1\leq i\neq j\leq\kappa}\int_{\mathbb{R}^N}\bigg[\frac{1}{|x|^{\mu}}\ast\Big(|\sigma|^{2_{\mu}^{\ast}-1}|\rho|+|\sigma|^{2_{\mu}^{\ast}-2}|\rho|^2+|\rho|^{2_{\mu}^{\ast}}\Big)\bigg]W_i^{2_{\mu}^{\ast}-2}W_j|\rho|\\&\leq
\sum\limits_{1\leq i\neq j\leq\kappa}\Big(\big\|\sigma \big\|_{L^{2^*}}^{\frac{N-\mu+2}{N-2}}\big\|\rho\big\|_{L^{2^*}}^2
+\big\|\sigma \big\|_{L^{2^*}}^{\frac{4-\mu}{N-2}}\big\|\rho\big\|_{L^{2^*}}^3+\big\|\rho \big\|_{L^{2^*}}^{\frac{3N-\mu-2}{N-2}}\Big)\big\|W_i^{2_{\mu}^{\ast}-2}W_j\big\|_{L^{\frac{2N}{N-\mu+2}}}\\&\lesssim \sum\limits_{1\leq i\neq j\leq\kappa}\Big(\big\|\nabla\rho\big\|_{L^{2}}^2+\big\|\nabla\rho\big\|_{L^{2}}^3+\big\|\nabla\rho\big\|_{L^{2}}^\frac{3N-\mu-2}{N-2}\Big)
\big\|W_i^{2_{\mu}^{\ast}-2}W_j\big\|_{L^{\frac{2N}{N-\mu+2}}}=:(I_1).
\endaligned
\end{equation}
Similar to the calculation of \eqref{pmr5}, we also get that
\begin{equation*}
\aligned
\int_{\mathbb{R}^N}&\left(\frac{1}{|x|^{\mu}}\ast \bigg|\Big|\sum\limits_{i=1}^{\kappa}\alpha_iW_i\Big|^{2_{\mu}^{\ast}}
-\sum\limits_{i=1}^{\kappa}\big|\alpha_i\big|^{2_{\mu}^{\ast}}\big|W_i\big|^{2_{\mu}^{\ast}}\bigg| \right)\Big||u|^{2_{\mu}^{\ast}-2}u-|\sigma|^{2_{\mu}^{\ast}-2}\sigma\Big|\rho\\&
\hspace{6mm}\lesssim
\sum\limits_{1\leq i\neq j\leq\kappa}\int_{\mathbb{R}^N}\bigg[\frac{1}{|x|^{\mu}}\ast\Big(\big|W_i\big|^{2_{\mu}^{\ast}-1}W_j\Big)\bigg]\Big(|\sigma|^{2_{\mu}^{\ast}-2}|\rho|^2+|\sigma|^{2_{\mu}^{\ast}-3}|\rho|^3+|\rho|^{2_{\mu}^{\ast}}\Big)
\\&\hspace{6mm}\lesssim \sum\limits_{1\leq i\neq j\leq\kappa}\Big(\big\|\nabla\rho\big\|_{L^{2}}^2+\big\|\nabla\rho\big\|_{L^{2}}^3+\big\|\nabla\rho\big\|_{L^{2}}^\frac{2N-\mu}{N-2}\Big)
\big\|W_i^{2_{\mu}^{\ast}-1}W_j\big\|_{L^{\frac{2N}{2N-\mu}}}=:(I_2),
\endaligned
\end{equation*}
\begin{equation*}
\aligned
\sum\limits_{i=1}^{\kappa}&\int_{\mathbb{R}^N}\left(\frac{1}{|x|^{\mu}}\ast \bigg|\Big|\sum\limits_{i=1}^{\kappa}\alpha_iW_i\Big|^{2_{\mu}^{\ast}}
-\sum\limits_{i=1}^{\kappa}\big|\alpha_i\big|^{2_{\mu}^{\ast}}\big|W_i\big|^{2_{\mu}^{\ast}}\bigg| \right)\big|\alpha_i\big|^{2_{\mu}^{\ast}-2}\alpha_i\big|W_i\big|^{2_{\mu}^{\ast}-2}W_i\rho\\&
\leq \widetilde{C}_{3}
\sum\limits_{1\leq i\neq j\leq\kappa}\int_{\mathbb{R}^N}\Big[\frac{1}{|x|^{\mu}}\ast\Big(\big|W_i\big|^{2_{\mu}^{\ast}-1}W_j\Big)\Big]\sum\limits_{ i=1}^{\kappa}W_i^{2_{\mu}^{\ast}-1}|\rho|\lesssim \sum\limits_{1\leq i\neq j\leq\kappa}\big\|\nabla\rho\big\|_{L^{2}}
\big\|W_i^{2_{\mu}^{\ast}-1}W_j\big\|_{L^{\frac{2N}{2N-\mu}}}=:(I_3),
\endaligned
\end{equation*}
\begin{equation*}
\aligned
\sum\limits_{i=1}^{\kappa}&\int_{\mathbb{R}^N}\left(\frac{1}{|x|^{\mu}}\ast \Big|\alpha_iW_i\Big|^{2_{\mu}^{\ast}} \right)\bigg|\Big|\sum\limits_{i=1}^{\kappa}\alpha_iW_i\Big|^{2_{\mu}^{\ast}-2}\sum\limits_{i=1}^{\kappa}\alpha_iW_i
-\sum\limits_{i=1}^{\kappa}\big|\alpha_i\big|^{2_{\mu}^{\ast}-2}\alpha_i\big|W_i\big|^{2_{\mu}^{\ast}-2}W_i\bigg|\rho\\&
\leq \widetilde{C}_{4}
\sum\limits_{i=1}^{\kappa}\int_{\mathbb{R}^N}\Big(\frac{1}{|x|^{\mu}}\ast\big|W_i\big|^{2_{\mu}^{\ast}}\Big)\sum\limits_{ 1\leq i\neq j\leq\kappa}W_i^{2_{\mu}^{\ast}-2}W_j|\rho|\lesssim \sum\limits_{1\leq i\neq j\leq\kappa}\big\|\nabla\rho\big\|_{L^{2}}
\big\|W_i^{2_{\mu}^{\ast}-2}W_j\big\|_{L^{\frac{2N}{N-\mu+2}}}=:(I_4),
\endaligned
\end{equation*}
and
\begin{equation*}
\aligned
\int_{\mathbb{R}^N}&\left(\frac{1}{|x|^{\mu}}\ast \bigg|\Big|\sum\limits_{i=1}^{\kappa}\alpha_iW_i\Big|^{2_{\mu}^{\ast}}
-\sum\limits_{i=1}^{\kappa}\big|\alpha_i\big|^{2_{\mu}^{\ast}}\big|W_i\big|^{2_{\mu}^{\ast}}\bigg| \right)\bigg|\Big|\sum\limits_{i=1}^{\kappa}\alpha_iW_i\Big|^{2_{\mu}^{\ast}-2}\sum\limits_{i=1}^{\kappa}\alpha_iW_i-
\sum\limits_{i=1}^{\kappa}\big|\alpha_i\big|^{2_{\mu}^{\ast}-2}\alpha_i\big|W_i\big|^{2_{\mu}^{\ast}-2}W_i\bigg|\rho\\&
\hspace{16mm}\lesssim
\sum\limits_{1\leq i\neq j\leq\kappa}\int_{\mathbb{R}^N}\Big[\frac{1}{|x|^{\mu}}\ast\Big(\big|W_i\big|^{2_{\mu}^{\ast}-1}W_j\Big)\Big]\sum\limits_{1\leq i\neq j\leq\kappa}\big|W_i\big|^{2_{\mu}^{\ast}-2}W_j|\rho|\\&\hspace{16mm}\lesssim \sum\limits_{1\leq i\neq j\leq\kappa}\big\|\nabla\rho\big\|_{L^{2}}\big\|W_i^{2_{\mu}^{\ast}-2}W_j\big\|_{L^{\frac{2N}{N-\mu+2}}}
\big\|W_i^{2_{\mu}^{\ast}-1}W_j\big\|_{L^{\frac{2N}{2N-\mu}}}=:(I_5).
\endaligned
\end{equation*}
By straight computations show that
\begin{equation*}
\aligned
\sum\limits_{i=1}^{\kappa}\int_{\mathbb{R}^N}&\left(\frac{1}{|x|^{\mu}}\ast \Big|\alpha_iW_i\Big|^{2_{\mu}^{\ast}} \right)\Big||u|^{2_{\mu}^{\ast}-2}u-|\sigma|^{2_{\mu}^{\ast}-2}\sigma\Big|\rho\\&
\leq
\sum\limits_{i=1}^ {\kappa}\big|\alpha_i\big|^{2_{\mu}^{\ast}}\int_{\mathbb{R}^N}\Big(\frac{1}{|x|^{\mu}}\ast\big|W_i\big|^{2_{\mu}^{\ast}}\Big)\Big((2_{\mu}^{\ast}-1)|\sigma|^{2_{\mu}^{\ast}-2}|\rho|^2+\widetilde{C}_2\big(|\sigma|^{2_{\mu}^{\ast}-3}|\rho|^3+|\rho|^{2_{\mu}^{\ast}}\big)\Big)
\\&\leq \widetilde{C}_5\Big(\big\|\nabla\rho\big\|_{L^{2}}^3+\big\|\nabla\rho\big\|_{L^{2}}^\frac{2N-\mu}{N-2}\Big)
+(2_{\mu}^{\ast}-1)\sum\limits_{i=1}^ {\kappa}\big|\alpha_i\big|^{2_{\mu}^{\ast}}\int_{\mathbb{R}^N}\Big(\frac{1}{|x|^{\mu}}\ast\big|W_i\big|^{2_{\mu}^{\ast}}\Big)|\sigma|^{2_{\mu}^{\ast}-2}|\rho|^2,
\endaligned
\end{equation*}
and analogously
\begin{equation*}
\aligned
\sum\limits_{i=1}^{\kappa}\int_{\mathbb{R}^N}&\left(\frac{1}{|x|^{\mu}}\ast \Big||u|^{2_{\mu}^{\ast}}-|\sigma|^{2_{\mu}^{\ast}}\Big| \right)\big|\alpha_i\big|^{2_{\mu}^{\ast}-2}\alpha_i\big|W_i\big|^{2_{\mu}^{\ast}-2}W_i\rho\\&
\leq
\sum\limits_{i=1}^{\kappa}\big|\alpha_i\big|^{2_{\mu}^{\ast}-1}\int_{\mathbb{R}^N}\bigg[\frac{1}{|x|^{\mu}}\ast\Big(2_{\mu}^{\ast}|\sigma|^{2_{\mu}^{\ast}-1}|\rho|+\widetilde{C}_1\big(|\sigma|^{2_{\mu}^{\ast}-2}|\rho|^2+|\rho|^{2_{\mu}^{\ast}}\big)\Big)\bigg]W_i^{2_{\mu}^{\ast}-1}|\rho|
\\&\leq \widetilde{C}_6\Big(\big\|\nabla\rho\big\|_{L^{2}}^3+\big\|\nabla\rho\big\|_{L^{2}}^\frac{3N-\mu-2}{N-2}\Big)
+2_{\mu}^{\ast}\sum\limits_{i=1}^{\kappa}\big|\alpha_i\big|^{2_{\mu}^{\ast}-1}\int_{\mathbb{R}^N}\Big(\frac{1}{|x|^{\mu}}\ast\big(|\sigma|^{2_{\mu}^{\ast}-1}|\rho|\big)\Big)W_i^{2_{\mu}^{\ast}-1}|\rho|.
\endaligned
\end{equation*}
Using \eqref{EP1} we have
\begin{equation*}
\aligned
\sum\limits_{i=1}^{\kappa}\int_{\mathbb{R}^N}\Big(\frac{1}{|x|^{\mu}}\ast\Big|\sum\limits_{i=1}^{\kappa}\alpha_iW_i\Big|^{2_{\mu}^{\ast}}\Big)|\alpha_i|^{2_{\mu}^{\ast}-2}\alpha_i|W_i|^{2_{\mu}^{\ast}-2}W_i\rho=0.
\endaligned
\end{equation*}
Substituting these estimates into the first term on the right-hand side of \eqref{Sta1}, we have that
\begin{equation}\label{up}
\aligned
&\int_{\mathbb{R}^N}\left(\frac{1}{|x|^{\mu}}\ast |u|^{2_{\mu}^{\ast}}\right)|u|^{2_{\mu}^{\ast}-2}u\rho\\ \leq&
(2_{\mu}^{\ast}-1)\sum\limits_{i=1}^ {\kappa}\big|\alpha_i\big|^{2_{\mu}^{\ast}}\int_{\mathbb{R}^N}\Big(\frac{1}{|x|^{\mu}}\ast\big|W_i\big|^{2_{\mu}^{\ast}}\Big)|\sigma|^{2_{\mu}^{\ast}-2}|\rho|^2
+2_{\mu}^{\ast}\sum\limits_{i=1}^ {\kappa}\big|\alpha_i\big|^{2_{\mu}^{\ast}-1}\int_{\mathbb{R}^N}\Big(\frac{1}{|x|^{\mu}}\ast\big(|\sigma|^{2_{\mu}^{\ast}-1}|\rho|\big)\Big)W_i^{2_{\mu}^{\ast}-1}|\rho|
\\+&\widetilde{C}_{N,1}\Big(\big\|\nabla\rho\big\|_{L^{2}}^3+\big\|\nabla\rho\big\|_{L^{2}}^4+\big\|\nabla\rho\big\|_{L^{2}}^5+\big\|\nabla\rho\big\|_{L^{2}}^\frac{3N-\mu-2}{N-2}
+\big\|\nabla\rho\big\|_{L^{2}}^\frac{4N-\mu-4}{N-2}+\big\|\nabla\rho\big\|_{L^{2}}^\frac{4N-2\mu}{N-2}+\big\|\nabla\rho\big\|_{L^{2}}^\frac{2N-\mu}{N-2}\Big)\\+&
\widetilde{C}_{N,2}\Big((I_1)+(I_2)+(I_3)+(I_4)+(I_5)\Big).
\endaligned
\end{equation}

We consider $(I_1)$.
For any $i\neq j$, $3\leq N<6-\mu$, by Proposition~\ref{FPU1}, it is clear that
\begin{equation}\label{I11}
\aligned
\big\|W_i^{2_{\mu}^{\ast}-2}W_j\big\|_{L^{\frac{2N}{N-\mu+2}}}
&=\Big(\int_{\mathbb{R}^N}W_i^{\frac{2N(2_{\mu}^{\ast}-2)}{N-\mu+2}}W_j^{\frac{2N}{N-\mu+2}}\Big)^{\frac{N-\mu+2}{2N}}\\&\approx
\min\Big(\frac{\lambda_i}{\lambda_j},\frac{\lambda_j}{\lambda_i},\frac{1}{\lambda_i\lambda_j|\xi_i-\xi_j|^2}\Big)^{\frac{N-2}{2}}
=\int_{\mathbb{R}^N}W_i^{2^{\ast}-1}W_j.
\endaligned
\end{equation}
Combining this estimate we obtain
\begin{equation}\label{I1}
\aligned (I_1) \lesssim\sum\limits_{1\leq i\neq j\leq\kappa}\bigg(\big\|\nabla\rho\big\|_{L^{2}}^2+\big\|\nabla\rho\big\|_{L^{2}}^3+\big\|\nabla\rho\big\|_{L^{2}}^\frac{3N-\mu-2}{N-2}\bigg)
\int_{\mathbb{R}^N}W_i^{2^{\ast}-1}W_j
\endaligned
\end{equation}
and
\begin{equation*}
\aligned
(I_4)\lesssim \sum\limits_{1\leq i\neq j\leq\kappa}\big\|\nabla\rho\big\|_{L^{2}}
\int_{\mathbb{R}^N}W_i^{2^{\ast}-1}W_j.
\endaligned
\end{equation*}
Similar to the above argument, for any $i\neq j$, it holds that
\begin{equation}\label{I2}
\aligned
\big\|W_i^{2_{\mu}^{\ast}-1}W_j\big\|_{L^{\frac{2N}{2N-\mu}}}\approx\int_{\mathbb{R}^N}W_i^{2^{\ast}-1}W_j.
\endaligned
\end{equation}
So
\begin{equation*}
\aligned
(I_2)\lesssim \sum\limits_{1\leq i\neq j\leq\kappa}\Big(\big\|\nabla\rho\big\|_{L^{2}}^2+\big\|\nabla\rho\big\|_{L^{2}}^3+\big\|\nabla\rho\big\|_{L^{2}}^\frac{2N-\mu}{N-2}\Big)
\int_{\mathbb{R}^N}W_i^{2^{\ast}-1}W_j
\endaligned
\end{equation*}
and
\begin{equation*}
\aligned
(I_3)\lesssim \sum\limits_{1\leq i\neq j\leq\kappa}\big\|\nabla\rho\big\|_{L^{2}}
\int_{\mathbb{R}^N}W_i^{2^{\ast}-1}W_j.
\endaligned
\end{equation*}
Then, we find from \eqref{I11} and \eqref{I2} that
\begin{equation*}
\aligned
(I_5)\lesssim \sum\limits_{1\leq i\neq j\leq\kappa}\big\|\nabla\rho\big\|_{L^{2}}\Big(\int_{\mathbb{R}^N}W_i^{2^{\ast}-1}W_j\Big)^2.
\endaligned
\end{equation*}
Now note that the identity
\begin{equation}\label{eq3.18}
\int_{\mathbb{R}^N}\frac{1}{|x-y|^{2\gamma}}\Big(\frac{1}{1+|y|^{2}}\Big)^{N-\gamma}dy
=I(\gamma)\Big(\frac{1}{1+|x|^{2}}\Big)^{\gamma},\ \ 0 <\gamma < \frac{N}{2},
\end{equation}
where
$$
I(\gamma)=\frac{\pi^{\frac{N}{2}}\Gamma(\frac{N-2\gamma}{2})}{\Gamma(N-\gamma)} \ \ \mbox{with}\ \Gamma(\gamma)=\int_0^{+\infty} x^{\gamma-1}e^{-x}\,dx, \quad \gamma>0,
$$
then we get
\begin{equation*}
|x|^{-\mu}\ast |W_i|^{2_{\mu}^{\ast}}
=\int_{\mathbb{R}^N}\frac{W_i^{2_{\mu}^{\ast}}(y)}{|x-y|^{\mu}}dy
=\mathcal{\widetilde{Q}}W_i^{2^{\ast}-2_{\mu}^{\ast}}(x),
\end{equation*}
where  $$\mathcal{\widetilde{Q}}=I(\gamma)S^{\frac{(N-\mu)(2-N)}{4(N-\mu+2)}}[C(N,\mu)]^{\frac{2-N}{2(N-\mu+2)}}[N(N-2)]^{\frac{N-2}{4}}.$$
Therefore, by applying the Proposition \ref{estimate2} implies that
\begin{equation*}
\begin{split}
&\sum\limits_{1\leq i\neq j\leq\kappa}\bigg(\big\|\nabla\rho\big\|_{L^{2}}^2+\big\|\nabla\rho\big\|_{L^{2}}^3+\big\|\nabla\rho\big\|_{L^{2}}^\frac{2N-\mu}{N-2}+\big\|\nabla\rho\big\|_{L^{2}}^\frac{3N-\mu-2}{N-2}\bigg)
\int_{\mathbb{R}^N}W_i^{2^{\ast}-1}W_j\\ \lesssim &\epsilon\bigg(\big\|\nabla\rho\big\|_{L^{2}}^2+\big\|\nabla\rho\big\|_{L^{2}}^3+\big\|\nabla\rho\big\|_{L^{2}}^\frac{2N-\mu}{N-2}+\big\|\nabla\rho\big\|_{L^{2}}^\frac{3N-\mu-2}{N-2}\bigg)\\
+&\bigg(\big\|\nabla\rho\big\|_{L^{2}}^2+\big\|\nabla\rho\big\|_{L^{2}}^3+\big\|\nabla\rho\big\|_{L^{2}}^\frac{2N-\mu}{N-2}+\big\|\nabla\rho\big\|_{L^{2}}^\frac{3N-\mu-2}{N-2}\bigg)\Big\|\Delta u+\Big(\frac{1}{|x|^{\mu}}\ast |u|^{2_{\mu}^{\ast}}\Big)|u|^{2_{\mu}^{\ast}-2}u\Big\|_{(\mathcal{D}^{1,2}(\mathbb{R}^N))^{-1}}\\
+&\Big\|\nabla\rho\Big\|_{L^2}^{\min\big(4,\frac{3N-\mu+2}{N-2}\big)}+\Big\|\nabla\rho\Big\|_{L^2}^{\min\big(5,\frac{4N-\mu-4}{N-2}\big)}+\Big\|\nabla\rho\Big\|_{L^2}^{\min\big(\frac{4N-\mu-4}{N-2},\frac{3N-2\mu+2}{N-2}\big)}
+\Big\|\nabla\rho\Big\|_{L^2}^{\min\big(\frac{4}{5N-\mu-2},\frac{4N-2\mu+4}{N-2}\big)},
\end{split}
\end{equation*}
using also that
\begin{equation*}
\begin{split}
\sum\limits_{1\leq i\neq j\leq\kappa}&\big\|\nabla\rho\big\|_{L^{2}}\Big(\int_{\mathbb{R}^N}W_i^{2^{\ast}-1}W_j\Big)^2\\
&\lesssim \epsilon^2\big\|\nabla\rho\big\|_{L^2}^3+\big\|\nabla\rho\big\|_{L^{2}}\Big\|\Delta u+\Big(\frac{1}{|x|^{\mu}}\ast |u|^{2_{\mu}^{\ast}}\Big)|u|^{2_{\mu}^{\ast}-2}u\Big\|_{(\mathcal{D}^{1,2}(\mathbb{R}^N))^{-1}}^2
+\Big\|\nabla\rho\Big\|_{L^2}^{\min\big(5,\frac{3N-2\mu+2}{N-2}\big)},
\end{split}
\end{equation*}
and
\begin{equation*}
\begin{split}
\sum\limits_{1\leq i\neq j\leq\kappa}&\big\|\nabla\rho\big\|_{L^{2}}
\int_{\mathbb{R}^N}W_i^{2^{\ast}-1}W_j \\&\lesssim \epsilon\big\|\nabla\rho\big\|_{L^2}^2+\big\|\nabla\rho\big\|_{L^{2}}\Big\|\Delta u+\Big(\frac{1}{|x|^{\mu}}\ast |u|^{2_{\mu}^{\ast}}\Big)|u|^{2_{\mu}^{\ast}-2}u\Big\|_{(\mathcal{D}^{1,2}(\mathbb{R}^N))^{-1}}+\Big\|\nabla\rho\Big\|_{L^2}^{\min\big(3,\frac{2N-\mu}{N-2}\big)}.
\end{split}
\end{equation*}

Now we are in a position to prove Theorem \ref{Figalli}. To this end, choosing $\epsilon>0$ such that
$$
\nu(N,\kappa)+\epsilon\widetilde{C}_{N,3}\widetilde{C}_{N,5}+\epsilon\widetilde{C}_{N,4}\widetilde{C}_{N,5}<1,
$$
we are able to conclude that
\begin{equation*}
\begin{split}
\int_{\mathbb{R}^N}\left(\frac{1}{|x|^{\mu}}\ast |u|^{2_{\mu}^{\ast}}\right)|u|^{2_{\mu}^{\ast}-2}u\rho &\leq\Big[\nu(N,\kappa)+\epsilon\widetilde{C}_{N,3}\widetilde{C}_{N,5}+\epsilon\widetilde{C}_{N,4}\widetilde{C}_{N,5}\Big]\big\|\nabla\rho\big\|_{L^2}^2
\\&+(J_1)+(J_2)+(J_3)+(J_4)+(J_5)+(J_6)+(J_7),
\end{split}
\end{equation*}
where
\begin{equation*}
\begin{split}
&(J_1):=
\widetilde{C}_{N,6}\big\|\nabla\rho\big\|_{L^{2}}
\Big\|\Delta u+\Big(\frac{1}{|x|^{\mu}}\ast |u|^{2_{\mu}^{\ast}}\Big)|u|^{2_{\mu}^{\ast}-2}u\Big\|_{(\mathcal{D}^{1,2}(\mathbb{R}^N))^{-1}},
\\&
(J_2):=
\widetilde{C}_{N,7}\big\|\nabla\rho\big\|_{L^{2}}\Big\|\Delta u+\Big(\frac{1}{|x|^{\mu}}\ast |u|^{2_{\mu}^{\ast}}\Big)|u|^{2_{\mu}^{\ast}-2}u\Big\|_{(\mathcal{D}^{1,2}(\mathbb{R}^N))^{-1}}^2,
\\&
(J_3):=
\widetilde{C}_{N,8}\bigg(\big\|\nabla\rho\big\|_{L^{2}}^2+\big\|\nabla\rho\big\|_{L^{2}}^3\bigg)
\Big\|\Delta u+\Big(\frac{1}{|x|^{\mu}}\ast |u|^{2_{\mu}^{\ast}}\Big)|u|^{2_{\mu}^{\ast}-2}u\Big\|_{(\mathcal{D}^{1,2})^{-1}(\mathbb{R}^N)},\\&
(J_4):=
\widetilde{C}_{N,9}\bigg(\big\|\nabla\rho\big\|_{L^{2}}^{\frac{2N-\mu}{N-2}}+\big\|\nabla\rho\big\|_{L^{2}}^{\frac{3N-\mu-2}{N-2}}\bigg)
\Big\|\Delta u+\Big(\frac{1}{|x|^{\mu}}\ast |u|^{2_{\mu}^{\ast}}\Big)|u|^{2_{\mu}^{\ast}-2}u\Big\|_{(\mathcal{D}^{1,2})^{-1}(\mathbb{R}^N)},\\&
(J_5):=\widetilde{C}_{N,10}\bigg(\big\|\nabla\rho\big\|_{L^{2}}^3+\big\|\nabla\rho\big\|_{L^{2}}^4+\big\|\nabla\rho\big\|_{L^{2}}^5+\big\|\nabla\rho\big\|_{L^{2}}^\frac{3N-\mu-2}{N-2}
+\big\|\nabla\rho\big\|_{L^{2}}^\frac{4N-\mu-4}{N-2}+\big\|\nabla\rho\big\|_{L^{2}}^\frac{4N-2\mu}{N-2}+\big\|\nabla\rho\big\|_{L^{2}}^\frac{2N-\mu}{N-2}\bigg)
,\\&
(J_6):=\widetilde{C}_{N,11}\bigg(\Big\|\nabla\rho\Big\|_{L^2}^{\min\big(5,\frac{3N-2\mu+2}{N-2}\big)}+\Big\|\nabla\rho\Big\|_{L^2}^{\min\big(4,\frac{3N-\mu+2}{N-2}\big)}+\Big\|\nabla\rho\Big\|_{L^2}^{\min\big(5,\frac{4N-\mu-4}{N-2}\big)}
+\Big\|\nabla\rho\Big\|_{L^2}^{\min\big(5,\frac{3N-2\mu+2}{N-2}\big)}\bigg),\\&
(J_7):=\widetilde{C}_{N,12}\bigg(\Big\|\nabla\rho\Big\|_{L^2}^{\min\big(\frac{4N-\mu-4}{N-2},\frac{3N-2\mu+2}{N-2}\big)}
+\Big\|\nabla\rho\Big\|_{L^2}^{\min\big(\frac{4}{5N-\mu-2},\frac{4N-2\mu+4}{N-2}\big)}\bigg).
\end{split}
\end{equation*}
As a consequence, together with \eqref{Sta1} and $\|\nabla\rho\big\|_{L^{2}}\ll1$,
 \begin{equation*}
\begin{split}
\Big[1-\nu(N,\kappa)&-\epsilon\widetilde{C}_{N,3}\widetilde{C}_{N,5}-\epsilon\widetilde{C}_{N,4}\widetilde{C}_{N,5}\Big]\big\|\nabla\rho\big\|_{L^2}^2
\\&\lesssim \Big\|\nabla\rho\Big\|_{L^{2}}
\Big\|\Delta u+\Big(\frac{1}{|x|^{\mu}}\ast |u|^{2_{\mu}^{\ast}}\Big)|u|^{2_{\mu}^{\ast}-2}u\Big\|_{(\mathcal{D}^{1,2}(\mathbb{R}^N))^{-1}}+(J_4)+(J_5)+(J_6)+(J_7).
\end{split}
\end{equation*}
Combining this inequality with $\|\nabla\rho\big\|_{L^{2}}\ll1$ yields the conclusion
\begin{equation*}
\begin{split}
\big\|\nabla\rho\big\|_{L^2}
\lesssim
\Big\|\Delta u+\Big(\frac{1}{|x|^{\mu}}\ast |u|^{2_{\mu}^{\ast}}\Big)|u|^{2_{\mu}^{\ast}-2}u\Big\|_{(\mathcal{D}^{1,2}(\mathbb{R}^N))^{-1}}.
\end{split}
\end{equation*}
On the other hand, by \eqref{www} of the Proposition \ref{estimate2}, the conclusion follows.

$\mathbf{Case\ 2.}$
 The exponents $\mu$ and $N$ satisfy
 \begin{equation*}
\left\lbrace
\begin{aligned}
N&=4, \hspace{2mm}\mu\in(2,N),\\
N&\geq5,\hspace{2mm}\mu\in(0,N)\hspace{2mm}\mbox{and}\hspace{2mm}\mu\in(1,4].
\end{aligned}
\right.
\end{equation*}
We find the elementary inequality
\begin{equation*}
\Big|u|u|^{2_{\mu}^{\ast}-2}-\sigma|\sigma|^{2_{\mu}^{\ast}-2}\Big|\leq (2_{\mu}^{\ast}-1)|\sigma|^{2_{\mu}^{\ast}-2}|\rho|+\widetilde{C}_3|\rho|^{2_{\mu}^{\ast}-1},\quad\text{for}~~1<2_{\mu}^\ast-1\leq2.
\end{equation*}
The argument of the statement is identical to the proof of case 1. Combining this equality with \eqref{pmr1} yields the conclusion.

    \qed

\begin{remark}
In the proof of Theorem \ref{Figalli}, it is pointed out that the evaluations when $N\geq6-\mu$ is not sufficient to obtain the desired conclusion. In fact, for $N>6-\mu$, we have that
\begin{equation*}
\aligned
\big\|W_i^{2_{\mu}^{\ast}-2}W_j\big\|_{L^{\frac{2N}{N-\mu+2}}}
&=\Big(\int_{\mathbb{R}^N}W_i^{\frac{2N(2_{\mu}^{\ast}-2)}{N-\mu+2}}W_j^{\frac{2N}{N-\mu+2}}\Big)^{\frac{N-\mu+2}{2N}}\\&
\approx\Big(\int_{\mathbb{R}^N}W_i^{2^{\ast}-1}W_j\Big)^{2_{\mu}^{\ast}-2}\gg\int_{\mathbb{R}^N}W_i^{2^{\ast}-1}W_j.
\endaligned
\end{equation*}
For $N=6-\mu$, we get that
\begin{equation*}
\aligned
\big\|W_i^{2_{\mu}^{\ast}-2}W_j\big\|_{L^{\frac{2N}{N-\mu+2}}}
\approx\Big(\int_{\mathbb{R}^N}W_i^{2^{\ast}-1}W_j\Big)^{\frac{3(8-\mu)}{24}}\Big|\log\Big(\int_{\mathbb{R}^N}W_i^{2^{\ast}-1}W_j\Big)\Big|^{\frac{8-\mu}{12}}\gg\int_{\mathbb{R}^N}W_i^{2^{\ast}-1}W_j.
\endaligned
\end{equation*}
\end{remark}

\textbf{Proof of  Corollary~\ref{Figalli2}.}
We can now prove Corollary~\ref{Figalli2} by using Theorems \ref{Figalli} and \ref{F4}.
In view of Theorem~\ref{F4} there exists $\epsilon>0$ such that
$$\Big\|\Delta u+\left(\frac{1}{|x|^{\mu}}\ast |u|^{2_{\mu}^{\ast}}\right)|u|^{2_\mu^{\ast}-2}u\Big\|_{(\mathcal{D}^{1,2}(\mathbb{R}^N))^{-1}}\leq\epsilon.$$
Then, for any $\delta>0$, we have that
$$
\Big\|\nabla u-\sum_{i=1}^{\kappa}\nabla W[\xi_i,\lambda_i]\Big\|_{L^2}\leq\delta,
$$
where $(W[\xi_i,\lambda_i])_{1\leq i\leq\kappa}$ is a $\delta$-interacting family of Talenti bubbles.
Combining Theorem~\ref{Figalli} and the conclusion follows.

    \qed

\textbf{Proof of Corollary \ref{Figalli-0}.} For dimension $N\geq3$ and a single bubble case, combining the arguments of Theorem \ref{Figalli} and Corollary \ref{Figalli2}, the result easily follows.

    \qed

\section{Proofs of Propositions 4.2 and 4.3}\label{section5}
We complete here the proof of Theorem \ref{Figalli} by proving Propositions~\ref{estimate1} and \ref{estimate2}.
The following two Propositions are the analog of Lemmas 3.8-3.9 of \cite{FG20}, which are very useful in the sequel.
\begin{proposition}\label{fai1}
Let $N\geq1$. Given a point $\tilde{x}\in\mathbb{R}^N$ and two radii $0<r<R$, there exists a Lipschitz bump function $\psi=\psi_{\tilde{x},r,R}:\mathbb{R}^N\rightarrow[0,1]$ such that $\psi\equiv1$ in $B_r(\tilde{x})$, $\psi\equiv$ in $B_{R}(\tilde{x})$ , and
\begin{equation*}
\int_{\mathbb{R}^N}|\nabla\psi|^N\lesssim\log\Big(\frac{R}{r}\Big)^{1-N}.
\end{equation*}
\end{proposition}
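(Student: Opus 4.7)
\textbf{Proof proposal for Proposition \ref{fai1}.} The plan is to take the canonical logarithmic radial cutoff, which is the natural candidate because it equidistributes the gradient in $L^N$ over the annulus. By translation invariance we may assume $\tilde{x}=0$. Define $\psi:\mathbb{R}^N\to[0,1]$ by
\begin{equation*}
\psi(x)=\begin{cases}1, & |x|\leq r,\\[1mm] \dfrac{\log(R/|x|)}{\log(R/r)}, & r<|x|<R,\\[2mm] 0, & |x|\geq R.\end{cases}
\end{equation*}
Then $\psi$ is continuous, takes values in $[0,1]$, equals $1$ on $B_r$ and $0$ on $\mathbb{R}^N\setminus B_R$, and it is Lipschitz on $\mathbb{R}^N$ because in the transition annulus $|\nabla\psi(x)|=\bigl(|x|\log(R/r)\bigr)^{-1}$, which is bounded on $r\leq|x|\leq R$.

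The key estimate is the direct computation in polar coordinates:
\begin{equation*}
\int_{\mathbb{R}^N}|\nabla\psi|^N\,dx=\frac{\omega_{N-1}}{\bigl(\log(R/r)\bigr)^{N}}\int_r^R\frac{\rho^{N-1}}{\rho^{N}}\,d\rho=\frac{\omega_{N-1}}{\bigl(\log(R/r)\bigr)^{N}}\,\log\!\Big(\frac{R}{r}\Big)=\frac{\omega_{N-1}}{\bigl(\log(R/r)\bigr)^{N-1}},
\end{equation*}
which gives the claimed bound with an explicit dimensional constant $\omega_{N-1}$ (the surface area of $\mathbb{S}^{N-1}$). The logarithmic profile is essentially optimal: it is precisely the choice for which $|\nabla\psi(x)|^N \rho^{N-1}$ has a constant radial density $1/\rho$, so that the radial integral produces exactly one factor of $\log(R/r)$ in the numerator, yielding the sharp $(N-1)$-th power of the logarithm in the denominator.

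There is no real obstacle here, just one bookkeeping point worth flagging: the statement as typed reads ``$\psi\equiv$ in $B_R(\tilde x)$'', which is clearly a typo for ``$\psi\equiv 0$ outside $B_R(\tilde x)$'' (since the support assertion is what the proof and the subsequent use will require). The construction above delivers exactly that, and Lipschitz regularity follows from the uniform bound on $|\nabla\psi|$ on the annulus together with $\psi$ being constant on the complement.
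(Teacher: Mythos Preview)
Your proof is correct and is exactly the standard logarithmic cutoff construction. The paper does not supply its own proof of this proposition; it simply records the statement as the analog of Lemma~3.8 in \cite{FG20}, and your argument is precisely that lemma's construction, so there is nothing to add.
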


\begin{proposition}\label{fai2}
For any $N\geq3$, $\kappa\in\mathbb{N}$, and $\epsilon>0$, there exists $\delta=\delta(N,\kappa,\epsilon)>0$ such that if $(W[\xi_i,\lambda_i])_{1\leq i\leq\kappa}$ is a $\delta$-interacting family of Talenti bubbles. Then for any $1\leq i\leq\kappa$ there exists a Lipschitz bump function $\Psi_i:\mathbb{R}^N\rightarrow[0,1]$ such that the following hold:
\begin{enumerate}
		\item Almost all mass of $\big(|x|^{-\mu}\ast W_i^{2_{\mu}^*}\big)W_i^{2_{\mu}^*}$ is in the region $\{\Psi_i=1\}$, that is
$$\int_{\{\Psi_i=1\}}\Big(|x|^{-\mu}\ast W_i^{2_{\mu}^*}\Big)W_i^{2_{\mu}^*}=\mathcal{\widetilde{Q}}_0S^{\frac{(N-\mu)(2-N)}{4(N-\mu+2)}}\int_{\{\Psi_i=1\}}W_i^{2^{\ast}}\geq\mathcal{\widetilde{Q}}_0(1-\epsilon)S^{\frac{(N-\mu)(2-N)}{4(N-\mu+2)}+N},$$
where  $$\mathcal{\widetilde{Q}}_0=I(\gamma)[C(N,\mu)]^{\frac{2-N}{2(N-\mu+2)}}[N(N-2)]^{\frac{N-2}{4}},$$

$$I(\gamma)=\frac{\pi^{\frac{N}{2}}\Gamma(\frac{N-2\gamma}{2})}{\Gamma(N-\gamma)},\ \Gamma(\gamma)=\int_0^{+\infty} x^{\gamma-1}e^{-x}\,dx ,\ \ \forall \gamma>0,$$  $C(N,\mu)$ is the sharp constant of HLS inequality given as in (\ref{cnu}).
\item In the region $\{\Psi_i>0\}$ it holds $\epsilon W_i>W_j$ for any $j\neq i$.
\item The $L^N$-norm of the gradient is small, that is
$$\big\|\nabla\Psi_i\big\|_{L^N}\leq\epsilon.$$
\item
For any $j\neq i$ such that $\lambda_j\leq \lambda_i$, it holds that
\begin{equation*}
\frac{\sup_{\{\Psi_i>0\}}W_j}{\inf_{\{\Psi_i>0\}}W_j}\leq1+\epsilon.
\end{equation*}
\end{enumerate}
\end{proposition}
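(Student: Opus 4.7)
The plan is to adapt the construction of Figalli--Glaudo \cite[Lemma 3.9]{FG20} to the nonlocal setting; thanks to the Riesz identity used earlier (giving $|x|^{-\mu}\ast W_i^{2_\mu^\ast}\propto W_i^{2^\ast-2_\mu^\ast}$), the four required properties are in fact purely geometric statements about Talenti profiles on $\mathbb{R}^N$, so the nonlocal flavour enters only through a rescaling of constants in property (1). Fix $i\in\{1,\dots,\kappa\}$. Since $(W[\xi_j,\lambda_j])_{1\le j\le \kappa}$ is $\delta$-interacting, for every $j\ne i$
\begin{equation*}
\frac{\lambda_i}{\lambda_j}+\frac{\lambda_j}{\lambda_i}+\lambda_i\lambda_j|\xi_i-\xi_j|^2 \;\ge\; M, \qquad M=M(\delta):=\delta^{-2/(N-2)}\to\infty \text{ as }\delta\to 0.
\end{equation*}
I will take $\Psi_i$ to be a bump of the form $\psi_{\xi_i,r_i/\lambda_i,R_i/\lambda_i}$ supplied by Proposition~\ref{fai1}, where the two radii $r_i<R_i$ are chosen so that $r_i\to\infty$ and $R_i/r_i\to\infty$ (both as $\delta\to0$), yet $R_i$ stays well below the geometric thresholds dictated by $M$.

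Property (1) follows from the fact that $W_i^{2^\ast}$ concentrates at scale $1/\lambda_i$ around $\xi_i$: taking $r_i\to\infty$ makes the mass of $W_i^{2^\ast}$ outside $B_{r_i/\lambda_i}(\xi_i)$ as small as desired, and the Riesz identity converts this into the analogous statement for $(|x|^{-\mu}\ast W_i^{2_\mu^\ast})W_i^{2_\mu^\ast}$. Property (3) is immediate from Proposition~\ref{fai1}: $\|\nabla\Psi_i\|_{L^N}\lesssim \log(R_i/r_i)^{(1-N)/N}$, so a large enough ratio $R_i/r_i$ yields $\|\nabla\Psi_i\|_{L^N}\le \epsilon$. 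These two properties therefore only constrain $r_i$ and $R_i/r_i$ logarithmically.

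Properties (2) and (4) are where the $\delta$-interacting hypothesis actually enters. I split the indices $j\ne i$ into the wider class $\{\lambda_j\le\lambda_i\}$ and the narrower class $\{\lambda_j>\lambda_i\}$. For the wider class, the interaction lower bound forces either $\lambda_i/\lambda_j\gtrsim M$ or $\lambda_i\lambda_j|\xi_i-\xi_j|^2\gtrsim M$; in the first subcase the support of $\Psi_i$, of Euclidean radius $R_i/\lambda_i$, is tiny on the natural scale $1/\lambda_j$ provided $R_i\ll\sqrt M$, which simultaneously yields oscillation $\le 1+\epsilon$ for $W_j$ on that set (property (4)) and the pointwise bound $W_j\le \epsilon W_i$ (property (2)); in the second subcase the separation $|\xi_i-\xi_j|$ already dominates $R_i/\lambda_i$ and gives both bounds in a similar manner. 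For the narrower class, the centers $\xi_j$ cannot both lie inside $\operatorname{supp}(\Psi_i)$ and satisfy the interaction bound when $R_i\ll\sqrt M$, so such centers are automatically excluded; if necessary I will further excise a small ball $B_{r_j'}(\xi_j)$ from $\{\Psi_i>0\}$ using a second application of Proposition~\ref{fai1}, whose gradient contribution is again logarithmic in the excised scale and can be absorbed into the $\epsilon$ budget for (3).

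The main obstacle is precisely the simultaneous tuning of $r_i, R_i$ and (where needed) the auxiliary excision radii $r_j'$ against the four competing constraints. What makes the argument work is the asymmetry in the growth rates: $M$ blows up polynomially in $\delta^{-1}$, while (1) and (3) only ask for logarithmic growth in $M$, so a choice such as $r_i=M^{1/4}$, $R_i=M^{1/2}/C(\epsilon)$ accommodates all four bounds at once, provided $\delta$ is chosen small enough in terms of $N,\kappa,\epsilon$. The auxiliary excision step for anomalously close narrower bubbles is the subtlest technical point, but it is handled exactly as in \cite[Lemma 3.9]{FG20} since it depends only on the profile of a single Talenti bubble.
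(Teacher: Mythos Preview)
Your proposal is correct and follows essentially the same approach as the paper. The paper's own proof is even terser: it says the argument is ``very similar to that of Lemma 3.9 of \cite{FG20}'' and only sketches property (1), using precisely the Riesz identity $|x|^{-\mu}\ast W_i^{2_\mu^\ast}=\mathcal{\widetilde{Q}}_0 S^{\frac{(N-\mu)(2-N)}{4(N-\mu+2)}}W_i^{2^\ast-2_\mu^\ast}$ to reduce the mass concentration of the nonlocal quantity to that of $W_i^{2^\ast}$, together with the excision of small balls around the narrower bubbles---exactly the mechanism you describe.
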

\begin{proof}
The proof of this proposition is very similar to that of Lemma 3.9 of \cite{FG20}. Therefore, we will just sketch the proof of $(1)$. Without loss of generality, we only consider one of the indices, i.e. $W:=W_\kappa$.
Assume that $j\in\{1,\cdots,\kappa-1\}$ such that $\lambda_j>1$ and $|\xi_j|<2r$. Let $r_j\in(0,\infty)$ be the only positive real number such that
$$\epsilon\Big(\frac{1}{1+r^2}\Big)\frac{N-2}{2}=\Big(\frac{\lambda_j}{1+\lambda_j^2|r_j|^2}\Big)^{\frac{N-2}{2}}.$$
Then $\delta$ is sufficiently small to tell us that $r_j\leq\epsilon^2$ for any $j\in\{1,\cdots,\kappa-1\}$. Notice that
\begin{equation*}
|x|^{-\mu}\ast |W(x)|^{2_{\mu}^{\ast}}
=\mathcal{\widetilde{Q}}S^{\frac{(N-\mu)(2-N)}{4(N-\mu+2)}}W^{2^{\ast}-2_{\mu}^{\ast}}(x),
\end{equation*}
where  $$\mathcal{\widetilde{Q}}_0=I(\gamma)[C(N,\mu)]^{\frac{2-N}{2(N-\mu+2)}}[N(N-2)]^{\frac{N-2}{4}}~~\text{with}~~I(\gamma)=\frac{\pi^{\frac{N}{2}}\Gamma(\frac{N-2\gamma}{2})}{\Gamma(N-\gamma)}.$$
Then we denote $\Psi=\Psi_\kappa$ and imply that
\begin{equation*}
\begin{split}
\int_{\{\Psi<1\}}\Big(|x|^{-\mu}\ast W^{2_{\mu}^{\ast}}\Big)W^{2_{\mu}^{\ast}}&=\mathcal{\widetilde{Q}}_0S^{\frac{(N-\mu)(2-N)}{4(N-\mu+2)}}\int_{\{\Psi<1\}}W^{2^{\ast}}(x)\leq\mathcal{\widetilde{Q}}_0S^{\frac{(N-\mu)(2-N)}{4(N-\mu+2)}}\int_{B_{\varepsilon r}^{c}(0)}W^{2^{\ast}}\\&+\mathcal{\widetilde{Q}}_0S^{\frac{(N-\mu)(2-N)}{4(N-\mu+2)}}\sum\limits_{j}\int_{B_{\epsilon^{-1}r_j}(\xi_j)}W^{2^{\ast}}\\&
\leq \mathcal{\widetilde{Q}}_0S^{\frac{(N-\mu)(2-N)}{4(N-\mu+2)}}\int_{B_{\varepsilon r}^{c}(0)}W^{2^{\ast}}+\widetilde{C}_{N}\mathcal{\widetilde{Q}}_0S^{\frac{(N-\mu)(2-N)}{4(N-\mu+2)}}\kappa\epsilon^N\leq\mathcal{\widetilde{Q}}_0\epsilon S^{\frac{(N-\mu)(2-N)}{4(N-\mu+2)}+N},
\end{split}
\end{equation*}
by $\epsilon$ is sufficient small and choosing $r=r(\epsilon)$ large enough. Thus we find that
\begin{equation*}
\begin{split}
\int_{\{\Psi=1\}}\Big(|x|^{-\mu}\ast W^{2_{\mu}^*}\Big)W^{2_{\mu}^*}&=\mathcal{\widetilde{Q}}_0S^{\frac{(N-\mu)(2-N)}{4(N-\mu+2)}}\int_{\{\Psi=1\}}W^{2^{\ast}}\\&\geq\mathcal{\widetilde{Q}}_0S^{\frac{(N-\mu)(2-N)}{4(N-\mu+2)}}\int_{\{\Psi=1\}}W^{2^{\ast}}
+\mathcal{\widetilde{Q}}_0S^{\frac{(N-\mu)(2-N)}{4(N-\mu+2)}}\Big(\int_{\{\Psi<1\}}W^{2^{\ast}}-\varepsilon\mathcal{S}^N\Big).
\end{split}
\end{equation*}
The
conclusion follows.

\end{proof}

\subsection{Proof of Proposition \ref{estimate1}.}
Proposition \ref{fai2}-(2) can be used to deduce that for choosing $\epsilon=o(1)$,
    \begin{equation}\label{fai3}
    \begin{split}
(2_{\mu}^{\ast}-1)\sum\limits_{i=1}^ {\kappa}\big|\alpha_i\big|^{2_{\mu}^{\ast}}&\int_{\mathbb{R}^N}\Big(\frac{1}{|x|^{\mu}}\ast W_j^{2_{\mu}^{\ast}}\Big)\sigma^{2_{\mu}^{\ast}-2}\rho^2\\& \leq\big(1+o(1)\big)(2_{\mu}^{\ast}-1)\sum_{i=1}^{\kappa}\int_{\mathbb{R}^N}\Big(\frac{1}{|x|^{\mu}}\ast \big(\sum_{j=1}^{\kappa}W_j^{2_{\mu}^{\ast}}\big)\Big)W_i^{2_{\mu}^{\ast}-2}(\Psi_i\rho)^2\\&\hspace{8mm}+(2_{\mu}^{\ast}-1)\sum_{j=1}^{\kappa}\int_{\{\sum\Psi_i<1\}}\Big(\frac{1}{|x|^{\mu}}\ast W_j^{2_{\mu}^{\ast}}\Big)\sigma^{2_{\mu}^{\ast}-2}\rho^2.
\end{split}
\end{equation}
In virtue of Proposition \ref{fai2}-(2), we get that
\begin{equation*}
\begin{split}
\sum_{i=1}^{\kappa}\int_{\mathbb{R}^N}\Big(\frac{1}{|x|^{\mu}}\ast W_1^{2_{\mu}^{\ast}}\Big)W_i^{2_{\mu}^{\ast}-2}(\Psi_i\rho)^2&\leq\big(1+o(1)\big)\int_{\mathbb{R}^N}\Big(\frac{1}{|x|^{\mu}}\ast W_1^{2_{\mu}^{\ast}}\Big)W_1^{2_{\mu}^{\ast}-2}(\Psi_1\rho)^2,\\&
\cdots,
\end{split}
\end{equation*}
\begin{equation*}
\sum_{i=1}^{\nu}\int_{\mathbb{R}^N}\Big(\frac{1}{|x|^{\mu}}\ast W_{i-1}^{2_{\mu}^{\ast}}\Big)W_i^{2_{\mu}^{\ast}-2}(\Psi_i\rho)^2\leq\big(1+o(1)\big)\int_{\mathbb{R}^N}\Big(\frac{1}{|x|^{\mu}}\ast W_{i-1}^{2_{\mu}^{\ast}}\Big)W_{i-1}^{2_{\mu}^{\ast}-2}(\Psi_{i-1}\rho)^2,
\end{equation*}
\begin{equation*}
\begin{split}
\sum_{i=1}^{\kappa}\int_{\mathbb{R}^N}\Big(\frac{1}{|x|^{\mu}}\ast W_{i+1}^{2_{\mu}^{\ast}}\Big)W_i^{2_{\mu}^{\ast}-2}(\Psi_i\rho)^2&\leq\big(1+o(1)\big)\int_{\mathbb{R}^N}\Big(\frac{1}{|x|^{\mu}}\ast W_{i+1}^{2_{\mu}^{\ast}}\Big)W_{i+1}^{2_{\mu}^{\ast}-2}(\Psi_{i+1}\rho)^2,\\&
\cdots,
\end{split}
\end{equation*}
\begin{equation*}
\sum_{i=1}^{\kappa}\int_{\mathbb{R}^N}\Big(\frac{1}{|x|^{\mu}}\ast W_{\kappa}^{2_{\mu}^{\ast}}\Big)W_i^{2_{\mu}^{\ast}-2}(\Psi_i\rho)^2\leq\big(1+o(1)\big)\int_{\mathbb{R}^N}\Big(\frac{1}{|x|^{\mu}}\ast W_{\kappa}^{2_{\mu}^{\ast}}\Big)W_{\kappa}^{2_{\mu}^{\ast}-2}(\Psi_{\kappa}\rho)^2.
\end{equation*}
Hence we are able to conclude that
\begin{equation}\label{conclude}
\begin{split}
   \big(1+o(1)\big)(2_{\mu}^{\ast}-1)\sum_{i=1}^{\kappa}\int_{\mathbb{R}^N}&\Big(\frac{1}{|x|^{\mu}}\ast \big(\sum_{j=1}^{\kappa}W_j^{2_{\mu}^{\ast}}\big)\Big)W_i^{2_{\mu}^{\ast}-2}(\Psi_i\rho)^2
   \\&\leq\big(1+o(1)\big)(2_{\mu}^{\ast}-1)\sum_{i=1}^{\kappa}\int_{\mathbb{R}^N}\Big(\frac{1}{|x|^{\mu}}\ast W_i^{2_{\mu}^{\ast}}\Big)W_i^{2_{\mu}^{\ast}-2}(\Psi_i\rho)^2.
   \end{split}
\end{equation}
Then, we similarly compute and get
 \begin{equation*}
 \begin{split}
\big(1+o(1)\big)2_{\mu}^{\ast}\sum_{i=1}^{\kappa}\int_{\mathbb{R}^N}&\Big(\frac{1}{|x|^{\mu}}\ast \big(W_i^{2_{\mu}^{\ast}-1}\Psi_i\rho\big)\Big)\big(\sum_{j=1}^{\kappa}W_j^{2_{\mu}^{\ast}-1}\big)\Psi_i\rho\\ &\leq
\big(1+o(1)\big)2_{\mu}^{\ast}\sum_{i=1}^{\kappa}\int_{\mathbb{R}^N}\Big(\frac{1}{|x|^{\mu}}\ast \big(W_i^{2_{\mu}^{\ast}-1}\Psi_i\rho\big)\Big)W_i^{2_{\mu}^{\ast}-1}\Psi_i\rho.
\end{split}
\end{equation*}
Thus application of Proposition \ref{fai2}-(2) ensures that
    \begin{equation}\label{rtbdlb}
    \begin{split}
    2_{\mu}^{\ast}&\sum\limits_{j=1}^{\kappa}\big|\alpha_i\big|^{2_{\mu}^{\ast}-1}\int_{\mathbb{R}^N}\Big(\frac{1}{|x|^{\mu}}\ast\big(\sigma^{2_{\mu}^{\ast}-1}\rho\big)\Big)W_j^{2_{\mu}^{\ast}-1}\rho\\ &\leq\big(1+o(1)\big)2_{\mu}^{\ast}\sum_{i=1}^{\kappa}\int_{\mathbb{R}^N}\Big(\frac{1}{|x|^{\mu}}\ast \big(W_i^{2_{\mu}^{\ast}-1}\Psi_i\rho\big)\Big)W_i^{2_{\mu}^{\ast}-1}\Psi_i\rho\\
    &\hspace{4mm}+
    2_{\mu}^{\ast}\sum_{i=1}^{\kappa}\int_{\mathbb{R}^N}\int_{\{\sum\Psi_i<1\}}\frac{\sigma^{2_{\mu}^{\ast}-1}\rho(y)W_i^{2_{\mu}^{\ast}-1}(x)\Psi_i(x)\rho(x)}{|x-y|^{\mu}}
    \\&\hspace{4mm}+2_{\mu}^{\ast}\sum_{i=1}^{\kappa}\int_{\{\sum\Psi_i<1\}}\int_{\mathbb{R}^N}\frac{W_i^{2_{\mu}^{\ast}-1}(x)\Psi_i(x)\rho(x)W_i^{2_{\mu}^{\ast}-1}(y)\rho(y)}{|x-y|^{\mu}}\\&\hspace{4mm}+
    2_{\mu}^{\ast}\sum_{i=1}^{\kappa}\int_{\{\sum\Psi_i<1\}}\int_{\{\sum\Psi_i<1\}}\frac{\sigma^{2_{\mu}^{\ast}-1}\rho(y)W_i^{2_{\mu}^{\ast}-1}(x)\rho(x)}{|x-y|^{\mu}}.
    \end{split}
    \end{equation}
Combined with Proposition \ref{fai2}-(1), we compute and get
  \begin{equation}\label{f01}
  \begin{split}
    \sum_{j=1}^{\kappa}\int_{\{\sum\Psi_i<1\}}\Big(\frac{1}{|x|^{\mu}}\ast W_j^{2_{\mu}^{\ast}}\Big)\sigma^{2_{\mu}^{\ast}-2}\rho^2 &\leq\sum_{j=1}^{\kappa}\Big(\int_{\mathbb{R}^N}W_j^{2^{\ast}}\Big)^{\frac{2N-\mu}{2N}}
    \Big(\int_{\{\sum\Psi_i<1\}}\sigma^{2^{\ast}}\Big)^{\frac{4-\mu}{2N}}\big\|\nabla\rho\big\|_{L^2}^2 \\&\leq o(1)\big\|\nabla\rho\big\|_{L^2}^2,
    \end{split}
      \end{equation}
      and
 \begin{equation}\label{f0}
  \begin{split}
  \sum_{i=1}^{\kappa}\int_{\mathbb{R}^N}\int_{\{\sum\Psi_i<1\}}\frac{\sigma^{2_{\mu}^{\ast}-1}\rho(y)W_i^{2_{\mu}^{\ast}-1}(x)\Psi_i(x)\rho(x)}{|x-y|^{\mu}}
  &\leq\sum_{i=1}^{\kappa}\Big(\int_{\mathbb{R}^N}W_i^{2^{\ast}}\Big)^{\frac{N-\mu+2}{2N}}
    \Big(\int_{\{\sum\Psi_i<1\}}\sigma^{2^{\ast}}\Big)^{\frac{N-\mu+2}{2N}}\big\|\nabla\rho\big\|_{L^2}^2 \\&\leq o(1)\big\|\nabla\rho\big\|_{L^2}^2.
 \end{split}
      \end{equation}
Similar arguments give that
 \begin{equation}\label{f2}
  \begin{split}
 &\sum_{i=1}^{\kappa}\int_{\{\sum\Psi_i<1\}}\int_{\mathbb{R}^N}\frac{W_i^{2_{\mu}^{\ast}-1}(x)\Psi_i(x)\rho(x)W_i^{2_{\mu}^{\ast}-1}(y)\rho(y)}{|x-y|^{\mu}}\leq o(1)\big\|\nabla\rho\big\|_{L^2}^2,
 \end{split}
      \end{equation}
and
\begin{equation}\label{f3}
  \begin{split}
\sum_{i=1}^{\kappa}\int_{\{\sum\Psi_i<1\}}\int_{\{\sum\Psi_i<1\}}\frac{\sigma^{2_{\mu}^{\ast}-1}\rho(y)W_i^{2_{\mu}^{\ast}-1}(x)\rho(x)}{|x-y|^{\mu}}\leq o(1)\big\|\nabla\rho\big\|_{L^2}^2.
 \end{split}
      \end{equation}
We claim that there exists a constant $\nu(N,\kappa)$ less than $1$ such that
    \begin{equation}\label{rt}
    \begin{split}
  (2_{\mu}^{\ast}-1)\int_{\mathbb{R}^N}&\Big(\frac{1}{|x|^{\mu}}\ast \big(W_i^{2_{\mu}^{\ast}}\big)\Big)W_i^{2_{\mu}^{\ast}-2}(\Psi_i\rho)^2+2_{\mu}^{\ast}\int_{\mathbb{R}^N}\Big(\frac{1}{|x|^{\mu}}\ast \big(W_i^{2_{\mu}^{\ast}-1}\Psi_i\rho\big)\Big)W_i^{2_{\mu}^{\ast}-1}\Psi_i\rho\\&\leq\nu(N,\kappa)\int_{\mathbb{R}^N}|\nabla(\Psi_i\rho)|^2+o(1)\|\nabla\rho\|_{L^{2}}^2.
    \end{split}
    \end{equation}
To verify \eqref{rt},  our goal is to show that $\Psi_i\rho$ almost satisfies the orthogonality conditions. Let us first define $\widehat{f}: \mathbb{R}^N\rightarrow\mathbb{R}$ be, up to scaling, the functions $W_i$. Then
    \begin{equation}\label{rt1}
    \begin{split}
\bigg|\int_{\mathbb{R}^N}\Big(\frac{1}{|x|^{\mu} }\ast W_i^{2_\mu^*}\Big)W_i^{2_\mu^*-2}(\Psi_i\rho) \widehat{f}\bigg|&=\bigg|\int_{\mathbb{R}^N}\Big(\frac{1}{|x|^{\mu} } \ast W_i^{2_\mu^*}\Big)W_i^{2_\mu^*-2}\rho\widehat{f}\big(1-\Psi_i\big)\bigg|\\&\leq\bigg|\int_{\{\Psi_i<1\}}\Big(\frac{1}{|x|^{\mu} } \ast W_i^{2_\mu^*}\Big)W_i^{2_\mu^*-2}\rho\widehat{f}\bigg|\\&
\leq\Big(\int_{\mathbb{R}^N}W_i^{2^{\ast}}\Big)^{\frac{2N-\mu}{2N}}\Big(\int_{\{\Psi_i<1\}}W_i^{2^{\ast}}\Big)^{\frac{4-\mu}{2N}}\|\widehat{f}\|_{L^{2^\ast}}\|\rho\|_{L^{2^\ast}}\leq o(1)\|\nabla\rho\|_{L^{2}},
\end{split}
\end{equation}
by the orthogonality conditions \eqref{EP1}.
Now, we define $\widehat{g}: \mathbb{R}^N\rightarrow\mathbb{R}$ be, up to scaling, the functions  $\frac{\partial W_i}{\partial\lambda}$. We have that
\begin{equation}\label{rt2}
   \begin{split}
   &\bigg|2_\mu^*\int_{\mathbb{R}^N}\Big(\frac{1}{|x|^{\mu} } \ast \big(W_{i}^{2_\mu^*-1}\Psi_i\rho\big)\Big)W_i^{2_\mu^*-1}\widehat{g}
+(2_\mu^*-1)\int_{\mathbb{R}^N}\Big(\frac{1}{|x|^{\mu} }\ast W_{i}^{2_\mu^*}\Big)W_{i}^{2_\mu^*-2}\widehat{g}\Psi_i\rho\bigg|
\\=&\bigg|2_\mu^*\int_{\mathbb{R}^N}\Big(\frac{1}{|x|^{\mu} } \ast \big(W_{i}^{2_\mu^*-1}\rho(1-\Psi_i)\big)\Big)W_i^{2_\mu^*-1}\widehat{g}
+(2_\mu^*-1)\int_{\mathbb{R}^N}\Big(\frac{1}{|x|^{\mu} } \ast W_{i}^{2_\mu^*}\Big)W_{i}^{2_\mu^*-2}\widehat{g}\rho(1-\Psi_i)\bigg|\\ \leq&
2_\mu^*\Big(\int_{\{\Psi_i<1\}}W_i^{2^{\ast}}\Big)^{\frac{N-\mu+2}{2N}}\|W_i\|_{L^{2^\ast}}^{2_{\mu}^{*}-1}\|\widehat{g}\|_{L^{2^\ast}}\|\rho\|_{L^{2^\ast}}+
(2_\mu^*-1)\Big(\int_{\{\Psi_i<1\}}W_i^{2^{\ast}}\Big)^{\frac{4-\mu}{2N}}\|W_i\|_{L^{2^\ast}}^{2_{\mu}^{*}}\|\widehat{g}\|_{L^{2^\ast}}\|\rho\|_{L^{2^\ast}}
\\ \leq& o(1)\|\nabla\rho\|_{L^{2}},
   \end{split}
\end{equation}
by the orthogonality conditions \eqref{EP2}.
Analogously, let $\widehat{h}: \mathbb{R}^N\rightarrow\mathbb{R}$ be, up to scaling, the functions $\frac{\partial W_i}{\partial\xi_i}$. Then we clearly have that
\begin{equation}\label{rt3}
 \begin{split}
   &\bigg|2_\mu^*\int_{\mathbb{R}^N}\Big(\frac{1}{|x|^{\mu} } \ast \big(W_{i}^{2_\mu^*-1}\Psi_i\rho\big)\Big)W_i^{2_\mu^*-1}\widehat{h}
+(2_\mu^*-1)\int_{\mathbb{R}^N}\Big(\frac{1}{|x|^{\mu} } \ast W_{i}^{2_\mu^*}\Big)W_{i}^{2_\mu^*-2}\widehat{h}\Psi_i\rho\bigg|
\\ \leq&
2_\mu^*\Big(\int_{\{\Psi_i<1\}}W_i^{2^{\ast}}\Big)^{\frac{N-\mu+2}{2N}}\|W_i\|_{L^{2^\ast}}^{2_{\mu}^{*}-1}\|\widehat{h}\|_{L^{2^\ast}}\|\rho\|_{L^{2^\ast}}+
(2_\mu^*-1)\Big(\int_{\{\Psi_i<1\}}W_i^{2^{\ast}}\Big)^{\frac{4-\mu}{2N}}\|W_i\|_{L^{2^\ast}}^{2_{\mu}^{*}}\|\widehat{h}\|_{L^{2^\ast}}\|\rho\|_{L^{2^\ast}}
\\ \leq& o(1)\|\nabla\rho\|_{L^{2}},
   \end{split}
\end{equation}
by the orthogonality conditions \eqref{EP3}.
As a consequence, \eqref{rt1}, \eqref{rt2} and \eqref{rt3} tell us that $\Psi_i\rho$ almost orthogonal to $\widehat{f}$, $\widehat{g}$ and $\widehat{h}$, as desired.

Combining Proposition \ref{pro-4.1} and the fact that an orthogonal basis of the eigenfunctions space of $\frac{\mathcal{L}[u]}{\mathcal{R}[u]}$ composed of the functions $W_i$, $\frac{\partial W_i}{\partial\lambda}$ and $\frac{\partial W_i}{\partial\xi_i}$, so that we get that
\begin{equation*}
    \begin{split}
  \int_{\mathbb{R}^N}\Big(\frac{1}{|x|^{\mu} }\ast W_i^{2_{\mu}^{\ast}}\Big)&W_i^{2_{\mu}^{\ast}-2}(\Psi_i\rho)^2+\int_{\mathbb{R}^N}\Big(\frac{1}{|x|^{\mu} }\ast \big(W_i^{2_{\mu}^{\ast}-1}\Psi_i\rho\big)\Big)W_i^{2_{\mu}^{\ast}-1}\Psi_i\rho\\&\leq
  \frac{1}{\hbar}\bigg[\int_{\mathbb{R}^N}|\nabla(\Psi_i\rho)|^2+\int_{\mathbb{R}^N}\Big(\frac{1}{|x|^{\mu}} \ast W_i^{2_\mu^*}\Big)W_i^{2_\mu^*-2}(\Psi_i\rho)^2\bigg]+o(1)\|\nabla\rho\|_{L^{2}}^2.
    \end{split}
    \end{equation*}
This yields
\begin{equation*}
    \begin{split}
  \int_{\mathbb{R}^N}\bigg[(2_{\mu}^{\ast}-1)\Big(\frac{1}{|x|^{\mu}}\ast W_i^{2_{\mu}^{\ast}}\Big)W_i^{2_{\mu}^{\ast}-2}(\Psi_i\rho)^2+2_{\mu}^{\ast}\Big(\frac{1}{|x|^{\mu}}\ast \big(W_i^{2_{\mu}^{\ast}-1}\Psi_i\rho\big)\Big)W_i^{2_{\mu}^{\ast}-1}\Psi_i\rho\bigg]\leq
\nu&\int_{\mathbb{R}^N}|\nabla(\Psi_i\rho)|^2\\&+o(1)\|\nabla\rho\|_{L^{2}}^2,
    \end{split}
    \end{equation*}
yielding the result.

Now, combining H\"{o}lder and Sobolev inequalities, one can infer from Proposition \ref{fai2} that
\begin{equation}\label{conclude-1}
\begin{split}
\sum_{i=1}^{\kappa}\int_{\mathbb{R}^N}|\nabla(\Psi_i\rho)|^2&\leq\sum_{i=1}^{\kappa}\int_{\mathbb{R}^N}|\nabla \rho|^2\Psi_i^2+\sum_{i=1}^{\kappa}\|\nabla\Psi_i\|_{L^{N}}^2\|\rho\|_{L^{2^{\ast}}}^{2}+\sum_{i=1}^{\nu}\|\nabla\Psi_i\|_{L^{N}}\|\Psi_i\|_{L^{\infty}}\|\rho\|_{L^{2^{\ast}}}\|\nabla\rho\|_{L^{2}}\\&
\leq\int_{\mathbb{R}^N}|\nabla \rho|^2+o(1)\|\nabla\rho\|_{L^{2}}^2.
\end{split}
 \end{equation}
Therefore, putting \eqref{fai3}-\eqref{conclude-1} together it follows that
\begin{equation*}
\begin{split}
&(2_{\mu}^{\ast}-1)\sum\limits_{i=1}^ {\kappa}\big|\alpha_i\big|^{2_{\mu}^{\ast}}\int_{\mathbb{R}^N}\Big(\frac{1}{|x|^{\mu}}\ast W_j^{2_{\mu}^{\ast}}\Big)\sigma^{2_{\mu}^{\ast}-2}\rho^2
+2_{\mu}^{\ast}\sum\limits_{j=1}^{\kappa}\big|\alpha_i\big|^{2_{\mu}^{\ast}-1}\int_{\mathbb{R}^N}\Big(\frac{1}{|x|^{\mu}}\ast\big(\sigma^{2_{\mu}^{\ast}-1}\rho\big)\Big)W_j^{2_{\mu}^{\ast}-1}\rho \\ \leq& \big(1+o(1)\big)\bigg[(2_{\mu}^{\ast}-1)\sum_{i=1}^{\kappa}\int_{\mathbb{R}^N}\Big(\frac{1}{|x|^{\mu}}\ast W_i^{2_{\mu}^{\ast}}\Big)W_i^{2_{\mu}^{\ast}-2}(\Psi_i\rho)^2+2_{\mu}^{\ast}\sum_{i=1}^{\kappa}\int_{\mathbb{R}^N}\Big(\frac{1}{|x|^{\mu}}\ast \big(W_i^{2_{\mu}^{\ast}-1}\Psi_i\rho\big)\Big)W_i^{2_{\mu}^{\ast}-1}\Psi_i\rho\bigg]\\+&o(1)\|\nabla\rho\|_{L^{2}}^2\\ \leq&\nu(N,\kappa)\int_{\mathbb{R}^N}|\nabla\rho|^2.
\end{split}
\end{equation*}
Hence the result easily follow.

    \qed
\subsection{Proof of Proposition \ref{estimate2}.}
For simplicity of notations, we write $\mathcal{W}$ instead of $W_{i}$, $\alpha$ instead of $\alpha_i$ and $\mathcal{Z}$ instead of $\sum\limits_{j\neq i}\alpha_jW_{j}$ respectively and let $\Psi=\Psi_i$ be the bump function from Proposition~\ref{fai2} if there is no possibility of confusion.

Firstly,  we have the identity
\begin{equation}\label{a1}
     \begin{split}
     &\big(\alpha-\alpha^{2\cdot2_{\mu}^{\ast}-1}\big)\Big(\frac{1}{|x|^{\mu}}\ast \mathcal{W}^{2_{\mu}^{\ast}}\Big)\mathcal{W}^{2_{\mu}^{\ast}-1}-2_{\mu}^*\alpha^{2(2_{\mu^*}-1)}\Big(\frac{1}{|x|^{\mu}}\ast \mathcal{W}^{2_{\mu}^{\ast}-1}\mathcal{Z}\Big)\mathcal{W}^{2_{\mu}^{\ast}-1}\\&\hspace{4mm}-\big(2_{\mu}^*-1\big)\alpha^{2(2_{\mu^*}-1)}\Big(\frac{1}{|x|^{\mu}}\ast \mathcal{W}^{2_{\mu}^{\ast}}\Big)\mathcal{W}^{2_{\mu}^{\ast}-2}\mathcal{Z}\\=&
\Delta\rho+\bigg[-\Delta u-\Big(\frac{1}{|x|^{\mu}}\ast |u|^{2_{\mu}^{\ast}}\Big)|u|^{2_{\mu}^{\ast}-2}u\bigg]-\sum\alpha_i\Big(\frac{1}{|x|^{\mu}}\ast W_i^{2_{\mu}^{\ast}}\Big)W_i^{2_{\mu}^{\ast}-1}\\&
+2_{\mu}^{\ast}
\Big(\frac{1}{|x|^{\mu}}\ast \big((\alpha \mathcal{W})^{2_{\mu}^{\ast}-1}\rho\big)\Big)\big(\alpha \mathcal{W}\big)^{2_{\mu}^{\ast}-1}+\big(2_{\mu}^{\ast}-1\big)\Big(\frac{1}{|x|^{\mu}}\ast \big(\alpha \mathcal{W}\big)^{2_{\mu}^{\ast}}\Big)\big(\alpha \mathcal{W}\big)^{2_{\mu}^{\ast}-2}\rho
\\&
+\bigg[\Big(\frac{1}{|x|^{\mu}}\ast |\sigma+\rho|^{2_{\mu}^{\ast}}\Big)|\sigma+\rho|^{2_{\mu}^{\ast}-2}(\sigma+\rho
)-\Big(\frac{1}{|x|^{\mu}}\ast \sigma^{2_{\mu}^{\ast}}\Big)\sigma^{2_{\mu}^{\ast}-1}\bigg]
\\&+\bigg[-2_{\mu}^{\ast}
\Big(\frac{1}{|x|^{\mu}}\ast \big(\sigma^{2_{\mu}^{\ast}-1}\rho\big)\Big)\sigma^{2_{\mu}^{\ast}-1}-\big(2_{\mu}^{\ast}-1\big)\Big(\frac{1}{|x|^{\mu}}\ast \sigma^{2_{\mu}^{\ast}}\Big)\sigma^{2_{\mu}^{\ast}-2}\rho\bigg]
\\&+\bigg[2_{\mu}^{\ast}
\Big(\frac{1}{|x|^{\mu}}\ast \big(\sigma^{2_{\mu}^{\ast}-1}\rho\big)\Big)\sigma^{2_{\mu}^{\ast}-1}+\big(2_{\mu}^{\ast}-1\big)\Big(\frac{1}{|x|^{\mu}}\ast \sigma^{2_{\mu}^{\ast}}\Big)\sigma^{2_{\mu}^{\ast}-2}\rho\bigg]
\\&
+\bigg[-2_{\mu}^{\ast}
\Big(\frac{1}{|x|^{\mu}}\ast \big((\alpha \mathcal{W})^{2_{\mu}^{\ast}-1}\rho\big)\Big)\big(\alpha \mathcal{W}\big)^{2_{\mu}^{\ast}-1}-\big(2_{\mu}^{\ast}-1\big)\Big(\frac{1}{|x|^{\mu}}\ast \big(\alpha \mathcal{W}\big)^{2_{\mu}^{\ast}}\Big)\big(\alpha \mathcal{W}\big)^{2_{\mu}^{\ast}-2}\rho\bigg]
\\&
\end{split}
    \end{equation}
   \begin{equation*}
     \begin{split}
&+\bigg[\Big(\frac{1}{|x|^{\mu}}\ast\big(\alpha \mathcal{W}+\mathcal{Z}\big)^{2_{\mu}^{\ast}}\Big)\big(\alpha \mathcal{W}+\mathcal{Z}\big)^{2_{\mu}^{\ast}-1}-\Big(\frac{1}{|x|^{\mu}}\ast \big(\alpha \mathcal{W}\big)^{2_{\mu}^{\ast}}\Big)\big(\alpha \mathcal{W}\big)^{2_{\mu}^{\ast}-1}\bigg]\\&+\bigg[-2_{\mu}^{\ast}
\Big(\frac{1}{|x|^{\mu}}\ast \big(\alpha \mathcal{W}\big)^{2_{\mu}^{\ast}-1}\mathcal{Z}\Big)\big(\alpha \mathcal{W}\big)^{2_{\mu}^{\ast}-1}-\big(2_{\mu}^{\ast}-1\big)\Big(\frac{1}{|x|^{\mu}}\ast \big(\alpha \mathcal{W}\big)^{2_{\mu}^{\ast}}\Big)\big(\alpha \mathcal{W}\big)^{2_{\mu}^{\ast}-2}\mathcal{Z}\bigg].
\end{split}
    \end{equation*}

Now we evaluate some terms on the right-hand side of \eqref{a1}. Using Proposition~\ref{fai2}-(2), in the region $\{\Psi>0\}$, we can obtain
\begin{equation*}
\sum\limits_{j\neq i}\alpha_j\Big(\frac{1}{|x|^{\mu}}\ast W_j^{2_{\mu}^{\ast}}\Big)W_j^{2_{\mu}^{\ast}-1}=o\Big[\Big(\frac{1}{|x|^{\mu}}\ast \mathcal{W}^{2_{\mu}^{\ast}}\Big) \mathcal{W}^{2_{\mu}^{\ast}-2}\mathcal{Z}\Big].
\end{equation*}
By a straightforward computation
\begin{equation*}
\begin{split}
2_{\mu}^{\ast}&\bigg[
\Big(\frac{1}{|x|^{\mu}}\ast \big(\sigma^{2_{\mu}^{\ast}-1}\rho\big)\Big)\sigma^{2_{\mu}^{\ast}-1}
-\Big(\frac{1}{|x|^{\mu}}\ast \big((\alpha \mathcal{W})^{2_{\mu}^{\ast}-1}\rho\big)\Big)\big(\alpha \mathcal{W}\big)^{2_{\mu}^{\ast}-1}\bigg]\\&+
\big(2_{\mu}^{\ast}-1\big)\bigg[\Big(\frac{1}{|x|^{\mu}}\ast \sigma^{2_{\mu}^{\ast}}\Big)\sigma^{2_{\mu}^{\ast}-2}\rho
-\Big(\frac{1}{|x|^{\mu}}\ast \big(\alpha \mathcal{W}\big)^{2_{\mu}^{\ast}}\Big)\big(\alpha \mathcal{W}\big)^{2_{\mu}^{\ast}-2}\rho\bigg]\\
=&o\bigg[\Big(\frac{1}{|x|^{\mu}}\ast \big(\sigma^{2_{\mu}^{\ast}-1}|\rho|\big)\Big)\mathcal{W}^{2_{\mu}^{\ast}-1}\bigg]+o\bigg[\Big(\frac{1}{|x|^{\mu}}\ast \big(\mathcal{W}^{2_{\mu}^{\ast}-1}|\rho|\big)\Big)\mathcal{W}^{2_{\mu}^{\ast}-1}\bigg]+
o\bigg[\Big(\frac{1}{|x|^{\mu}}\ast \sigma^{2_{\mu}^{\ast}}\Big)\mathcal{W}^{2_{\mu}^{\ast}-2}\rho\bigg]\\&+o\bigg[\Big(\frac{1}{|x|^{\mu}}\ast \mathcal{W}^{2_{\mu}^{\ast}}\Big)W^{2_{\mu}^{\ast}-2}|\rho|\bigg],
\end{split}
\end{equation*}
and
\begin{equation*}
\begin{split}
\Big(\frac{1}{|x|^{\mu}}&\ast\big(\alpha \mathcal{W}+\mathcal{Z}\big)^{2_{\mu}^{\ast}}\Big)\big(\alpha \mathcal{W}+\mathcal{Z}\big)^{2_{\mu}^{\ast}-1}-\Big(\frac{1}{|x|^{\mu}}\ast \big(\alpha \mathcal{W}\big)^{2_{\mu}^{\ast}}\Big)\big(\alpha \mathcal{W}\big)^{2_{\mu}^{\ast}-1}\\&\hspace{4mm}-2_{\mu}^{\ast}
\Big(\frac{1}{|x|^{\mu}}\ast \big((\alpha \mathcal{W})^{2_{\mu}^{\ast}-1}\mathcal{Z}\big)\Big)\big(\alpha \mathcal{W}\big)^{2_{\mu}^{\ast}-1}-\big(2_{\mu}^{\ast}-1\big)\Big(\frac{1}{|x|^{\mu}}\ast \big(\alpha \mathcal{W}\big)^{2_{\mu}^{\ast}}\Big)\big(\alpha \mathcal{W}\big)^{2_{\mu}^{\ast}-2}\mathcal{Z}\\
=&\bigg[\frac{1}{|x|^{\mu}}\ast\Big((\alpha \mathcal{W}+\mathcal{Z})^{2_{\mu}^{\ast}}-(\alpha \mathcal{W})^{{2_{\mu}^{\ast}}}-{2_{\mu}^{\ast}}(\alpha \mathcal{W})^{{2_{\mu}^{\ast}}-1}\mathcal{Z}\Big)\bigg]\big(\alpha \mathcal{W}+\mathcal{Z}\big)^{2_{\mu}^{\ast}-1}\\&+
\Big(\frac{1}{|x|^{\mu}}\ast\big(\alpha \mathcal{W}\big)^{2_{\mu}^{\ast}}\Big)\Big[(\alpha \mathcal{W}+\mathcal{Z})^{2_{\mu}^{\ast}-1}-(\alpha \mathcal{W})^{{2_{\mu}^{\ast}}-1}-({2_{\mu}^{\ast}}-1)(\alpha \mathcal{W})^{{2_{\mu}^{\ast}}-2}\mathcal{Z}\Big]\\
&+2_{\mu}^{\ast}
\Big(\frac{1}{|x|^{\mu}}\ast \big((\alpha \mathcal{W})^{2_{\mu}^{\ast}-1}\mathcal{Z}\big)\Big)\Big[(\alpha \mathcal{W}+\mathcal{Z})^{2_{\mu}^{\ast}-1}-(\alpha \mathcal{W})^{{2_{\mu}^{\ast}}-1}-(2_{\mu}^{\ast}-1)(\alpha \mathcal{W})^{{2_{\mu}^{\ast}}-2}\mathcal{Z}\Big]\\&+2_{\mu}^{\ast}(2_{\mu}^{\ast}-1)\Big(\frac{1}{|x|^{\mu}}\ast\big((\alpha \mathcal{W})^{2_{\mu}^{\ast}-1}\mathcal{Z}\big)\Big)\big(\alpha \mathcal{W}\big)^{2_{\mu}^{\ast}-2}\mathcal{Z}\\
=&o\bigg[\Big(\frac{1}{|x|^{\mu}}\ast\big(\mathcal{W}^{2_{\mu}^{\ast}-1}\mathcal{Z}\big)\Big) \mathcal{W}^{2_{\mu}^{\ast}-1}\bigg]+
o\bigg[\Big(\frac{1}{|x|^{\mu}}\ast \mathcal{W}^{2_{\mu}^{\ast}}\Big) \mathcal{W}^{2_{\mu}^{\ast}-2}\mathcal{Z}\bigg].
\end{split}
\end{equation*}
Furthermore, by applying an elementary inequality, we have that
\begin{equation*}
\begin{split}
&\Big(\frac{1}{|x|^{\mu}}\ast |\sigma+\rho|^{2_{\mu}^{\ast}}\Big)|\sigma+\rho|^{2_{\mu}^{\ast}-2}(\sigma+\rho
)-\Big(\frac{1}{|x|^{\mu}}\ast \sigma^{2_{\mu}^{\ast}}\Big)\sigma^{2_{\mu}^{\ast}-1}-2_{\mu}^{\ast}
\Big(\frac{1}{|x|^{\mu}}\ast \big(\sigma^{2_{\mu}^{\ast}-1}\rho\big)\Big)\sigma^{2_{\mu}^{\ast}-1}\\&\hspace{4mm}-\big(2_{\mu}^{\ast}-1\big)\Big(\frac{1}{|x|^{\mu}}\ast \sigma^{2_{\mu}^{\ast}}\Big)\sigma^{2_{\mu}^{\ast}-2}\rho\\&
\lesssim
\Big(\frac{1}{|x|^{\mu}}\ast \rho^{2_{\mu}^{\ast}}\Big)\rho^{2_{\mu}^{\ast}-1}+\Big(\frac{1}{|x|^{\mu}}\ast \sigma^{2_{\mu}^{\ast}}\Big)\rho^{2_{\mu}^{\ast}-1}+\Big(\frac{1}{|x|^{\mu}}\ast \sigma^{2_{\mu}^{\ast}}\Big)\sigma^{2_{\mu}^{\ast}-3}\rho^2+\Big(\frac{1}{|x|^{\mu}}\ast \big(\sigma^{2_{\mu}^{\ast}-1}\rho\big)\Big)\sigma^{2_{\mu}^{\ast}-2}\rho\\&
\end{split}
    \end{equation*}
   \begin{equation*}
     \begin{split}
&+\Big(\frac{1}{|x|^{\mu}}\ast\big(\sigma^{2_{\mu}^{\ast}-1}\rho\big)\Big)\sigma^{2_{\mu}^{\ast}-3}\rho^2+
\Big(\frac{1}{|x|^{\mu}}\ast \big(\sigma^{2_{\mu}^{\ast}-1}\rho\big)\Big)\rho^{2_{\mu}^{\ast}-1}+\Big(\frac{1}{|x|^{\mu}}\ast \big(\sigma^{2_{\mu}^{\ast}-2}\rho^2)\Big)\sigma^{2_{\mu}^{\ast}-2}\rho\\&+\Big(\frac{1}{|x|^{\mu}}\ast \rho^{2_{\mu}^{\ast}}\Big)\sigma^{2_{\mu}^{\ast}-3}\rho^2+\Big(\frac{1}{|x|^{\mu}}\ast \big(\sigma^{2_{\mu}^{\ast}-2}\rho^2\big)\Big)\sigma^{2_{\mu}^{\ast}-3}\rho^2+\Big(\frac{1}{|x|^{\mu}}\ast \big(\sigma^{2_{\mu}^{\ast}-2}\rho^2\big)\Big)\rho^{2_{\mu}^{\ast}-1}\\&+\Big(\frac{1}{|x|^{\mu}}\ast \rho^{2_{\mu}^{\ast}}\Big)\sigma^{2_{\mu}^{\ast}-1}+\Big(\frac{1}{|x|^{\mu}}\ast \rho^{2_{\mu}^{\ast}}\Big)\sigma^{2_{\mu}^{\ast}-2}\rho\\&=:\Gamma(\rho).
\end{split}
\end{equation*}
Therefore, combining \eqref{a1} and the preceding estimates, we obtain the estimate
\begin{equation}\label{a2}
     \begin{split}
     &\big(\alpha-\alpha^{2\cdot2_{\mu}^{\ast}-1}\big)\Big(\frac{1}{|x|^{\mu}}\ast \mathcal{W}^{2_{\mu}^{\ast}}\Big)\mathcal{W}^{2_{\mu}^{\ast}-1}-\big(2_{\mu}^*\alpha^{2(2_{\mu^*}-1)}+o(1)\big)\Big(\frac{1}{|x|^{\mu}}\ast \big(\mathcal{W}^{2_{\mu}^{\ast}-1}\mathcal{Z}\big)\Big)\mathcal{W}^{2_{\mu}^{\ast}-1}\\&\hspace{4mm}-\big(\big(2_{\mu}^*-1\big)\alpha^{2(2_{\mu^*}-1)}+o(1)\big)\Big(\frac{1}{|x|^{\mu}}\ast \mathcal{W}^{2_{\mu}^{\ast}}\Big)\mathcal{W}^{2_{\mu}^{\ast}-2}\mathcal{Z}-
\Delta\rho-\left[-\Delta u-\Big(\frac{1}{|x|^{\mu}}\ast |u|^{2_{\mu}^{\ast}}\Big)|u|^{2_{\mu}^{\ast}-2}u\right]\\&
\hspace{4mm}-2_{\mu}^{\ast}
\Big(\frac{1}{|x|^{\mu}}\ast \big((\alpha \mathcal{W})^{2_{\mu}^{\ast}-1}\rho\big)\Big)\big(\alpha \mathcal{W}\big)^{2_{\mu}^{\ast}-1}-\big(2_{\mu}^{\ast}-1\big)\Big(\frac{1}{|x|^{\mu}}\ast \big(\alpha \mathcal{W}\big)^{2_{\mu}^{\ast}}\Big)\big(\alpha \mathcal{W}\big)^{2_{\mu}^{\ast}-2}\rho\\
&\lesssim \Gamma(\rho)+o\Big[\Big(\frac{1}{|x|^{\mu}}\ast \big(\sigma^{2_{\mu}^{\ast}-1}|\rho|\big)\Big)\mathcal{W}^{2_{\mu}^{\ast}-1}\Big]+o\Big[\Big(\frac{1}{|x|^{\mu}}\ast \big(\mathcal{W}^{2_{\mu}^{\ast}-1}|\rho|\big)\Big)\mathcal{W}^{2_{\mu}^{\ast}-1}\Big]\\&\hspace{4mm}+
o\Big[\Big(\frac{1}{|x|^{\mu}}\ast \sigma^{2_{\mu}^{\ast}}\Big)\mathcal{W}^{2_{\mu}^{\ast}-2}|\rho|\Big]+o\Big[\Big(\frac{1}{|x|^{\mu}}\ast \mathcal{W}^{2_{\mu}^{\ast}}\Big)\mathcal{W}^{2_{\mu}^{\ast}-2}|\rho|\Big].
\end{split}
\end{equation}

Now by letting $\eta$ be either $\mathcal{W}$ or $\frac{\partial \mathcal{W}}{\partial\lambda}$, we see that
\begin{equation}\label{a3}
\begin{split}
&\hspace{4mm}\int_{\mathbb{R}^N}\nabla\rho\nabla \eta=0,\\
\hspace{4mm}&2_\mu^*\int_{\mathbb{R}^N}\Big(\frac{1}{|x|^{\mu}}\ast \big(\mathcal{W}^{2_\mu^*-1}\rho\big)\Big)\mathcal{W}^{2_\mu^*-1}\eta
+(2_\mu^*-1)\int_{\mathbb{R}^N}\Big(\frac{1}{|x|^{\mu}} \ast \mathcal{W}^{2_\mu^*}\Big)\mathcal{W}^{2_\mu^*-2}\eta\rho=0,
\end{split}
\end{equation}
by orthogonality conditions. By testing \eqref{a2} with $\eta\Psi$, we obtain
\begin{equation}\label{a4}
\begin{split}
&\bigg|\int_{\mathbb{R}^N}\bigg[\big(\alpha-\alpha^{2\cdot2_{\mu}^{\ast}-1}\big)\Big(\frac{1}{|x|^{\mu}}\ast \mathcal{W}^{2_{\mu}^{\ast}}\Big)\mathcal{W}^{2_{\mu}^{\ast}-1}-\big(2_{\mu}^*\alpha^{2(2_{\mu^*}-1)}+o(1)\big)\Big(\frac{1}{|x|^{\mu}}\ast \big(\mathcal{W}^{2_{\mu}^{\ast}-1}\mathcal{Z}\big)\Big)\mathcal{W}^{2_{\mu}^{\ast}-1}\\&\hspace{4mm}-\big(\big(2_{\mu}^*-1\big)\alpha^{2(2_{\mu^*}-1)}+o(1)\big)\Big(\frac{1}{|x|^{\mu}}\ast \mathcal{W}^{2_{\mu}^{\ast}}\Big)\mathcal{W}^{2_{\mu}^{\ast}-2}V\bigg]\eta\Psi\bigg|\\& \lesssim
\Big|\int_{\mathbb{R}^N}\Gamma(\rho)\eta\Psi\Big|+\Big|\int_{\mathbb{R}^N}\nabla\rho\nabla(\eta\Psi)\Big|+\bigg|\int_{\mathbb{R}^N}\Big[-\Delta u-\Big(\frac{1}{|x|^{\mu}}\ast |u|^{2_{\mu}^{\ast}}\Big)|u|^{2_{\mu}^{\ast}-2}u\Big]\eta\Psi \bigg|\\&\hspace{4mm}
+\bigg|2_{\mu}^*\int_{\mathbb{R}^N}\Big(\frac{1}{|x|^{\mu}}\ast \big( \mathcal{W}^{2_{\mu}^{\ast}-1}\rho\big)\Big) \mathcal{W}^{2_{\mu}^{\ast}-1}\eta\Psi+(2_{\mu}^*-1)\int_{\mathbb{R}^N}\Big(\frac{1}{|x|^{\mu}}\ast  \mathcal{W}^{2_{\mu}^{\ast}}\Big)\mathcal{W}^{2_{\mu}^{\ast}-2}\rho\eta\Psi\bigg|
\\&\hspace{4mm}+o\Big[\int_{\mathbb{R}^N}\Big(\frac{1}{|x|^{\mu}}\ast \big(\sigma^{2_{\mu}^{\ast}-1}|\rho|\big)\Big)\mathcal{W}^{2_{\mu}^{\ast}-1}|\eta|\Psi\Big]+o\Big[\int_{\mathbb{R}^N}\Big(\frac{1}{|x|^{\mu}}\ast \big(\mathcal{W}^{2_{\mu}^{\ast}-1}|\rho|\big)\Big)\mathcal{W}^{2_{\mu}^{\ast}-1}|\eta|\Psi\Big]\\&\hspace{4mm}+
o\Big[\int_{\mathbb{R}^N}\Big(\frac{1}{|x|^{\mu}}\ast \sigma^{2_{\mu}^{\ast}}\Big)\mathcal{W}^{2_{\mu}^{\ast}-2}|\rho||\eta|\Psi\Big]+o\Big[\int_{\mathbb{R}^N}\Big(\frac{1}{|x|^{\mu}}\ast \mathcal{W}^{2_{\mu}^{\ast}}\Big)\mathcal{W}^{2_{\mu}^{\ast}-2}|\rho||\eta|\Psi\Big].
\end{split}
\end{equation}
Let us estimate each term of the righ hand side of \eqref{a4}.
\begin{equation*}
\Big|\int_{\mathbb{R}^N}\nabla\rho\nabla(\eta\Psi)\Big|=\Big|\int_{\mathbb{R}^N}\nabla\rho\nabla\big(\eta(\Psi-1)\big)\Big|\leq
\big\|\nabla\rho\big\|_{L^2}\big\|\nabla\big(\eta(\Psi-1)\big\|_{L^2},
\end{equation*}
\begin{equation*}
\bigg|\int_{\mathbb{R}^N}\Big[-\Delta u-\Big(\frac{1}{|x|^{\mu}}\ast |u|^{2_{\mu}^{\ast}}\Big)|u|^{2_{\mu}^{\ast}-2}u\Big]\eta\Psi \bigg|\leq\Big\|\Delta u+\Big(\frac{1}{|x|^{\mu}}\ast |u|^{2_{\mu}^{\ast}}\Big)|u|^{2_{\mu}^{\ast}-2}u\Big\|_{(\mathcal{D}^{1,2}(\mathbb{R}^N))^{-1}}\big\|\nabla(\eta\Psi)\big\|_{L^2}.
\end{equation*}
Combining \eqref{a3} with HLS and Sobolev inequalities, we get that
\begin{equation*}
\begin{split}
\bigg|2_{\mu}^*\int_{\mathbb{R}^N}&\Big(\frac{1}{|x|^{\mu}}\ast \big( \mathcal{W}^{2_{\mu}^{\ast}-1}\rho\big)\Big) \mathcal{W}^{2_{\mu}^{\ast}-1}\eta\Psi+(2_{\mu}^*-1)\int_{\mathbb{R}^N}\Big(\frac{1}{|x|^{\mu}}\ast  \mathcal{W}^{2_{\mu}^{\ast}}\Big)\mathcal{W}^{2_{\mu}^{\ast}-2}\rho\eta\Psi\bigg|\\
=&\bigg|2_{\mu}^*\int_{\mathbb{R}^N}\Big(\frac{1}{|x|^{\mu}}\ast \big( \mathcal{W}^{2_{\mu}^{\ast}-1}\rho\big)\Big) \mathcal{W}^{2_{\mu}^{\ast}-1}\eta(\Psi-1)+(2_{\mu}^*-1)\int_{\mathbb{R}^N}\Big(\frac{1}{|x|^{\mu}}\ast  \mathcal{W}^{2_{\mu}^{\ast}}\Big)\mathcal{W}^{2_{\mu}^{\ast}-2}\rho\eta(\Psi-1)\bigg|\\
\lesssim&\big\|\mathcal{W}\big\|_{L^{2^{*}}}^{\frac{N-\mu+2}{N-2}}\|\nabla\rho\|_{L^2}\Big(\int_{\{\Psi<1\}}\big(\mathcal{W}^{2_{\mu}^{*}-1}|\eta|\big)^{\frac{2N}{2N-\mu}}\Big)^{\frac{2N-\mu}{2N}}
+\big\|\mathcal{W}\big\|_{L^{2^{*}}}^{\frac{2N-\mu}{N-2}}\|\nabla\rho\|_{L^2}\Big(\int_{\{\Psi<1\}}\big(\mathcal{W}^{2_{\mu}^{*}-2}|\eta|\big)^{\frac{2N}{N-\mu+2}}\Big)^{\frac{N-\mu+2}{2N}},
\end{split}
\end{equation*}
\begin{equation*}
\begin{split}
\int_{\mathbb{R}^N}\Big(\frac{1}{|x|^{\mu}}\ast \big(\sigma^{2_{\mu}^{\ast}-1}|\rho|\big)\Big)\mathcal{W}^{2_{\mu}^{\ast}-1}|\eta|\Psi\lesssim\big\|\sigma\big\|_{L^{2^{*}}}^{\frac{N-\mu+2}{N-2}}\big\|\mathcal{W}^{2_{\mu}^*-1}\eta\big\|_{L^{\frac{2N}{2N-\mu}}}\|\nabla\rho\|_{L^2},
\end{split}
\end{equation*}
\begin{equation*}
\int_{\mathbb{R}^N}\Big(\frac{1}{|x|^{\mu}}\ast \big(\mathcal{W}^{2_{\mu}^{\ast}-1}|\rho|\big)\Big)\mathcal{W}^{2_{\mu}^{\ast}-1}|\eta|\Psi\lesssim\big\|\mathcal{W}\big\|_{L^{2^{*}}}^{\frac{N-\mu+2}{N-2}}\big\|\mathcal{W}^{2_{\mu}^*-1}\eta\big\|_{L^{\frac{2N}{2N-\mu}}}\|\nabla\rho\|_{L^2},
\end{equation*}
\begin{equation*}
\int_{\mathbb{R}^N}\Big(\frac{1}{|x|^{\mu}}\ast \sigma^{2_{\mu}^{\ast}}\Big)\mathcal{W}^{2_{\mu}^{\ast}-2}|\rho||\eta|\Psi\lesssim\big\|\sigma\big\|_{L^{2^{*}}}^{\frac{2N-\mu}{N-2}}\big\|\mathcal{W}^{2_{\mu}^*-2}\eta\big\|_{L^{\frac{2N}{N-\mu+2}}}\|\nabla\rho\|_{L^2},
\end{equation*}
\begin{equation*}
\int_{\mathbb{R}^N}\Big(\frac{1}{|x|^{\mu}}\ast \mathcal{W}^{2_{\mu}^{\ast}}\Big)\mathcal{W}^{2_{\mu}^{\ast}-2}|\rho||\eta|\Psi\lesssim\big\|\mathcal{W}\big\|_{L^{2^{*}}}^{\frac{2N-\mu}{N-2}}\big\|\mathcal{W}^{2_{\mu}^*-2}\eta\big\|_{L^{\frac{2N}{N-\mu+2}}}\|\nabla\rho\|_{L^2}.
\end{equation*}
It remains to estimate $\Big|\int_{\mathbb{R}^N}\Gamma(\rho)\eta\Psi\Big|$. Then by applying HLS inequality and Sobolev inequality to all terms, we have that
\begin{equation*}
\Big|\Big(\frac{1}{|x|^{\mu}}\ast \rho^{2_{\mu}^{\ast}}\Big)\rho^{2_{\mu}^{\ast}-1}\eta\Psi\Big|\lesssim\big\|\nabla\rho\big\|_{L^2}^{\frac{3N-2\mu+2}{N-2}}\big\|\eta\big\|_{L^{2^*}},
\end{equation*}
\begin{equation*}
\begin{split}
&\bigg|\int_{\mathbb{R}^N}\Big(\frac{1}{|x|^{\mu}}\ast \sigma^{2_{\mu}^{\ast}}\Big)\rho^{2_{\mu}^{\ast}-1}\eta\Psi+\int_{\mathbb{R}^N}\Big(\frac{1}{|x|^{\mu}}\ast \sigma^{2_{\mu}^{\ast}}\Big)\sigma^{2_{\mu}^{\ast}-3}\rho^2\eta\Psi\bigg|\\&\lesssim\big\|\sigma\big\|_{L^{2^*}}^{\frac{2N-\mu}{N-2}}\big\|\nabla\rho\big\|_{L^2}^{\frac{N-\mu+2}{N-2}}\big\|\eta\big\|_{L^{2^*}}
+\big\|\sigma\big\|_{L^{2^*}}^{\frac{N-2\mu+6}{N-2}}\big\|\nabla\rho\big\|_{L^2}^{2}\big\|\eta\big\|_{L^{2^*}},
\end{split}
\end{equation*}

\begin{equation*}
\begin{split}
&\bigg|\int_{\mathbb{R}^N}\Big[\Big(\frac{1}{|x|^{\mu}}\ast \rho^{2_{\mu}^{\ast}}\Big)\sigma^{2_{\mu}^{\ast}-3}\rho^2+\Big(\frac{1}{|x|^{\mu}}\ast \rho^{2_{\mu}^{\ast}}\Big)\sigma^{2_{\mu}^{\ast}-1}+\Big(\frac{1}{|x|^{\mu}}\ast \rho^{2_{\mu}^{\ast}}\Big)\sigma^{2_{\mu}^{\ast}-2}\rho\Big]\eta\Psi\bigg|\\&
\lesssim\big\|\sigma\big\|_{L^{2^{*}}}^{2_{\mu}^*-3}\big\|\nabla\rho\big\|_{L^2}^{2_{\mu}^*+2}\big\|\eta\big\|_{L^{2^*}}
+\big\|\sigma\big\|_{L^{2^{*}}}^{2_{\mu}^*-1}\big\|\nabla\rho\big\|_{L^2}^{2_{\mu}^{*}}\big\|\eta\big\|_{L^{2^*}}
+\big\|\sigma\big\|_{L^{2^{*}}}^{2_{\mu}^*-2}\big\|\nabla\rho\big\|_{L^2}^{2_{\mu}^*+1}\big\|\eta\big\|_{L^{2^*}},
\end{split}
\end{equation*}

\begin{equation*}
\begin{split}
&\bigg|\int_{\mathbb{R}^N}\Big[\Big(\frac{1}{|x|^{\mu}}\ast \big(\sigma^{2_{\mu}^{\ast}-1}\rho\big)\Big)\sigma^{2_{\mu}^{\ast}-2}\rho
+\Big(\frac{1}{|x|^{\mu}}\ast\big(\sigma^{2_{\mu}^{\ast}-1}\rho\big)\Big)\sigma^{2_{\mu}^{\ast}-3}\rho^2+
\Big(\frac{1}{|x|^{\mu}}\ast\big(\sigma^{2_{\mu}^{\ast}-1}\rho\big)\Big)\rho^{2_{\mu}^{\ast}-1}\Big]\eta\Psi\bigg|\\&
\lesssim\big\|\sigma\big\|_{L^{2^*}}^{\frac{N-2\mu+6}{N-2}}\big\|\nabla\rho\big\|_{L^2}^{2}\big\|\eta\big\|_{L^{2^*}}
+\big\|\sigma\big\|_{L^{2^*}}^{\frac{8-2\mu}{N-2}}\big\|\nabla\rho\big\|_{L^2}^{3}\big\|\eta\big\|_{L^{2^*}}
+\big\|\sigma\big\|_{L^{2^*}}^{\frac{N-\mu+2}{N-2}}\big\|\nabla\rho\big\|_{L^2}^{\frac{2N-\mu}{N-2}}\big\|\eta\big\|_{L^{2^*}},
\end{split}
\end{equation*}
and
\begin{equation*}
\begin{split}
&\bigg|\int_{\mathbb{R}^N}\Big[\Big(\frac{1}{|x|^{\mu}}\ast \big(\sigma^{2_{\mu}^{\ast}-2}\rho^2)\Big)\sigma^{2_{\mu}^{\ast}-2}\rho+\Big(\frac{1}{|x|^{\mu}}\ast \big(\sigma^{2_{\mu}^{\ast}-2}\rho^2\big)\Big)\sigma^{2_{\mu}^{\ast}-3}\rho^2+\Big(\frac{1}{|x|^{\mu}}\ast \big(\sigma^{2_{\mu}^{\ast}-2}\rho^2\big)\Big)\rho^{2_{\mu}^{\ast}-1}\Big]\eta\Psi\bigg|\\&
\lesssim\big\|\sigma\big\|_{L^{2^*}}^{2(2_{\mu}^*-2)}\big\|\nabla\rho\big\|_{L^2}^{3}\big\|\eta\big\|_{L^{2^*}}
+\big\|\sigma\big\|_{L^{2^*}}^{2\cdot2_{\mu}^{\ast}-5}\big\|\nabla\rho\big\|_{L^2}^{4}\big\|\eta\big\|_{L^{2^*}}
+\big\|\sigma\big\|_{L^{2^*}}^{2_{\mu}^{\ast}-2}\big\|\nabla\rho\big\|_{L^2}^{2_{\mu}^*+1}\big\|\eta\big\|_{L^{2^*}}.
\end{split}
\end{equation*}
Moreover, by Proposition \ref{fai2} and $|\eta|\lesssim \mathcal{W}$, we get that
\begin{equation*}
\big\|\nabla\big(\eta(\Psi-1)\big\|_{L^2}=o(1),\quad\big\|\nabla(\eta\Psi)\big\|_{L^2}\lesssim1,
\end{equation*}
\begin{equation*}
\big\|\mathcal{W}\big\|_{L^{2^{*}}}^{\frac{N-\mu+2}{N-2}}\Big(\int_{\{\Psi<1\}}\big(\mathcal{W}^{2_{\mu}^{*}-1}|\eta|\big)^{\frac{2N}{2N-\mu}}\Big)^{\frac{2N-\mu}{2N}}=o(1),
\end{equation*}
\begin{equation*}
\big\|\mathcal{W}\big\|_{L^{2^{*}}}^{\frac{2N-\mu}{N-2}}\Big(\int_{\{\Psi<1\}}\big(\mathcal{W}^{2_{\mu}^{*}-2}|\eta|\big)^{\frac{2N}{N-\mu+2}}\Big)^{\frac{N-\mu+2}{2N}}=o(1),
\end{equation*}
\begin{equation*}
\big\|\sigma\big\|_{L^{2^{*}}}^{\frac{N-\mu+2}{N-2}}\big\|\mathcal{W}^{2_{\mu}^*-1}\eta\big\|_{L^{\frac{2N}{2N-\mu}}} \lesssim1,\quad
 \big\|\mathcal{W}\big\|_{L^{2^{*}}}^{\frac{N-\mu+2}{N-2}}\big\|\mathcal{W}^{2_{\mu}^*-1}\eta\big\|_{L^{\frac{2N}{2N-\mu}}}\lesssim1,
\end{equation*}
\begin{equation*}
\big\|\sigma\big\|_{L^{2^{*}}}^{\frac{2N-\mu}{N-2}}\big\|\mathcal{W}^{2_{\mu}^*-2}\eta\big\|_{L^{\frac{2N}{N-\mu+2}}}\lesssim1,
\quad\big\|\mathcal{W}\big\|_{L^{2^{*}}}^{\frac{2N-\mu}{N-2}}\big\|\mathcal{W}^{2_{\mu}^*-2}\eta\big\|_{L^{\frac{2N}{N-\mu+2}}}\lesssim1,
\end{equation*}
 \begin{equation*}
\big\|\eta\big\|_{L^{2^*}}\lesssim1, \quad
\big\|\sigma\big\|_{L^{2^*}}^{\frac{2N-\mu}{N-2}}\big\|\eta\big\|_{L^{2^*}}\lesssim1
,\quad\big\|\sigma\big\|_{L^{2^*}}^{\frac{N-2\mu+6}{N-2}}\big\|\eta\big\|_{L^{2^*}}\lesssim1,
\end{equation*}
 \begin{equation*}
\big\|\sigma\big\|_{L^{2^{*}}}^{\frac{6-N-\mu}{N-2}}\big\|\eta\big\|_{L^{2^*}}
\lesssim1,\quad\big\|\sigma\big\|_{L^{2^{*}}}^{\frac{N-\mu+2}{N-2}}\big\|\eta\big\|_{L^{2^*}}
\lesssim1,\quad\big\|\sigma\big\|_{L^{2^{*}}}^{\frac{4-\mu}{N-2}}\big\|\eta\big\|_{L^{2^*}}\lesssim1,
\end{equation*}
\begin{equation*}
\big\|\sigma\big\|_{L^{2^*}}^{\frac{N-2\mu+6}{N-2}}\big\|\eta\big\|_{L^{2^*}}\lesssim1,\quad
\big\|\sigma\big\|_{L^{2^*}}^{\frac{8-2\mu}{N-2}},\big\|\eta\big\|_{L^{2^*}}\lesssim1
,\quad\big\|\sigma\big\|_{L^{2^*}}^{\frac{N-\mu+2}{N-2}}\big\|\eta\big\|_{L^{2^*}}\lesssim1,
\end{equation*}
and
\begin{equation*}
\big\|\sigma\big\|_{L^{2^*}}^{2(2_{\mu}^*-2)}\big\|\eta\big\|_{L^{2^*}}\lesssim1,
\quad\big\|\sigma\big\|_{L^{2^*}}^{2\cdot2_{\mu}^{\ast}-5}\big\|\eta\big\|_{L^{2^*}}\lesssim1
\quad,\big\|\sigma\big\|_{L^{2^*}}^{2_{\mu}^{\ast}-2}\big\|\eta\big\|_{L^{2^*}}\lesssim1.
\end{equation*}
Therefore, combining \eqref{a4} and the above all estimates, we eventually have
\begin{equation}\label{a5}
\begin{split}
&\Bigg|\int_{\mathbb{R}^N}\bigg[\big(\alpha-\alpha^{2\cdot2_{\mu}^{\ast}-1}\big)\Big(\frac{1}{|x|^{\mu}}\ast \mathcal{W}^{2_{\mu}^{\ast}}\Big)\mathcal{W}^{2_{\mu}^{\ast}-1}-\Big(2_{\mu}^*\alpha^{2(2_{\mu}^\ast-1)}+o(1)\Big)\Big(\frac{1}{|x|^{\mu}}\ast \big(\mathcal{W}^{2_{\mu}^{\ast}-1}\mathcal{Z}\big)\Big)\mathcal{W}^{2_{\mu}^{\ast}-1}\\&\hspace{4mm}-\Big(\big(2_{\mu}^*-1\big)\alpha^{2(2_{\mu}^\ast-1)}+o(1)\Big)\Big(\frac{1}{|x|^{\mu}}\ast \mathcal{W}^{2_{\mu}^{\ast}}\Big)\mathcal{W}^{2_{\mu}^{\ast}-2}\mathcal{Z}\bigg]\eta\Psi\Bigg|\\&
\lesssim o(1)\big\|\nabla\rho\big\|_{L^2}+\Big\|\Delta u+\Big(\frac{1}{|x|^{\mu}}\ast |u|^{2_{\mu}^{\ast}}\Big)|u|^{2_{\mu}^{\ast}-2}u\Big\|_{(\mathcal{D}^{1,2}(\mathbb{R}^N))^{-1}}+\big\|\nabla\rho\big\|_{L^2}^{\min\big(2,\frac{N-\mu+2}{N-2}\big)}.
\end{split}
\end{equation}

Now let us denote $\mathcal{Z}=\mathcal{Z}_1+\mathcal{Z}_2$ where $\mathcal{Z}_1:=\sum\limits_{j<i}\alpha_jW_j$ and $\mathcal{Z}_2:=\sum\limits_{j>i}\alpha_jW_j$.
Then by applying induction we may assume that the statement of the Proposition holds for all $j<i$, we are going to show that  Proposition holds for all $j>i$.

By noting that
\begin{equation*}
\int_{\mathbb{R}^N}\Big(\frac{1}{|x|^{\mu}}\ast W_i^{2_{\mu}^{\ast}}\Big)W_i^{2_{\mu}^{\ast}-1}W_j=\int_{\mathbb{R}^N}\nabla W_i\nabla W_j=\int_{\mathbb{R}^N}\Big(\frac{1}{|x|^{\mu}}\ast W_j^{2_{\mu}^{\ast}}\Big)W_j^{2_{\mu}^{\ast}-1}W_i,
\end{equation*}
then we get that
\begin{equation}\label{ZW1}
\begin{split}
\int_{\mathbb{R}^N}&\Big(\frac{1}{|x|^{\mu}}\ast W^{2_{\mu}^{\ast}}\Big)W^{2_{\mu}^{\ast}-2}\mathcal{Z}_1|\eta|\Psi\lesssim\int_{\mathbb{R}^N}\Big(\frac{1}{|x|^{\mu}}\ast W^{2_{\mu}^{\ast}}\Big)W^{2_{\mu}^{\ast}-1}\mathcal{Z}_1\Psi\\&\lesssim \epsilon\big\|\nabla\rho\big\|_{L^2}+\Big\|\Delta u+\Big(\frac{1}{|x|^{\mu}}\ast |u|^{2_{\mu}^{\ast}}\Big)|u|^{2_{\mu}^{\ast}-2}u\Big\|_{(\mathcal{D}^{1,2}(\mathbb{R}^N))^{-1}}+\big\|\nabla\rho\big\|_{L^2}^{\min\big(2,\frac{N-\mu+2}{N-2}\big)},
\end{split}
\end{equation}
by $|\eta|\leq \mathcal{W}$.

Now we are in a position to prove (\ref{www}) of Proposition \ref{estimate2}. If $\alpha=1$, the proof is completed. So we may assume that
 $\alpha\neq1$.
Let us start with the left-hand side of inequality (\ref{a5}).
Notice that, by Proposition \ref{fai2}-(4), we get that
\begin{equation}\label{a6}
\begin{split}
\int_{\mathbb{R}^N}&\bigg[\Big(\frac{1}{|x|^{\mu}}\ast \mathcal{W}^{2_{\mu}^{\ast}}\Big)\mathcal{W}^{2_{\mu}^{\ast}-1}-\big(1+o(1)\big)\hat{\xi}\Big(\frac{1}{|x|^{\mu}}\ast \mathcal{W}^{2_{\mu}^{\ast}}\Big)\mathcal{W}^{2_{\mu}^{\ast}-2}\bigg]\eta\Psi\\&
=\int_{\mathbb{R}^N}\Big(\frac{1}{|x|^{\mu}}\ast \mathcal{W}^{2_{\mu}^{\ast}}\Big)\mathcal{W}^{2_{\mu}^{\ast}-1}\eta-\hat{\xi}\int_{\mathbb{R}^N}\Big(\frac{1}{|x|^{\mu}}\ast \mathcal{W}^{2_{\mu}^{\ast}}\Big)\mathcal{W}^{2_{\mu}^{\ast}-2}\eta+o(1),
\end{split}
\end{equation}
where
\begin{equation*}
\quad \hat{\xi}:=\frac{(2_{\mu}^*-1)\alpha^{2(2_{\mu}^\ast-1)}\mathcal{Z}_2(0)}{\alpha-\alpha^{2\cdot2_{\mu}^{\ast}-1}}. \end{equation*}
Furthermore, choosing $\eta=W$ and $\eta=\partial_{\lambda}\mathcal{W}$, respectively, we have that
\begin{equation*}
\begin{split}
\frac{\int_{\mathbb{R}^N}\Big(\frac{1}{|x|^{\mu}}\ast \mathcal{W}^{2_{\mu}^{\ast}}\Big)\mathcal{W}^{2_{\mu}^{\ast}}}{\int_{\mathbb{R}^N}\Big(\frac{1}{|x|^{\mu}}\ast \mathcal{W}^{2_{\mu}^{\ast}}\Big)\mathcal{W}^{2_{\mu}^{\ast}-1}}\neq\frac{\int_{\mathbb{R}^N}\Big(\frac{1}{|x|^{\mu}}\ast \mathcal{W}^{2_{\mu}^{\ast}}\Big)\mathcal{W}^{2_{\mu}^{\ast}-1}\partial_{\lambda}\mathcal{W}}{\int_{\mathbb{R}^N}\Big(\frac{1}{|x|^{\mu}}\ast \mathcal{W}^{2_{\mu}^{\ast}}\Big)\mathcal{W}^{2_{\mu}^{\ast}-2}\partial_{\lambda}\mathcal{W}},
\end{split}
\end{equation*}
where we have used the fact that
\begin{equation*}
\int_{\mathbb{R}^N}\Big(\frac{1}{|x|^{\mu}}\ast \mathcal{W}^{2_{\mu}^{\ast}}\Big)\mathcal{W}^{2_{\mu}^{\ast}-1}\partial_{\lambda}\mathcal{W}=\int_{\mathbb{R}^N}\nabla \mathcal{W}\nabla\partial_{\lambda}\mathcal{W}=0.
\end{equation*}
By a straight computation,
\begin{equation*}
\int_{\mathbb{R}^N}\Big(\frac{1}{|x|^{\mu}}\ast \mathcal{W}^{2_{\mu}^{\ast}}\Big)\mathcal{W}^{2_{\mu}^{\ast}-2}\partial_{\lambda}\mathcal{W}
=\frac{\mathcal{\widetilde{Q}}}{2^{\ast}-1}\frac{d}{d\lambda}\Big|_{\lambda=1}\Big(\frac{1}{\lambda^{N-2/2}}\int_{\mathbb{R}^N} \mathcal{W}[0,1]^{2^{\ast}-1}\Big)\neq0.
\end{equation*}
The above arguments tell us that equality (\ref{a6}) cannot be very small and can be maximized by choosing $\eta$.
 Therefore, by the fact that $\mathcal{Z}_2(x)\Psi(x)=(1+o(1))\mathcal{Z}_2(0)\Psi(x)$, Proposition~\ref{fai2}-(2) and (\ref{a5})-(\ref{ZW1}) imply that
\begin{equation}\label{a7}
\begin{split}
&\Big|\alpha-\alpha^{2\cdot2_{\mu}^{\ast}-1}\Big|\Bigg|\int_{\mathbb{R}^N}\bigg[\Big(\frac{1}{|x|^{\mu}}\ast \mathcal{W}^{2_{\mu}^{\ast}}\Big)\mathcal{W}^{2_{\mu}^{\ast}-1}-\big(1+o(1)\big)\hat{\xi}\Big(\frac{1}{|x|^{\mu}}\ast \mathcal{W}^{2_{\mu}^{\ast}}\Big)\mathcal{W}^{2_{\mu}^{\ast}-2}\bigg]\eta\Psi\Bigg|\\&
\leq\Bigg|\int_{\mathbb{R}^N}\bigg[\big(\alpha-\alpha^{2\cdot2_{\mu}^{\ast}-1}\big)\Big(\frac{1}{|x|^{\mu}}\ast \mathcal{W}^{2_{\mu}^{\ast}}\Big)\mathcal{W}^{2_{\mu}^{\ast}-1}-\big(2_{\mu}^*-1\big)\alpha^{2(2_{\mu}^\ast-1)}\Big(\frac{1}{|x|^{\mu}}\ast \mathcal{W}^{2_{\mu}^{\ast}}\Big)\mathcal{W}^{2_{\mu}^{\ast}-2}\mathcal{Z}(x)\bigg]\eta\Psi\Bigg|\\&\hspace{4mm}+\Bigg|\big(2_{\mu}^*-1\big)
\alpha^{2(2_{\mu}^{\ast}-1)}\int_{\mathbb{R}^N}\Big(\frac{1}{|x|^{\mu}}\ast W^{2_{\mu}^{\ast}}\Big)W^{2_{\mu}^{\ast}-2}\mathcal{Z}_1|\eta|\Psi\Bigg|
\\&
\lesssim\Bigg|\int_{\mathbb{R}^N}\bigg[\big(\alpha-\alpha^{2\cdot2_{\mu}^{\ast}-1}\big)\Big(\frac{1}{|x|^{\mu}}\ast \mathcal{W}^{2_{\mu}^{\ast}}\Big)\mathcal{W}^{2_{\mu}^{\ast}-1}-\Big(2_{\mu}^*\alpha^{2(2_{\mu}^\ast-1)}+o(1)\Big)\Big(\frac{1}{|x|^{\mu}}\ast \big(\mathcal{W}^{2_{\mu}^{\ast}-1}\sum_{j\neq i}\alpha_jW_j\big)\Big)\mathcal{W}^{2_{\mu}^{\ast}-1}\\&\hspace{4mm}-\Big(\big(2_{\mu}^*-1\big)\alpha^{2(2_{\mu}^\ast-1)}+o(1)\Big)\Big(\frac{1}{|x|^{\mu}}\ast \mathcal{W}^{2_{\mu}^{\ast}}\Big)\mathcal{W}^{2_{\mu}^{\ast}-2}\mathcal{Z}\bigg]\eta\Psi\Bigg|\\&\hspace{4mm}
+\big(2_{\mu}^*-1\big)
\alpha^{2(2_{\mu}^{\ast}-1)}\int_{\mathbb{R}^N}\Big(\frac{1}{|x|^{\mu}}\ast W^{2_{\mu}^{\ast}}\Big)W^{2_{\mu}^{\ast}-2}\mathcal{Z}_1|\eta|\Psi\\&
\lesssim o(1)\big\|\nabla\rho\big\|_{L^2}+\Big\|\Delta u+\Big(\frac{1}{|x|^{\mu}}\ast |u|^{2_{\mu}^{\ast}}\Big)|u|^{2_{\mu}^{\ast}-2}u\Big\|_{(\mathcal{D}^{1,2}(\mathbb{R}^N))^{-1}}+\big\|\nabla\rho\big\|_{L^2}^{\min\big(2,\frac{N-\mu+2}{N-2}\big)}.
\end{split}
\end{equation}
Therefore the first conclusion of Proposition \ref{estimate2} follows from \eqref{a6} and \eqref{a7}.

Now we are in a position to complete the proof of the second conclusion of Proposition \ref{estimate2}. Using Lemma~\ref{fai2}-(2) and choosing $\epsilon=o(1)$, we get by taking $\eta=\mathcal{W}$ in \eqref{a5}, for $j\neq i$,
\begin{equation}\label{a9}
\begin{split}
&\Bigg|\big(\alpha-\alpha^{2\cdot2_{\mu}^{\ast}-1}\big)\int_{\mathbb{R}^N}\Big(\frac{1}{|x|^{\mu}}\ast \mathcal{W}^{2_{\mu}^{\ast}}\Big)\mathcal{W}^{2_{\mu}^{\ast}}\Psi-o(1)\int_{\mathbb{R}^N}\Big(\frac{1}{|x|^{\mu}}\ast \mathcal{W}^{2_{\mu}^{\ast}}\Big)\mathcal{W}^{2_{\mu}^{\ast}}\Psi
\\&\hspace{4mm}-\Big(\big(2_{\mu}^*-1\big)\alpha^{2(2_{\mu}^\ast-1)}+o(1)\Big)\int_{\mathbb{R}^N}\Big(\frac{1}{|x|^{\mu}}\ast \mathcal{W}^{2_{\mu}^{\ast}}\Big)\mathcal{W}^{2_{\mu}^{\ast}-1}\mathcal{Z}\Psi\Bigg|\\&
\lesssim o(1)\big\|\nabla\rho\big\|_{L^2}+\Big\|\Delta u+\Big(\frac{1}{|x|^{\mu}}\ast |u|^{2_{\mu}^{\ast}}\Big)|u|^{2_{\mu}^{\ast}-2}u\Big\|_{(\mathcal{D}^{1,2}(\mathbb{R}^N))^{-1}}+\big\|\nabla\rho\big\|_{L^2}^{\min\big(2,\frac{N-\mu+2}{N-2}\big)}.
\end{split}
\end{equation}
Then from the first conclusion of Proposition \ref{estimate2} and \eqref{a9} imply that, for $j\neq i$,
\begin{equation}\label{wzhao}
\begin{split}
&\Bigg|\int_{\mathbb{R}^N}\Big(\frac{1}{|x|^{\mu}}\ast \mathcal{W}^{2_{\mu}^{\ast}}\Big)\mathcal{W}^{2_{\mu}^{\ast}-1}\mathcal{Z}\Psi\Bigg|\\&
\lesssim o(1)\big\|\nabla\rho\big\|_{L^2}+\Big\|\Delta u+\Big(\frac{1}{|x|^{\mu}}\ast |u|^{2_{\mu}^{\ast}}\Big)|u|^{2_{\mu}^{\ast}-2}u\Big\|_{(\mathcal{D}^{1,2}(\mathbb{R}^N))^{-1}}+\big\|\nabla\rho\big\|_{L^2}^{\min\big(2,\frac{N-\mu+2}{N-2}\big)}.
\end{split}
\end{equation}
Combining the estimate from \cite{FG20},
$$
\int_{\mathbb{R}^N}\mathcal{W}_i^{2^{\ast}-1}\mathcal{W}_j=\int_{\mathbb{R}^N}\mathcal{W}_j^{2^{\ast}-1}\mathcal{W}_i\approx\int_{B(0,1)}\mathcal{W}_i^{2^{\ast}-1}\mathcal{W}_j\quad\mbox{for any}\quad j\neq i,
$$
then we get that
\begin{equation*}
\begin{split}
\int_{\mathbb{R}^N}\Big(\frac{1}{|x|^{\mu}}\ast\mathcal{W}_i^{2_{\mu}^{\ast}}\Big)\mathcal{W}_i^{2_{\mu}^{\ast}-1}\mathcal{W}_j&=\int_{\mathbb{R}^N}\Big(\frac{1}{|x|^{\mu}}\ast\mathcal{W}_j^{2_{\mu}^{\ast}}\Big)\mathcal{W}_j^{2_{\mu}^{\ast}-1}\mathcal{W}_i\\
&\approx\int_{B(0,1)}\Big(\frac{1}{|x|^{\mu}}\ast\mathcal{W}_i^{2_{\mu}^{\ast}}\Big)\mathcal{W}_i^{2^{\ast}-1}\mathcal{W}_j\\&
\lesssim o(1)\big\|\nabla\rho\big\|_{L^2}+\Big\|\Delta u+\Big(\frac{1}{|x|^{\mu}}\ast |u|^{2_{\mu}^{\ast}}\Big)|u|^{2_{\mu}^{\ast}-2}u\Big\|_{(\mathcal{D}^{1,2}(\mathbb{R}^N))^{-1}}+\big\|\nabla\rho\big\|_{L^2}^{\min\big(2,\frac{N-\mu+2}{N-2}\big)},
\end{split}
\end{equation*}
for any $j\neq i$
and imply the second conclusion of Proposition \ref{estimate2} holds for all $j>i$. Hence the conclusion can be deduced by the induction.

  \qed


    \end{document}